\newtheorem{theorem}{Theorem}
\newtheorem{corollary}[theorem]{Corollary}
\newtheorem{lemma}[theorem]{Lemma}
\newtheorem{proposition}[theorem]{Proposition}
\newtheorem{remark}[theorem]{Remark}
\newenvironment{proof}[1][Proof]{\textbf{#1.} }{\ \rule{0.5em}{0.5em}}
\renewcommand{\geq}{\geqslant}
\def\1{{\mathbf{1}}}
\def\1{{\mathbf{1}}}
\def\0.5{{\frac{1}{2}}}
\begin{document}

\title{\textbf{Asymptotics of Yule's nonsense correlation for
Ornstein-Uhlenbeck paths: The correlated case.}}
\author{Soukaina Douissi \thanks{%
Cadi Ayyad University, UCA, National School of Applied Sciences of Marrakech
(ENSAM), BP 575, Avenue Abdelkrim Khattabi, 40000, Guéliz, Marrakech,
Morocco. Email:\texttt{s.douissi@uca.ac.ma}}, Philip Ernst\thanks{%
Imperial College London, Department of Mathematics. E-mail: \texttt{%
p.ernst@imperial.ac.uk}}, Frederi Viens \thanks{%
Department of Statistics, Rice University, USA. E-mail: \texttt{%
viens@rice.edu}}}
\date{\today }
\maketitle

\begin{abstract}
We study the continuous-time version of the empirical correlation
coefficient between the paths of two possibly correlated Ornstein-Uhlenbeck
processes, known as Yule's nonsense correlation for these paths. Using sharp
tools from the analysis on Wiener chaos, we establish the asymptotic
normality of the fluctuations of this correlation coefficient around its
long-time limit, which is the mathematical correlation coefficient between
the two processes. This asymptotic normality is quantified in Kolmogorov
distance, which allows us to establish speeds of convergence in the Type-II
error for two simple tests of independence of the paths, based on the
empirical correlation, and based on its numerator. An application to
independence of two observations of solutions to the stochastic heat
equation is given, with excellent asymptotic power properties using merely a
small number of the solutions' Fourier modes.
\end{abstract}

\section{Introduction and Setup}

The first purpose of this paper is to provide a detailed asymptotic study of
the empirical correlation coefficient $\rho \left( T\right) $ between two
standard Ornstein-Uhlenbeck (OU) processes $X_{1}$ and $X_{2}$ on a time
interval $[0,T]$, as the time horizon $T$ increases to infinity, where $\rho
\left( T\right) $ is defined below in (\ref{rho}). The OU paths $X_{1}$ and $%
X_{2}$ may or may not be correlated. This paper's second purpose is to use
those asymptotics to evaluate the power of independence tests based on $\rho
\left( T\right) $ itself as a test statistic, or on its constituent
components as test statistics. It is important to note from the outset that
the data available to compute these test statistics are the single pair of
paths $(X_{1},X_{2})$, not on repeated measurements of $X_{1}$ and/or $X_{2}$%
. This is why we chose to investigate increasing-horizon (large time)
asymptotics. This framework is well adapted to longitudinal obervational
studies with high-frequency observations, as can occur commonly in
environmental data, financial data, and many other areas where it is
inconvenient or impossible to work with highly repeatable designed
experiments.

The notion of empirical correlation coefficient $\rho \left( T\right) $ for
any pair of paths of continuous stochastic processes $(X_{1},X_{2})$ defined
on $[0,T]$ can be defined by analogy with the standard Pearson correlation
coefficient for these same paths observed in discrete time, e.g. at regular
time intervals. Because the paths are continuous, it is a trivial
application of standard Riemann integration that the standard Pearson
correlation coefficient for the discrete-time observations of $(X_{1},X_{2})$
converges, as the time step converges to 0, to the following continuous-time
statistic 
\begin{equation}
\rho (T):=\frac{Y_{12}(T)}{\sqrt{Y_{11}(T)}\sqrt{Y_{22}(T)}},  \label{rho}
\end{equation}%
where the random variables $Y_{ij}(T)$, $i,j=1,2$ are given via the
following Riemann integrals%
\begin{equation}
Y_{ij}(T):=\int_{0}^{T}X_{i}(u)X_{j}(u)du-T\bar{X}_{i}\left( T\right) \bar{%
X_{j}}\left( T\right) ,\text{ \ }\text{ \ }\bar{X}_{i}\left( T\right) :=%
\frac{1}{T}\int_{0}^{T}X_{i}(u)du.  \label{Y}
\end{equation}%
This classical analysis statement holds almost surely, as soon as the paths $%
(X_{1},X_{2})$ are continuous almost surely and are not constant over time.
That the denominators in (\ref{rho}) are non-zero comes an application of
Jensen's inequality where equality does not hold because the paths are not
constant. All other details, including the definition of $\rho $ in discrete
time, are omitted, since many references including several cited below, such
as \cite{ESW, DVE}, cover this topic.

The topic of independence testing for continuous-time modeled stochastic
processes, as a mathematical framework approximating time series observed
over long horizons or in high frequency, using $\rho \left( T\right) $ as
above, has gained renewed attention since Ernst, Shepp, and Wyner solved a
long-standing mathematical conjecture regarding the exact quantitative
behavior of the continuous-time version of Pearson's correlation coefficient
for independent random walks. Their paper \cite{ESW} discusses the history
of how $\rho \left( T\right) $ relates to the discrete-time classical
Pearson correlation coefficient. This paper can be consulted, along with its
references, for why $\rho \left( T\right) $ is the correct object of study,
as we claim above. In the case of random walks, their paper explains, as was
known since the 1960s, that the pair of processes $(X_{1},X_{2})$ should be
Brownian motions (Wiener processes), and their paper proves that when these
paths are independent, neither $\rho \left( T\right) $ nor its discrete
version converge to $0$, as one would expect for standard interpretations of
Pearson correlation coefficients. Rather, $\rho \left( T\right) $ is
constant in distribution, with a variance which they compute explicitly.
This is the so-called phenomenon of \textquotedblleft Yule's nonsense
correlation\textquotedblright , and indeed this appellation is a label for $%
\rho \left( T\right) $ itself. It was named after G. Udny Yule who
discovered this phenomenon empirically in 1926 in \cite{Yule}, and who had
conjectured that the variance of $\rho \left( T\right) $ should be
computable. The fact that, for random walks, as for other self-similar
processes such as fractional Brownian motions, $\rho \left( T\right) $ has a
stationary distribution as $T$ increases, was presumably well known by the
scholars who had studied Yule's nonsense correlation since the 1960s, and
most likely by Yule himself for the case of standard random walks. This fact
was not recorded in the literature until it was pointed out in the
introduction of \cite{DVE} when discussing the distinction in the behavior
of $\rho \left( T\right) $ between highly non-stationary paths like random
walks and Brownian motion on one hand, and i.i.d. data and stationary time
series and processes like OU on the other.

This brings us to the core of this paper's topic. It has been known for some
time that, when a pair of paths of times series is sufficiently stationary
(with some limits on how long their memory (auto-correlation) is), the
phenomenon of Yule's nonsense correlation does not hold: the Pearson
correlation of the pair of time series paths typically converges to their
underlying mathematical correlation, just as one would expect for i.i.d
data. This was established in continuous time for OU processes in the paper 
\cite{ERZ}: if $\left( X_{1},X_{2}\right) $ are two OU paths with
correlation $r$, then $\rho \left( T\right) \rightarrow r$ almost surely
i.e. as $T\rightarrow \infty $, and the fluctuations in this convergence are
Gaussian, i.e. a Central Limit Theorem (CLT) holds for $\sqrt{T}(\rho \left(
T\right) -r)$ as $T\rightarrow \infty $. The paper \cite{ERZ} was published
in 2025, but an arXiv version with this result in the case $r=0$ was posted
in 2022 as \cite{ERZ2}, which predates the publication of the paper \cite%
{DVE}. In the paper \cite{DVE}, the case of $r=0$ was studied in detail, and
a speed of convergence in this CLT was established, at the so-called
Berry-Esséen rate $1/\sqrt{T}$, using tools from the so-called Wiener chaos
analysis. That paper also studied the discrete high-frequency version of $%
\rho \left( T\right) $ and established a rate of convergence of its normal
fluctuations which depends on $T$ and on the rate of observations. A study
of moderate deviations for $\rho \left( T\right) $ is given in the 2025
paper \cite{ZJW}, where the OU processes are observed at discrete time and
in high frequency, similarly to the dicrete observation restrictions placed
on the OU processes in the earlier contribution \cite{DVE}. This leaves the
case of correlated paths ($r\neq 0$) in continuous time open. That question
was taken up in fully dicrete time in the preprint \cite{EH} in the context
of AR(1) processes, i.e. without simultaneous restrictions on high-frequency
observations and increasing horizon. They establish an exact distribution
theory, and they study asymptotics of the discrete-time version of the
empirical correlation, quantitatively, using basic estimates from the
Malliavin calculus, similar to the tools developed in \cite{DVE}. Since that
empirical correlation converges to the underlying mathematical correlation $r
$, the paper \cite{EH}'s distribution theory is used to prove a Berry-Essé%
en-type theorem in Kolmogorov distance for the Gaussian fluctuations of the
discrete empirical correlation. This immediately allows \cite{EH} to prove
that a simple test of independence is asymptotically powerful, similar to
what we do in the present article.

The current article picks up the framework in \cite{DVE}, in continuous
time, now allowing $(X_{1},X_{2})$ to be correlated, and taking the analysis
of independence testing further. That is also the topic of the preprint \cite%
{EH}, in discrete time, as just mentioned. This current paper compares with
the fully discrete-time setting of \cite{EH} in the following ways.
Superficially, both papers use estimates of distances between probability
measures on Wiener chaos which can be found in the work of Nourdin and
Peccati, though the current paper relies on the optimal version of these
estimates in \cite{NP2015}, while \cite{EH} works with the possibly
suboptimal estimates in the earlier research, summarized in the book \cite%
{NP-book}. The extraordinarily detailed exact-distribution theory
calcultions performed in \cite{EH} are helpful to achieving what appear to
be sharp estimates via the tools in \cite{NP-book}, which is why there does
not appear to be any downside to using those less optimal methods,
circumventing the need to perform third-cumulant calculations. In contrast,
in the current paper, as seen for example in the proof of Proposition \ref%
{cumulants} below, third- and fourth-cumulant calculations are needed to
apply the Optimal Fourth Moment theorem in \cite{NP2015}. The advantage of
using this theorem is a guarantee of optimality assuming efficient cumulant
estimations; another advantage is the avoidance of any exact distribution
theory, which significantly lightens the technicalities needed to establish
probability measure distance estimates on Wiener chaos. Another major
difference between \cite{EH} and the current paper is that the latter is in
continuous time and the former is in discrete time; this is perhaps a
superficial distinction in terms of results, since both papers concentrate
on increasing-horizon asymptotics. However, in terms of proofs, whether the
method of exact distribution theory can be applied to the continuous-time
framework is an open question. The answer could be affirmative, but it is
unclear whether the necessary technicalities are worth the effort. One could
be particularly averse to engaging in the required spectral analysis, given
how much effort and talent was expended in \cite{EH} to handle the
finite-dimensional matrix analysis needed there. In terms of applications to
testing, the current paper engages in a detailed quantitative power
analysis, proposing two different tests depending on whether one uses the
full empirical correlation coefficient, or only the covariance in its
numerator; \cite{EH} applies its Berry-Esséen result to the empirical
correlation for the power calculation, in an efficient way.

Specifically, in the remainder of this paper, $(X_{1},X_{2})$ are a pair of
two OU processes with the same known drift parameter $\theta >0$, namely $%
X_{i}$ solves the linear SDE, for $i=1,2$ 
\begin{equation}
dX_{i}(t)=-\theta X_{i}(t)dt+dW^{i}(t),\text{ \ }t\geq 0  \label{OU}
\end{equation}%
where we assume $X_{i}(0)=0$, $i=1,2$ for the sake of reducing
technicalities, where the driving noises $(W^{1}(t))_{t\geq 0}$, $%
(W^{2}(t))_{t\geq 0}$ are two standard Brownian motions (Wiener processes).
As mentioned, this paper builds a statistical test of independence (or
dependence) of the pair of OU processes $(X_{1},X_{2})$ using $\rho \left(
T\right) $ for large $T$. That is, we propose a test for the following null
hypothesis

\begin{center}
$H_0 :$ $(X_1)$ and $(X_2)$ are independent.

Versus the Alternative Hypothesis

$H_{a}:$ $(X_{1})$ and $(X_{2})$ are correlated with some fixed $%
r=cor(W_{1},W_{2})\in \lbrack -1,1]\backslash \{0\}.$
\end{center}

The reader may note that this is a simple hypothesis test, in the sense that
the alternative hypothesis is specific to a fixed value $r\neq 0.$ Because
of the infinite-dimensional nature of the objects of study, we believe that
a more general hypothesis test, such as a full two-sided test where the
alternative covers all non-zero values of $r$, would not be asymptotically
powerful. For this reason, we do not consider such broader alternatives.

As mentioned, under $H_{0}$, by exploiting the second-Wiener-chaos
properties of the three random variables $%
(Y_{i,j}(T),(i,j)=(1,1),(2,2),(1,2))$ appearing as components of the ratio $%
\rho (T)$ in (\ref{rho}), the paper \cite{DVE} shows, using the connection
between the Malliavin Calculus and Stein's method, that the speed of
convergence in law of $T^{1/2}\rho (T)$ to the normal law $\mathcal{N}%
(0,1/\theta )$ in the Kolmogorov distance $d_{Kol}$ is bounded above by a
log-corrected Berry-Ess en rate $T^{-1/2}\log (T)$.

Therefore, as a first step in looking for an asymptotically powerful test to
reject the null hyothesis of independence, we will study the Gaussian
fluctuations for the statistic $\rho (T)$ under $H_{a}$. This is the topic
of Section \ref{FLUCTU}. We follow that with Section \ref{TESTING} where we
identify an asymptotically powerful test for rejecting the null. Finally,
section \ref{SPDE} provides an interesting example of what Section \ref%
{TESTING} implies in the case of stochastic differential equations in
infinite dimensions, namely how to build a test of independence for
solutions of the stochastic heat equation. But first, in Section \ref{Wiener}%
, we begin with some preliminary information on analysis on Wiener space, to
help make this paper essentially self-contained beyond the construction of
basic objects like the Wiener process.

\section{Elements of the analysis on Wiener space\label{Wiener}}

This section provides essential facts from basic probability, the Malliavin
calculus, and more broadly the analysis on Wiener space. These facts and
their corresponding notations underlie all the results of this paper. This
is because, as mentioned in the introduction, and as noted in the paper \cite%
{DVE}, the three constituent components of $\rho (T)$ involve random
variables in the so-called second Wiener chaos. We have strived to make this
section self contained and logically articulated, presenting material needed
to understand all technical details in this paper, and elements that help
appreciate how these background results fit together as part of the analysis
on Wiener space.

Of particular importance below, when performing exact calculations on these
variables, are the isometry and product formula on Wiener chaos. Another
important property of Wiener chaos explained below and used in this paper is
the so-called hypercontractivity, or equivalence of norms on Wiener chaos.
The crux of the quantitative arguments we make in this paper, to estimate
the rate of normal fluctuations for $\rho (T)$ and its components, come from
the so-called optimal fourth moment theorem on Wiener chaos, also explained
in detail below. It is the precision afforded by that theorem that allows us
to produce tests of independence with good, quantitative properties of
asymptotic power. That theorem, as explained below, supercedes a previous
theorem known as the fourth moment theorem, which we also present below,
along with related results about the connection between Stein's method and
Malliavin derivatives, to give the full context of how all these techniques
fit together. Strictly speaking, the original fourth moment theorem, and the
connection between Malliavin derivatives and Stein's method, are not used
directly in the current paper, but we include them in this section's
didactic overview because we believe omitting them would not be helpful to
readers who have some familiarity with some of the tools but not others. The
interested reader can find more details about the results in this section by
consulting the books \cite[Chapter 1]{nualart-book} and \cite[Chapter 2]%
{NP-book}. However, the details of the optimal fourth moment theorem should
be consulted in the original article \cite{NP2015}.

With $\left( \Omega ,\mathcal{F},\mathbf{P}\right) $ denoting the Wiener
space of a standard Wiener process $W$, for a deterministic function $h\in
L^{2}\left( \mathbf{R}_{+}\right) =:{{\mathcal{H}}}$, the Wiener integral $%
\int_{\mathbf{R}_{+}}h\left( s\right) drW\left( s\right) $ is also denoted
by $W\left( h\right) $. The inner product $\int_{\mathbf{R}_{+}}f\left(
s\right) g\left( s\right) ds$ will be denoted by $\left\langle
f,g\right\rangle _{{\mathcal{H}}}$.

\begin{itemize}
\item \textbf{The Wiener chaos expansion}. For every $q\geq 1$, ${\mathcal{H}
}_{q}$ denotes the $q$th Wiener chaos of $W$, defined as the closed linear
subspace of $L^{2}(\Omega )$ generated by the random variables $%
\{H_{q}(W(h)),h\in {{\mathcal{H}}},\Vert h\Vert _{{\mathcal{H}}}=1\}$ where $%
H_{q}$ is the $q$th Hermite polynomial. Wiener chaos of different orders are
orthogonal in $L^{2}\left( \Omega \right) $. The so-called Wiener chaos
expansion is the fact that any $X\in L^{2}\left( \Omega \right) $ can be
written as 
\begin{equation}
X=\mathbf{E}X+\sum_{q=0}^{\infty }X_{q}  \label{WienerChaos}
\end{equation}
for some $X_{q}\in {\mathcal{H}}_{q}$ for every $q\geq 1$. This is
summarized in the direct-orthogonal-sum notation $L^{2}\left( \Omega \right)
=\oplus _{q=0}^{\infty }{\mathcal{H}}_{q}$. Here ${\mathcal{H}}_{0}$ denotes
the constants.

\item \textbf{Relation with Hermite polynomials. Multiple Wiener integrals}.
The mapping ${I_{q}(h^{\otimes q}):}=q!H_{q}(W(h))$ is a linear isometry
between the symmetric tensor product ${\mathcal{H}}^{\odot q}$ space of
functions on $\left( \mathbf{R}_{+}\right) ^{q}$ (equipped with the modified
norm $\Vert .\Vert _{{\mathcal{H}}^{\odot q}}=\sqrt{q!}\Vert .\Vert _{{%
\mathcal{H}}^{\otimes q}}$) and the $q$th Wiener chaos space ${\mathcal{H}}%
_{q}$ . To relate this to standard stochastic calculus, one first notes that 
${\ I_{q}(h^{\otimes q})}$ can be interpreted as the multiple Wiener
integral of ${h^{\otimes q}}$ w.r.t. $W$. By this we mean that the
Riemann-Stieltjes approximation of such an integral converges in $%
L^{2}\left( \Omega \right) $ to ${I_{q}(h^{\otimes q})}$. This is an
elementary fact from analysis on Wiener space, which can also be proved
using standard stochastic calculus for square-integrable martingales,
because the multiple integral interpretation of ${I_{q}(h^{\otimes q})}$ as
a Riemann-Stieltjes integral over $\left( \mathbf{R}_{+}\right) ^{q}$ can be
further shown to coincide with $q!$ times the iterated It integral over the
first simplex in $\left( \mathbf{R}_{+}\right) ^{q}$.

More generally, for $X$ and its Wiener chaos expansion (\ref{WienerChaos})
above, each term $X_{q}$ can be interpreted as a multiple Wiener integral $%
I_{q}\left( f_{q}\right) $ for some $f_{q}\in {\mathcal{H}}^{\odot q}$.

\item \textbf{The product formula - Isometry property}. For every $f,g\in {{%
\ \mathcal{H}}}^{\odot q}$ the following extended isometry property holds 
\begin{equation}
E\left( I_{q}(f)I_{q}(g)\right) =q!\langle f,g\rangle _{{\mathcal{H}}%
^{\otimes q}}.  \label{isometry}
\end{equation}%
Similarly as for ${I_{q}(h^{\otimes q})}$, this formula is established using
basic analysis on Wiener space, but it can also be proved using standard
stochastic calculus, owing to the coincidence of $I_{q}(f)$ and $I_{q}(g)$
with iterated It integrals. To do so, one uses It 's version of integration
by parts, in which iterated calculations show coincidence of the expectation
of the bounded variation term with the right-hand side above. What is
typically referred to as the Product Formula on Wiener space is the version
of the above formula before taking expectations (see \cite[Section 2.7.3]%
{NP-book}). In our work, beyond the zero-order term in that formula, which
coincides with the expectation above, we will only need to know the full
formula for $q=1$, which is: 
\begin{equation}
I_{1}(f)I_{1}(g)=2^{-1}I_{2}\left( f\otimes g+g\otimes f\right) +\langle
f,g\rangle _{{\mathcal{H}}}.  \label{product-formula}
\end{equation}

\item \textbf{Hypercontractivity in Wiener chaos}. For $h\in {\mathcal{H}}%
^{\otimes q}$, the multiple Wiener integrals $I_{q}(h)$, which exhaust the
set ${\mathcal{H}}_{q}$, satisfy a hypercontractivity property (equivalence
in ${\mathcal{H}}_{q}$ of all $L^{p}$ norms for all $p\geq 2$), which
implies that for any $F\in \oplus _{l=1}^{q}{\mathcal{H}}_{l}$ (i.e. in a
fixed sum of Wiener chaoses), we have 
\begin{equation}
\left( E\big[|F|^{p}\big]\right) ^{1/p}\leqslant c_{p,q}\left( E\big[|F|^{2}%
\big]\right) ^{1/2}\ \mbox{ for any }p\geq 2.  \label{hypercontractivity}
\end{equation}%
It should be noted that the constants $c_{p,q}$ above are known with some
precision when $F$ is a single chaos term: indeed, by Corollary 2.8.14 in 
\cite{NP-book}, $c_{p,q}=\left( p-1\right) ^{q/2}$.

\item \textbf{Malliavin derivative}. The Malliavin derivative operator $D$
on Wiener space is not needed explicitly in this paper. However, because of
the fundamental role $D$ plays in evaluating distances between random
variables, it is helpful to introduce it, to justify the estimates (\ref%
{NPobschaos}) and (\ref{optimal berry esseen}) below. For any univariate
function $\Phi \in C^{1}\left( \mathbf{R}\right) $ with bounded derivative,
and any $h\in {\ \mathcal{H}}$, the Malliavin derivative of the random
variable $X:=\Phi \left( W\left( h\right) \right) $ is defined to be
consistent with the following chain rule: 
\begin{equation*}
DX:X\mapsto D_{r}X:=\Phi ^{\prime }\left( W\left( h\right) \right) h\left(
r\right) \in L^{2}\left( \Omega \times \mathbf{R}_{+}\right) .
\end{equation*}%
A similar chain rule holds for multivariate $\Phi $. One then extends $D$ to
the so-called Gross-Sobolev subset $\mathbf{D}^{1,2}\varsubsetneqq
L^{2}\left( \Omega \right) $ by closing $D$ inside $L^{2}\left( \Omega
\right) $ under the norm defined by its square%
\begin{equation*}
\left\Vert X\right\Vert _{1,2}^{2}:=\mathbf{E}\left[ X^{2}\right] +\mathbf{E}%
\left[ \int_{\mathbf{R}_{+}}\left\vert D_{r}X\right\vert ^{2}dr\right] .
\end{equation*}%
All Wiener chaos random variable are in the domain $\mathbf{D}^{1,2}$ of $D$%
. In fact this domain can be expressed explicitly for any $X$ as in (\ref%
{WienerChaos}): $X\in \mathbf{D}^{1,2}$ if and only if $\sum_{q}qq!\Vert
f_{q}\Vert _{{\mathcal{H}}^{\otimes q}}^{2}<\infty $.

\item \textbf{Generator }$L$ \textbf{of the Ornstein-Uhlenbeck semigroup}.
The linear operator $L$ is defined as being diagonal under the Wiener chaos
expansion of $L^{2}\left( \Omega \right) $: ${\mathcal{H}}_{q}$ is the
eigenspace of $L$ with eigenvalue $-q$, i.e. for any $X\in {\mathcal{H}}_{q}$
, $LX=-qX$. We have $Ker$($L)=$ ${\mathcal{H}}_{0}$, the constants. The
operator $-L^{-1}$ is the negative pseudo-inverse of $L$, so that for any $%
X\in {\mathcal{H}}_{q}$, $-L^{-1}X=q^{-1}X$. Since the variables we will be
dealing with in this article are finite sums of elements of ${\mathcal{H}}%
_{q}$, the operator $-L^{-1}$ is easy to manipulate thereon. The use of $L$
is crucial when evaluating the total variation distance between the laws of
random variables in Wiener chaos, as we will see shortly.

\item \textbf{Distances between random variables}. The following is
classical. If $X,Y$ are two real-valued random variables, then the total
variation distance between the law of $X$ and the law of $Y$ is given by 
\begin{equation*}
d_{TV}\left( X,Y\right) :=\sup_{A\in \mathcal{B}({\mathbb{R}})}\left\vert P%
\left[ X\in A\right] -P\left[ Y\in A\right] \right\vert
\end{equation*}%
where the supremum is over all Borel sets. The Kolmogorov distance $%
d_{Kol}\left( X,Y\right) $ is the same as $d_{TV}$ except one take the sup
over $A$ of the form $(-\infty ,z]$ for all real $z$. The Wasserstein
distance uses Lipschitz rather than indicator functions: 
\begin{equation*}
d_{W}\left( X,Y\right) :=\sup_{f\in Lip(1)}\left\vert Ef(X)-Ef(Y)\right\vert
,
\end{equation*}%
$Lip(1)$ being the set of all Lipschitz functions with Lipschitz constant $%
\leqslant 1$.

\item \textbf{Malliavin operators and distances between laws on Wiener space}%
. There are two key estimates linking total variation distance and the
Malliavin calculus, which were both obtained by Nourdin and Peccati. The
first one is an observation relating an integration-by-parts formula on
Wiener space with a classical result of Ch. Stein. The second is a
quantitatively sharp version of the famous 4th moment theorem of Nualart and
Peccati. Let $N$ denote the standard normal law.

\begin{itemize}
\item \textbf{The observation of Nourdin and Peccati}. Let $X\in \mathbf{D}%
^{1,2}$ with $\mathbf{E}\left[ X\right] =0$ and $Var\left[ X\right] =1$.
Then (see \cite[Proposition 2.4]{NP2015}, or \cite[Theorem 5.1.3]{NP-book}),
for $f\in C_{b}^{1}\left( \mathbf{R}\right) $, 
\begin{equation*}
E\left[ Xf\left( X\right) \right] =E\left[ f^{\prime }\left( X\right)
\left\langle DX,-DL^{-1}X\right\rangle _{\mathcal{H}}\right]
\end{equation*}%
and by combining this with properties of solutions of Stein's equations, one
gets 
\begin{equation}
d_{TV}\left( X,N\right) \leqslant 2E\left\vert 1-\left\langle
DX,-DL^{-1}X\right\rangle _{\mathcal{H}}\right\vert .  \label{NPobs}
\end{equation}%
One notes in particular that when $X\in {\mathcal{H}}_{q}$, since $%
-L^{-1}X=q^{-1}X$, we obtain conveniently 
\begin{equation}
d_{TV}\left( X,N\right) \leqslant 2E\left\vert 1-q^{-1}\left\Vert
DX\right\Vert _{\mathcal{H}}^{2}\right\vert .  \label{NPobschaos}
\end{equation}%
It is this last observation which leads to a quantitative version of the 
\emph{fourth moment theorem} of Nualart and Peccati, which entails using
Jensen's inequality to note that the right-hand side of (\ref{NPobs}) is
bounded above by the variance of $\left\langle DX,-DL^{-1}X\right\rangle _{%
\mathcal{H}}$, and then relating that variance in the case of Wiener chaos
with the 4th cumulant (centered fourth moment) of $X$. However, this
strategy was superseded by the following, which has roots in \cite{BBNP}.

\item \textbf{The optimal fourth moment theorem}. For each integer $n$, let $%
X_{n}\in {\mathcal{H}}_{q}$. Assume $Var\left[ X_{n}\right] =1$ and $\left(
X_{n}\right) _{n}$ converges in distribution to a normal law. It is known
(original proof in \cite{NP}, known as the fourth moment theorem) that this
convergence is equivalent to $\lim_{n}\mathbf{E}\left[ X_{n}^{4}\right] =3$.
The following optimal estimate for $d_{TV}\left( X,N\right) $, known as the
optimal fourth moment theorem, was proved in \cite{NP2015}: with the
sequence $X$ as above, assuming convergence, there exist two constants $%
c,C>0 $ depending only on the law of $X$ but not on $n$, such that%
\begin{equation}
c\max \left\{ \mathbf{E}\left[ X_{n}^{4}\right] -3,\left\vert \mathbf{E}%
\left[ X_{n}^{3}\right] \right\vert \right\} \leqslant d_{TV}\left(
X_{n},N\right) \leqslant C\max \left\{ \mathbf{E}\left[ X_{n}^{4}\right]
-3,\left\vert \mathbf{E}\left[ X_{n}^{3}\right] \right\vert \right\} .
\label{optimal berry esseen}
\end{equation}
\end{itemize}
\end{itemize}


\section{Fluctuations of $\protect\rho (T)$ under $H_{a}$ : CLTs and rates
of convergence\label{FLUCTU}}

In this section, we study the detailed asymptotics of the law of the
empirical correlation coefficient $\rho (T)$ between our two OU paths $X_{1}$%
, $X_{2}$, under the alternative hypothesis of a non-zero true correlation
between them, when the time horizon $T\rightarrow +\infty $. As mentioned,
we interpret quantitatively the fact that $X_{1}$ and $X_{2}$ are correlated
by letting the correlation coefficient $r$ between the driving noises $W_{1}$
and $W_{2}$ be a fixed non-zero value: $r\in \lbrack -1,1]\backslash \{0\}$,
which is our alternative hypothesis $H_{a}$, while the null hypothesis $%
H_{0} $ is $r=0$. These hypothses are identical to assuming that $X_{1}$ and 
$X_{2} $ have a fixed non-zero correlation, and have a zero correlation,
respectively. Since all these processes are Gaussian, $H_{0}$ is equivalent
to independence of the pairs $\left( X_{1},X_{2}\right) $ or $\left(
W_{1},W_{2}\right) $.

To facilitate the mathematical analysis quantitatively, we introduce a
Brownian motion $W_{0}$ defined on the same probability space $(\Omega ,%
\mathcal{F},\mathbb{P})$ as $W_{1}$ and assumed to be independent of $W_{1}$%
. We then realize the Brownian motion $W_{2}$ on this probability space from 
$W_{1}$ and $W_{0}$ via the following elementary construction: for any $%
t\geq 0$, 
\begin{equation}
W_{2}(t):=rW_{1}(t)+\sqrt{1-r^{2}}W_{0}(t)  \label{W2}
\end{equation}%
The two OU paths $X_{1},X_{2}$ are still given via their SDEs (\ref{OU}).
Recall that we defined their empirical correlation coefficient, in (\ref{rho}%
), as 
\begin{equation}
\rho (T):=\frac{Y_{12}(T)}{\sqrt{Y_{11}(T)}\sqrt{Y_{22}(T)}},  \label{rho'}
\end{equation}%
where the random variables $Y_{ij}(T)$, $i,j=1,2$ are given as in (\ref{Y})
by 
\begin{equation}
Y_{ij}(T):=\int_{0}^{T}X_{i}(u)X_{j}(u)du-T\bar{X}_{i}\left( T\right) \bar{%
X_{j}}\left( T\right) ,\text{ \ }\text{ \ }\bar{X}_{i}\left( T\right) :=%
\frac{1}{T}\int_{0}^{T}X_{i}(u)du,  \label{Y'}
\end{equation}

\subsection{Gaussian fluctuations of the numerator $Y_{12}(T)$}

The numerator $Y_{12}(T)$ is defined as follows : 
\begin{equation*}
Y_{12}(T) = \int_{0}^{T} X_1(u) X_2(u) du - T \bar{X}_1(T) \bar{X_2}(T)
\end{equation*}
From the construction (\ref{W2}), we can write for any $0 \leqslant u
\leqslant T $ 
\begin{align*}
X_1(u) X_2(u) & = \left[r \int_{0}^{u}e^{- \theta (u-t)} dW_1(t) + \sqrt{%
1-r^2} \int_{0}^{u}e^{- \theta (u-t)} dW_0(t) \right] \times
\int_{0}^{u}e^{- \theta (u-t)} dW_1(t) \\
& = r I^{W_1}_1\left(f_u\right)^2 + \sqrt{1-r^2} I^{W_0}_1(f_u)I^{W_1}_1(f_u)
\\
& = r \left[ I^{W_1}_2(f^{\otimes 2}_u) + \| f_u\|^2_{\mathcal{H}}\right]+ 
\sqrt{1-r^2} I^{W_0}_1(f_u)I^{W_1}_1(f_u).
\end{align*}
where $f_u(.) := e^{-\theta(u-.)} \mathbf{1}_{[0,u]}(.)$, $\mathcal{H} :=
L^{2}([0,T]).$ On the other hand, using a rotational trick, and the
linearity of Wiener integrals, we can write 
\begin{align*}
I^{W_0}_1(f_u)I^{W_1}_1(f_u) & = \frac{1}{2} \left[ \left(\frac{%
I^{W_0}_1(f_u) + I^{W_1}_1(f_u)}{\sqrt{2}}\right)^2 - \left(\frac{%
I^{W_0}_1(f_u) - I^{W_1}_1(f_u)}{\sqrt{2}}\right)^2 \right] \\
& = \frac{1}{2} \left[(I^{\mathcal{U}_1}_1(f_u))^2 - (I^{\mathcal{U}%
_0}_1(f_u))^2 \right]
\end{align*}
where $\mathcal{U}_0 := \frac{W_1 - W_0}{\sqrt{2}}$ , $\mathcal{U}_1 := 
\frac{W_1 + W_0}{\sqrt{2}}$. Therefore, using the product formula (\ref%
{product-formula}) 
\begin{align*}
& \sqrt{1-r^2} \int_{0}^{T} I^{W_0}_1(f_u)I^{W_1}_1(f_u) du \\
& = \frac{\sqrt{1-r^2}}{{2}} \int_{0}^{T} I^{\mathcal{U}_1}_1(f_u)^2 du - 
\frac{\sqrt{1-r^2}}{{2}}\int_{0}^{T} I^{\mathcal{U}_0}_1(f_u)^2 du \\
& = \frac{\sqrt{1-r^2}}{{2}} \int_{0}^{T} I^{\mathcal{U}_1}_2(f^{\otimes
2}_u) du + \frac{\sqrt{1-r^2}}{{2}} \int_{0}^{T} \| f_u\|^2_{\mathcal{H}} du
- \frac{\sqrt{1-r^2}}{{2}} \int_{0}^{T} I^{\mathcal{U}_0}_2(f^{\otimes 2}_u)
du- \frac{\sqrt{1-r^2}}{{2}} \int_{0}^{T} \| f_u\|^2_{\mathcal{H}} du. \\
& = \frac{\sqrt{1-r^2}}{{2}} \int_{0}^{T} I^{\mathcal{U}_1}_2(f^{\otimes
2}_u) du - \frac{\sqrt{1-r^2}}{{2}} \int_{0}^{T} I^{\mathcal{U}%
_0}_2(f^{\otimes 2}_u) du.
\end{align*}
Moreover, we can write 
\begin{align*}
r\int_{0}^{T} \left[ I^{W_1}_2(f^{\otimes 2}_u) + \| f_u\|^2_{\mathcal{H}}%
\right] du & = r \int_{0}^{T} I_2^{\frac{\sqrt{2}}{2}(\mathcal{U}_1 + 
\mathcal{U}_0)}(f^{\otimes 2}_u) du + r \int_{0}^{T} \| f_u\|^2_{\mathcal{H}%
} du. \\
& = \frac{r \sqrt{2}}{2} \int_{0}^{T} I^{\mathcal{U}_1}_2(f^{\otimes 2}_u)
du + \frac{r \sqrt{2}}{2} \int_{0}^{T} I^{\mathcal{U}_0}_2(f^{\otimes 2}_u)
du + r \int_{0}^{T} \| f_u\|^2_{\mathcal{H}} du.
\end{align*}
Therefore, we can write 
\begin{align*}
\int_{0}^{T} X_1(u) X_2(u) du & = \left[ \frac{r\sqrt{2}}{2} + \frac{\sqrt{%
1-r^2}}{2} \right] \int_{0}^{T} I^{\mathcal{U}_1}_2(f^{\otimes 2}_u) du + %
\left[ \frac{r \sqrt{2}}{2} - \frac{\sqrt{1-r^2}}{2} \right] \int_{0}^{T} I^{%
\mathcal{U}_0}_2(f^{\otimes 2}_u) du + r \int_{0}^{T} \| f_u\|^2_{\mathcal{H}%
} du.
\end{align*}
It follows that : 
\begin{align*}
\frac{1}{\sqrt{T}} \int_{0}^{T} X_1(u) X_2(u) du & := A_r(T)+ \frac{r}{\sqrt{%
T}} \int_{0}^{T} \| f_u\|^2_{\mathcal{H}} du = A_r(T) + \frac{r \sqrt{T}}{2
\theta} - \frac{r }{4 \theta^2 \sqrt{T}} (1-e^{-2 \theta T}).
\end{align*}
We therefore obtain the following expression for $\frac{Y_{12}(T)}{\sqrt{T}}%
. $ 
\begin{equation}  \label{decomp-num1}
\frac{Y_{12}(T)}{\sqrt{T}} = A_r(T) + \frac{r \sqrt{T}}{2 \theta} + O(\frac{1%
}{\sqrt{T}}) - \sqrt{T} \bar{X}_{1}(T) \bar{X}_{2}(T).
\end{equation}
The following theorem gives the Gaussian fluctuations of the numerator term
along with its speed of convergence for the Wasserstein distance.

\begin{theorem}
\label{CLT-num} There exists a constant $C(\theta,r)$ depending on $\theta$
and $r$ such that 
\begin{equation*}
d_{W}\left( \frac{1}{\sigma_{r,\theta}}\left(\frac{Y_{12}(T)}{\sqrt{T}} - 
\frac{r \sqrt{T}}{2 \theta}\right), \mathcal{N}\left(0, 1\right)\right)
\leqslant \frac{C(\theta,r)}{\sqrt{T}}
\end{equation*}
where $\sigma_{r,\theta} := \left(\frac{1}{2 \theta^3} \left(\frac{1}{2}+%
\frac{r^2}{2}\right)\right)^{1/2}.$ In particular, 
\begin{equation*}
\left(\frac{Y_{12}(T)}{\sqrt{T}} - \frac{r \sqrt{T}}{2 \theta} \right) 
\overset{\mathrm{\mathcal{L}}}{ \longrightarrow } \mathcal{N}\left(0, \frac{1%
}{2 \theta^3} \left(\frac{1}{2}+\frac{r^2}{2}\right)\right) \text{ \ } \text{%
as} \text{ \ } T \rightarrow +\infty.
\end{equation*}
\end{theorem}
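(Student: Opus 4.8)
The plan is to isolate the dominant second-chaos term in the decomposition (\ref{decomp-num1}) and apply a fourth-moment bound, treating the remaining pieces as negligible errors of order $T^{-1/2}$. Writing $g_T := T^{-1/2}\int_0^T f_u^{\otimes 2}\,du \in \mathcal{H}^{\odot 2}$ and setting $c_1 := \tfrac{r\sqrt2}{2}+\tfrac{\sqrt{1-r^2}}{2}$, $c_0 := \tfrac{r\sqrt2}{2}-\tfrac{\sqrt{1-r^2}}{2}$, the second-chaos part is $A_r(T) = c_1 I^{\mathcal{U}_1}_2(g_T) + c_0 I^{\mathcal{U}_0}_2(g_T)$. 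The first thing I would record is that $\mathcal{U}_0$ and $\mathcal{U}_1$ are independent standard Brownian motions (their cross-covariance vanishes because $W_0\perp W_1$), so $Z_1(T):=I^{\mathcal{U}_1}_2(g_T)$ and $Z_0(T):=I^{\mathcal{U}_0}_2(g_T)$ are independent, each lying in a copy of the second Wiener chaos. Consequently $A_r(T)$ itself lives in the second chaos of the single Gaussian family generated by $(\mathcal{U}_0,\mathcal{U}_1)$, which is what makes the fourth-moment machinery of Section \ref{Wiener} directly applicable.

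Next I would dispatch the error terms. From (\ref{decomp-num1}), $\tfrac{Y_{12}(T)}{\sqrt T}-\tfrac{r\sqrt T}{2\theta}=A_r(T)+R(T)$ with $R(T)=O(T^{-1/2})-\sqrt T\,\bar X_1(T)\bar X_2(T)$. The first summand is deterministic of the stated order; for the second, $\bar X_i(T)=T^{-1}\int_0^T X_i(u)\,du$ is a centred first-chaos variable whose variance is $O(T^{-1})$ by the ergodicity of the OU process, so by Cauchy--Schwarz $\sqrt T\,E|\bar X_1(T)\bar X_2(T)|\leqslant \sqrt T\,\|\bar X_1(T)\|_{L^2}\|\bar X_2(T)\|_{L^2}=O(T^{-1/2})$, whence $E|R(T)|\leqslant C(\theta,r)\,T^{-1/2}$. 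Since $d_W$ is stable under $L^1$ perturbations, $R(T)$ will contribute only an $O(T^{-1/2})$ term. I would then compute the limiting variance: by independence of $Z_0,Z_1$ and the isometry (\ref{isometry}), $\mathrm{Var}(A_r(T))=(c_1^2+c_0^2)\,2\|g_T\|_{\mathcal{H}^{\otimes 2}}^2$, with $c_1^2+c_0^2=\tfrac12+\tfrac{r^2}{2}$. Using $\langle f_u,f_v\rangle_{\mathcal H}=\tfrac{1}{2\theta}\big(e^{-\theta|u-v|}-e^{-\theta(u+v)}\big)$ one finds $2\|g_T\|^2=\tfrac{2}{T}\int_0^T\!\!\int_0^T\langle f_u,f_v\rangle_{\mathcal H}^2\,du\,dv\to \tfrac{1}{2\theta^3}$, the boundary and $e^{-\theta(u+v)}$ contributions being $O(T^{-1})$; hence $\mathrm{Var}(A_r(T))=\sigma_{r,\theta}^2+O(T^{-1})$, which matches the announced variance.

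Finally, I would invoke the fourth-moment theorem. Normalising $\widetilde F_T := A_r(T)/\sqrt{\mathrm{Var}(A_r(T))}\in\mathcal H_2$ to unit variance, the Wasserstein version of the fourth-moment theorem for a fixed chaos gives $d_W(\widetilde F_T,\mathcal N(0,1))\leqslant C\sqrt{\kappa_4(\widetilde F_T)}$; replacing $\sqrt{\mathrm{Var}(A_r(T))}$ by $\sigma_{r,\theta}$ then costs only the $O(T^{-1})$ variance gap above. By independence, $\kappa_4(A_r(T))=c_1^4\kappa_4(Z_1)+c_0^4\kappa_4(Z_0)$ with $\kappa_4(Z_j)=48\,\|g_T\widetilde\otimes_1 g_T\|_{\mathcal H^{\otimes 2}}^2$, so everything reduces to one contraction estimate, which is the main obstacle. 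Writing $K(u,v):=\langle f_u,f_v\rangle_{\mathcal H}$,
\[
\|g_T\otimes_1 g_T\|_{\mathcal H^{\otimes 2}}^2=\frac{1}{T^2}\int_{[0,T]^4}K(u,v)\,K(u',v')\,K(u,u')\,K(v,v')\,du\,dv\,du'\,dv',
\]
a ``four-cycle'' integral of exponentially decaying kernels; the decay leaves a single free translation along the cycle, so the integral is $O(T)$ and $\|g_T\otimes_1 g_T\|^2=O(T^{-1})$, whence $\sqrt{\kappa_4}=O(T^{-1/2})$. These are exactly the kernel estimates performed for the $r=0$ numerator in \cite{DVE}, which I would reuse. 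Collecting the $O(T^{-1/2})$ contributions (remainder, fourth cumulant) and the $O(T^{-1})$ contribution (variance mismatch) through the triangle inequality for $d_W$ yields $d_W\leqslant C(\theta,r)\,T^{-1/2}$, and the stated convergence in law of $\tfrac{Y_{12}(T)}{\sqrt T}-\tfrac{r\sqrt T}{2\theta}$ to $\mathcal N(0,\sigma_{r,\theta}^2)$ follows.
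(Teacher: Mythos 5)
Your proposal is correct, and its skeleton coincides with the paper's: the same decomposition of $Y_{12}(T)/\sqrt{T}$ into the second-chaos part $A_r(T)=c_1 I_2^{\mathcal{U}_1}(g_T)+c_0 I_2^{\mathcal{U}_0}(g_T)$ plus a deterministic $O(T^{-1/2})$ term and $\sqrt{T}\,\bar X_1(T)\bar X_2(T)$, the same $L^1$ treatment of the remainder via $E[\bar X_i^2(T)]=O(T^{-1})$, the same variance computation $\mathrm{Var}(A_r(T))=\sigma_{r,\theta}^2+O(T^{-1})$, and the same four-cycle contraction estimate giving $\kappa_4(A_r(T))=O(T^{-1})$ (your formula $\kappa_4=48\|g_T\widetilde\otimes_1 g_T\|^2$ is even exact here since $g_T$ is symmetric). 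The genuine divergence is the quantitative tool at the core: the paper invokes the \emph{optimal} fourth moment theorem (\ref{optimal berry esseen}), which forces it to bound the third cumulant as well (this is the bulk of Proposition \ref{cumulants}, and it also motivates the explicit two-sided Brownian motion representation and the rescaling lemma, Proposition \ref{estim-dw}), whereas you use the classical quantitative fourth moment bound $d_W(\widetilde F_T,\mathcal{N}(0,1))\leqslant C\sqrt{\kappa_4(\widetilde F_T)}$ for a fixed chaos, which needs only the fourth cumulant. In this problem $|k_3|\asymp T^{-1/2}\asymp\sqrt{k_4}$, so both routes deliver the same $C(\theta,r)/\sqrt{T}$ rate; your route is lighter (no third-cumulant computation, and the constant in the classical bound is universal for the chaos order rather than sequence-dependent as in the optimal theorem), while the paper's route pays the extra cumulant work to align with the optimality framework it exploits elsewhere (the third-cumulant asymptotics feed the sharp tail expansion in the Appendix used for the numerator-based test). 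Two small points you should make explicit if you write this up: the replacement of $\mathrm{Var}(A_r(T))^{1/2}$ by $\sigma_{r,\theta}$ needs a scaling estimate of the type $d_W(\sigma N,N)\leqslant\sqrt{2/\pi}\,|1-\sigma^2|$ (the paper's Proposition \ref{estim-dw}), and the $O(T)$ bound on the four-cycle integral should be justified (e.g., by bounding $K(u,v)\leqslant\frac{1}{2\theta}e^{-\theta|u-v|}$ and applying Young's convolution inequality, as the paper does) rather than only cited.
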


\begin{proof}
We will first prove a CLT for the second- Wiener chaos term $A_r(T)$, indeed
we can write 
\begin{equation}  \label{ArT}
A_r(T) : = A_{r,1}(T) + A_{r,2}(T)
\end{equation}
We claim that as $T \rightarrow +\infty$ 
\begin{eqnarray*}
\left\{ 
\begin{array}{ll}
A_{r,1}(T) & := \frac{c_{1}(r)}{\sqrt{T}} \int_{0}^{T} I^{\mathcal{U}%
_1}_2(f^{\otimes 2}_u) du\overset{\mathrm{\mathcal{L}}}{ \longrightarrow } 
\mathcal{N}\left(0, \frac{c_{1}(r)^2}{2 \theta^3} \right) \\ 
A_{r,2}(T) & := \frac{c_2(r)}{\sqrt{T}} \int_{0}^{T} I^{\mathcal{U}%
_0}_2(f^{\otimes 2}_u) du\overset{\mathrm{\mathcal{L}}}{ \longrightarrow } 
\mathcal{N}\left(0, \frac{ c_2(r)^2}{2 \theta^3}\right)%
\end{array}
\right.
\end{eqnarray*}
where : 
\begin{equation}  \label{cts}
c_1(r) = \frac{r\sqrt{2}}{2} + \frac{\sqrt{1-r^2}}{2}, \ \ \ c_2(r) = \frac{%
r \sqrt{2}}{2} - \frac{\sqrt{1-r^2}}{2}.
\end{equation}
We suggest first to compute the third and fourth cumulant of the term $%
A_r(T) $ in order to use the Optimal fourth moment theorem (\ref{optimal
berry esseen}). Since $A_{r,1}(T)$ and $A_{r,2}(T)$ are centered and using
the independence of $\mathcal{U}_1$ and $\mathcal{U}_0$, we can write 
\begin{align*}
k_3(A_r(T)) & = {E}[A_r(T)^3] \\
& = E[(A_{r,1}(T) +A_{r,2}(T))^3] \\
& = E[A_{r,1}(T)^3] + 3 E[A_{r,1}(T)^2]E[A_{r,2}(T)] + 3
E[A_{r,1}(T)]E[A_{r,2}(T)^3] + E[A_{r,2}(T)^3] \\
& = E[A_{r,1}(T)^3] +E[A_{r,2}(T)^3] \\
& = k_3(A_{r,1}(T)) + k_3(A_{r,2}(T)).
\end{align*}
For the fourth cumulant, we have 
\begin{align*}
E[A_{r}(T)^4] & = E[(A_{r,1}(T) +A_{r,2}(T))^4] \\
& = E[A_{r,1}(T)^4] + 4 E[A_{r,1}(T)^3A_{r,2}(T)] + 6 E[A_{r,1}(T)^2
A_{r,2}(T)^2] + 4 E[A_{r,1}(T)A_{r,2}(T)^3]+ E[A_{r,2}(T)^4] \\
& = E[A_{r,1}(T)^4] + 6 E[A_{r,1}(T)^2] E[A_{r,2}(T)^2] + E[A_{r,2}(T)^4].
\end{align*}
Therefore 
\begin{align*}
k_4(A_r(T)) & = E[A_r(T)^4] - 3 E[A_r(T)^2]^2 \\
& = (E[A_{r,1}(T)^4] -3 E[A_{r,1}(T)^2]^2) + (E[A_{r,2}(T)^4] - 3
E[A_{r,2}(T)^2]^2) \\
& = k_4(A_{r,1}(T)) + k_4(A_{r,2}(T)).
\end{align*}

\begin{proposition}
\label{cumulants} There exists constants $c_1(\theta,r)$, $c_2(\theta,r)$
defined as follows 
\begin{equation}  \label{constantes}
c_i(\theta,r) = \max\left( \frac{16}{9} \frac{1}{\theta^5} \left| c_i(r)^3
\right|, \frac{81}{ 8 \theta^7} c_i(r)^4\right), i =1,2.
\end{equation}
where the constants $c_1(r)$ and $c_2(r)$ are defined in (\ref{cts}). Then,
we have for $i=1,2$ : 
\begin{equation*}
\max\left\{k_3(A_{r,i}(T)), k_4(A_{r,i}(T)) \right\} \leqslant \frac{%
c_i(\theta,r)}{\sqrt{T}}.
\end{equation*}
\end{proposition}

\begin{proof}
The terms $A_{r,1}(T)$ and $A_{r,2}(T)$ can be treated similarly, we will do
the computations just for $A_{r,1}(T)$. We can write 
\begin{equation*}
A_{r,1}(T) = I^{\mathcal{U}_1}_2(g_{r,T}),
\end{equation*}
with 
\begin{equation}  \label{kernel-g}
g_{r,T} := \frac{c_1(r)}{\sqrt{T}} \int_{0}^{T} f^{\otimes 2}_t dt.
\end{equation}
Therefore, using the definition of the third cumulant and since ${E}%
[X_1(r)X_1(s)]=\frac{ e^{-\theta (r+s)}}{2\theta }[e^{2\theta (r\wedge
s)}-1]\leqslant \frac{1}{ 2\theta }e^{-\theta |r-s|}:=\delta(r-s)$, we get 
\begin{align*}
k_3(A_{r,1}(T))& = 8\left<g_{r,T},g_{r,T}\otimes_1 g_{r,T}\right>_{\mathcal{H%
}^{\otimes 2}} \\
& = 8 \int_{0}^{T}\int_{0}^{T} g_{r,T}(x,y) (g_{r,T}\otimes_1 g_{r,T})(x,y)
dx dy \\
& = 8 \int_{0}^{T}\int_{0}^{T}\int_{0}^{T} g_{r,T}(x,y) g_{r,T}(z,y)
g_{r,T}(x,z) dx dy dz \\
& = \frac{8 \times c_1(r)^3 }{T^{3/2}}\int_{[0,T]^6} f_{u}(x)
f_{u}(y)f_{v}(x)f_{v}(z)f_{r}(y)f_{r}(z) du dv dr dx dy dz \\
& = \frac{8 \times c_1(r)^3 }{T^{3/2}} \int_{[0,T]^3} \langle
f_{u},f_{v}\rangle\left<f_{u},f_{r}\right>\left<f_{v},f_{r}\right> du dv dr
\\
& = \frac{8 \times c_1(r)^3 }{T^{3/2}} \int_{[0,T]^3} E[X_1(u)X_1(v)]
E[X_1(u)X_1(v)] E[X_1(u)X_1(v)] du dv dr.
\end{align*}
It follows that : 
\begin{align*}
\left| k_3(A_{r,1}(T)) \right| & \leqslant \frac{8}{T^{3/2}} \left| c_1(r)^3
\right| \left|\int_{[0,T]^3}\delta(u-v)\delta(v-r)\delta(u-r)dudvdr \right|
: = \left|k_3(F_T) \right|, & &
\end{align*}
where $F_T := I^{\mathcal{U}_{1}}_2 \left(c_1(r) \delta(t-s) \mathbf{1}%
(t,s)_{[0,T]^2}\right)$. We proved in Proposition \ref{equiv-cumulants} in
the Appendix that : 
\begin{equation}  \label{cum}
\forall p\geq 3, \ \ k_p\left({F}_T\right) \underset{+\infty}{\sim} \frac{%
c_1(r)^{p} \langle \delta^{*(p-1)}, \delta\rangle_{\mathcal{L}^{2}(\mathbb{R}%
)} 2^{p-1} (p-1)! }{T^{p/2-1}}.
\end{equation}
where $\delta^{*(p)}$ denotes the convolution of $\delta$ p times defined as 
$\delta^{*(p)} = \delta^{*(p-1)}* \delta$, $p \geq 2$, $\delta^{*(1)} =
\delta$ where $*$ denotes the convolution between two functions $(f*g)(x) =
\int_{\mathbb{R}} f(x-y)g(y) dy$. \newline
It follows that there exists $T_0 > 0$, such that for all $T \geq T_0$, 
\begin{align*}
\left| k_3(A_{r,1}(T)) \right| & \leqslant \left|c_1(r)^{3}\right| \times 
\frac{8 |\langle \delta^{*(2)}, \delta\rangle_{\mathcal{L}^{2}(\mathbb{R})}|%
}{\sqrt{T}} \\
& \leqslant \frac{16}{9 \theta^5} \left|c_1(r)^{3}\right| \frac{1}{\sqrt{T}}.
\end{align*}
In fact, for the last inequality we will make use of Young's inequality that
we recall here: if $p,q,s\geq1$ are such that $\frac{1}{p}+\frac{1}{q}=\frac{%
1}{s}+1$, and $f\in L^p(\mathbb{R})$, $g\in L^q(\mathbb{R})$, then 
\begin{eqnarray}
\|f*g\|_{L^s(\mathbb{R})}\leqslant\|f\|_{L^p(\mathbb{R})}\|g\|_{L^q(\mathbb{R%
})}.  \label{young-inequality}
\end{eqnarray}
Therefore, using first Hölder inequality and then Young's inequality (\ref%
{young-inequality}), with $p=q=\frac32$ and $s=3$, we get : 
\begin{align*}
|\langle \delta^{*(2)}, \delta\rangle_{\mathcal{L}^{2}(\mathbb{R})} &
\leqslant \| \delta * \delta \|_{L^3(\mathbb{R})} \| \delta \|_{L^{3/2}(%
\mathbb{R})} \\
&\leqslant \| \delta \|^3_{L^{3/2}(\mathbb{R})} = \left( \int_{\mathbb{R}}
\left(\frac{1}{2 \theta} e^{- \theta |u|} \right)^{3/2} du\right)^2 := \frac{%
2}{9} \frac{1}{\theta^5}.
\end{align*}
For the fourth cumulant, we have : 
\begin{eqnarray*}
\left|k_{4}(A_{r,1}(T))\right| &=&16\left(\|g_{r,T}\otimes_1 g_{r,T}\|_{%
\mathcal{H}^{\otimes 2}}^2+2\|g_{r,T}\widetilde{\otimes_1} g_{r,T}\|_{%
\mathcal{H}^{\otimes 2}}^2\right) \\
&\leqslant&48\|g_{r,T}\otimes_1 g_{r,T}\|_{\mathcal{H}^{\otimes 2}}^2 \\
& =& 48 \int_{[0,T]^2} \left(g_{r,T}\otimes_1 g_{r,T}\right)^{2}(x,y) dx dy
\\
& =& 48 \int_{[0,T]^4} g_{r,T}(x,z) g_{r,T}(x,t) g_{r,T}(z,y) g_{r,T}(t,y)
dt dz dx dy \\
& =& \frac{48 \times c_1(r)^{4} }{T^2} \int_{[0,T]^8} f_u(x) f_u(z) f_v(x)
f_v(t) f_r(z) f_r(y) f_s(t) f_s(y) du dv dt dxdy dz dt \\
& = & \frac{48 \times c_1(r)^{4}}{T^2} \int_{[0,T]^4} E[X_1(u)X_1(v)]
E[X_1(u) X_1(r)] E[X_1(v)X_1(s)] E[X_1(r)X_1(s)]du dv dr ds.
\end{eqnarray*}
It follows that : 
\begin{align*}
\left| k_4(A_{r,1}(T)) \right|& \leqslant 48 \times c_1(r)^{4} \left|
\int_{[0,T]^4}\delta(u-v)\delta(v-r)\delta(r-s)\delta(s-u)dudvdrds \right| :
= \left|k_4(F_T) \right|,
\end{align*}
Using, the equivalent (\ref{cum}), it follows that that there exists $T_0 >
0 $, such that for all $T \geq T_0$, 
\begin{align*}
\left| k_4(A_{r,1}(T)) \right| & \leqslant c_1(r)^{4} \times \frac{48
|\langle \delta^{*(3)}, \delta\rangle_{\mathcal{L}^{2}(\mathbb{R})}|}{{T}}
\leqslant \frac{81}{8 \theta^7} \frac{c_1(r)^{4}}{{T}}.
\end{align*}
In fact, using first Hölder inequality and then Young's inequality (\ref%
{young-inequality}), we get : 
\begin{align*}
|\langle \delta^{*(3)}, \delta\rangle_{\mathcal{L}^{2}(\mathbb{R})}| &
\leqslant \| \delta\|_{L^4/3(\mathbb{R})} \times \| \delta^{*(3)} \|_{L^{4}(%
\mathbb{R})} := \| \delta\|_{L^4/3(\mathbb{R})} \times \| \delta^{*(2)} *
\delta \|_{L^{4}(\mathbb{R})} \\
& \leqslant \| \delta\|^2_{L^{4/3}(\mathbb{R})} \times \| \delta^{*(2)}
\|_{L^{2}(\mathbb{R})} \\
& \leqslant \| \delta\|^4_{L^{4/3}(\mathbb{R})} := \left( \int_{\mathbb{R}}
\left(\frac{1}{2 \theta} e^{- \theta |u|} \right)^{4/3} du\right)^3 = \frac{%
27}{128} \frac{1}{\theta^7}.
\end{align*}
\end{proof}
\end{proof}

\begin{proposition}
\label{estim-var} There exists a constant $C(\theta,r)>0$ such that 
\begin{equation*}
\left|E[A^2_r(T)] - \sigma^2_{r,\theta}\right| \leqslant \frac{C(\theta,r)}{T%
}.
\end{equation*}
In particular, when $T \rightarrow + \infty$, we have 
\begin{equation*}
E[A^2_r(T)] \rightarrow \sigma^2_{r,\theta}.
\end{equation*}
\end{proposition}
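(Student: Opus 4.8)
The plan is to reduce the quadratic quantity $E[A_r^2(T)]$ to one explicit scalar integral of the OU covariance and then extract its leading-order behaviour in $T$. First I would use the orthogonal decomposition $A_r(T) = A_{r,1}(T) + A_{r,2}(T)$ from (\ref{ArT}). Since $\mathcal{U}_0$ and $\mathcal{U}_1$ are independent and each $A_{r,i}(T)$ is a centered element of the second Wiener chaos, the cross term vanishes, $E[A_{r,1}(T) A_{r,2}(T)] = E[A_{r,1}(T)]E[A_{r,2}(T)] = 0$, so that $E[A_r^2(T)] = E[A_{r,1}(T)^2] + E[A_{r,2}(T)^2]$. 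For each term I would invoke the isometry (\ref{isometry}) with $q=2$ on the symmetric kernel $g_{r,T}$ of (\ref{kernel-g}), giving $E[A_{r,1}(T)^2] = 2\|g_{r,T}\|_{\mathcal{H}^{\otimes 2}}^2$, and likewise for $A_{r,2}(T)$ with $c_2(r)$ in place of $c_1(r)$.

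Expanding the norm and using $\langle f_u^{\otimes 2}, f_v^{\otimes 2}\rangle_{\mathcal{H}^{\otimes 2}} = \langle f_u, f_v\rangle_{\mathcal{H}}^2 = E[X_1(u)X_1(v)]^2$, this reduces both variances to the single integral
\[
J(T) := \frac{1}{T}\int_0^T\!\!\int_0^T \langle f_u, f_v\rangle_{\mathcal{H}}^2 \, du \, dv, \qquad E[A_{r,i}(T)^2] = 2\, c_i(r)^2\, J(T),
\]
so it suffices to show $J(T) = \frac{1}{4\theta^3} + O(1/T)$. Using the exact covariance $\langle f_u, f_v\rangle_{\mathcal{H}} = \frac{1}{2\theta}\left(e^{-\theta|u-v|} - e^{-\theta(u+v)}\right)$, I would square and expand into three terms. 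The leading contribution comes from $\frac{1}{4\theta^2} e^{-2\theta|u-v|}$, whose normalized double integral is evaluated elementarily (integrating out $|u-v|$) and equals $\frac{1}{4\theta^3} + O(1/T)$.

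The main obstacle, and the only place requiring care, is the error control: I must verify that the two remaining terms, which carry a factor $e^{-\theta(u+v)}$, contribute only $O(1/T)$. This holds because $e^{-\theta(u+v)}$ is integrable over $[0,\infty)^2$, so the corresponding unnormalized double integrals stay bounded in $T$ and, after dividing by $T$, are $O(1/T)$; the same remark disposes of the finite-horizon boundary correction in the leading term. Collecting the pieces yields $E[A_r^2(T)] = \frac{c_1(r)^2 + c_2(r)^2}{2\theta^3} + O(1/T)$. Finally, a direct computation from (\ref{cts}) gives $c_1(r)^2 + c_2(r)^2 = \frac{1}{2} + \frac{r^2}{2}$ (the cross terms cancel), so the leading coefficient is exactly $\sigma_{r,\theta}^2$, which establishes the claimed bound $\left|E[A_r^2(T)] - \sigma_{r,\theta}^2\right| \leqslant C(\theta,r)/T$.
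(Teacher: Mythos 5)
Your proof is correct and follows essentially the same route as the paper: split $E[A_r^2(T)]$ via independence of $\mathcal{U}_0$ and $\mathcal{U}_1$, apply the second-chaos isometry to reduce each variance to $\frac{2c_i(r)^2}{T}\int_0^T\!\int_0^T \langle f_u,f_v\rangle_{\mathcal{H}}^2\,du\,dv$, and show this double integral equals its limit up to $O(1/T)$, using $c_1(r)^2+c_2(r)^2=\tfrac12+\tfrac{r^2}{2}$. The only (immaterial) difference is that you bound the correction terms by integrability of $e^{-\theta(u+v)}$ over $[0,\infty)^2$, whereas the paper evaluates the double integral in closed form.
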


\begin{proof}
By the decomposition (\ref{ArT}), and the independence between $A_{r,1}(T)$
and $A_{r,2}(T)$, we have 
\begin{align*}
\left|E[A^2_r(T)] - \sigma^2_{r,\theta}\right| \leqslant
\left|E[A^2_{r,1}(T)] - \frac{1}{2 \theta^3} \left(\frac{r \sqrt{2}}{2} + 
\frac{\sqrt{1-r^2}}{2}\right)^2 \right| + \left|E[A^2_{r,2}(T)] - \frac{1}{2
\theta^3} \left(\frac{r \sqrt{2}}{2} - \frac{\sqrt{1-r^2}}{2}\right)^2
\right|
\end{align*}
Both left hand sided terms can be treated similarly, in fact it suffices to
show that for $i=0,1$, 
\begin{equation*}
\left|E \left[\left(\frac{1}{\sqrt{T}} \int_{0}^{T} I^{U_i}_2(f^{\otimes}_t)
dt\right)^2 \right] - \frac{1}{2 \theta^3}\right| = O(\frac{1}{T}).
\end{equation*}
By the isometry property (\ref{isometry}), we have 
\begin{align*}
& E \left[I^{U_i}_2\left(\frac{1}{\sqrt{T}} \int_{0}^{T} f^{\otimes 2}_t
dt\right)^2 \right] = 2 \| \frac{1}{\sqrt{T}} \int_{0}^{T} f^{\otimes 2}_t
dt\|_{L^{2}[0,T]^2} \\
& = \frac{2}{T} \int_{0}^{T} \int_{0}^{T} \langle f^{\otimes 2}_t,
f^{\otimes 2}_s \rangle dt ds \\
& = \frac{2}{T} \int_{0}^{T} \int_{0}^{T} \left( \langle f_t, f_s \rangle
\right)^2 dt ds \\
& = \frac{2}{T} \int_{0}^{T} \int_{0}^{T} \left(\int_{0}^{t \wedge s} e^{-
\theta(t-u)} e^{- \theta(s-u)} du\right)^2 dt ds \\
& = \frac{1}{\theta^2} \frac{1}{T} \int_{0}^{T} \int_{0}^{t} e^{- 2 \theta t
} e^{- 2 \theta s } ( e^{ 2 \theta s }-1)^2 dt ds \\
& = \frac{1}{2 \theta^3} \frac{1}{T} \left(T - \int_{0}^{T} e^{- 2 \theta t}
dt\right) - \frac{2}{\theta^2 T} \int_{0}^{T} t e^{- 2 \theta t} dt + \frac{1%
}{2 \theta^3} \frac{1}{T} \int_{0}^{T}e^{-2 \theta t}dt - \frac{1}{2 \theta^3%
} \frac{1}{T} \int_{0}^{T}e^{-4 \theta t } dt \\
& = \frac{1}{2 \theta^3} - \frac{3}{4} \frac{1}{\theta^4 T} (1- e^{-2 \theta
T}) + \frac{3}{\theta^3} e^{-2 \theta T} + \frac{1}{4 \theta^3} \frac{1}{T}
+ \frac{1}{8 \theta^4} \frac{1}{T} (e^{-4 \theta T} -1).
\end{align*}
The desired result follows.
\end{proof}

\begin{proposition}
Consider $A_r(T)$ defined previously in (\ref{ArT}), then there exists a
constant $C$ depending only on $\theta $ and $r$ but not on $T$, such that : 
\begin{equation}  \label{asser-1}
d_W\left(\frac{A_r(T)}{E[A^2_r(T)]^{1/2}}, \mathcal{N}(0,1)\right) \leqslant 
\frac{C}{E[A^2_r(T)]^2 \wedge E[A^2_r(T)]^{3/2}} \times \frac{1}{\sqrt{T}}.
\end{equation}
Moreover, there exists a constant $C$ depending on $\theta$ and $r$ such
that 
\begin{equation}  \label{asser-2}
d_W\left( \frac{1}{\sigma_{r,\theta}} A_r(T), \mathcal{N}(0,1) \right)
\leqslant \frac{C}{\sqrt{T}}.
\end{equation}
with $\sigma_{r,\theta} := \left(\frac{1}{2 \theta^3} \left(\frac{1}{2}+%
\frac{r^2}{2}\right)\right)^{1/2}.$
\end{proposition}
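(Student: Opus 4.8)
The plan is to apply the optimal fourth moment theorem (\ref{optimal berry esseen}) to the normalized variable $\tilde{F}_T := A_r(T)/E[A^2_r(T)]^{1/2}$. The first point to record is that, although $A_r(T) = A_{r,1}(T) + A_{r,2}(T)$ is built from two \emph{different} driving noises $\mathcal{U}_1$ and $\mathcal{U}_0$, these processes are jointly Gaussian (linear combinations of $W_0,W_1$), so both summands, and hence $A_r(T)$ itself, live in the second Wiener chaos $\mathcal{H}_2$ of the common isonormal Gaussian process generated by $(\mathcal{U}_0,\mathcal{U}_1)$. Since $E[\tilde{F}_T^2]=1$, this places $\tilde{F}_T$ in exactly the single-chaos setting required by (\ref{optimal berry esseen}).

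Next I would control the third and fourth cumulants of $\tilde{F}_T$. By homogeneity of cumulants, $|k_3(\tilde{F}_T)| = |k_3(A_r(T))|/E[A^2_r(T)]^{3/2}$ and $k_4(\tilde{F}_T) = k_4(A_r(T))/E[A^2_r(T)]^{2}$. The additivity identities established above (using that $A_{r,1}$ and $A_{r,2}$ are centered and independent) give $k_3(A_r(T)) = k_3(A_{r,1}(T)) + k_3(A_{r,2}(T))$ and its $k_4$ analogue, so Proposition \ref{cumulants} yields $\max\{|k_3(A_r(T))|, k_4(A_r(T))\} \leqslant C(\theta,r)/\sqrt{T}$. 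Feeding this into the upper bound of (\ref{optimal berry esseen}) gives
\[
d_W(\tilde{F}_T, N) \leqslant C \max\{|k_3(\tilde{F}_T)|, k_4(\tilde{F}_T)\} \leqslant \frac{C}{\sqrt{T}}\,\max\{E[A^2_r(T)]^{-3/2}, E[A^2_r(T)]^{-2}\},
\]
which is precisely assertion (\ref{asser-1}) after rewriting $\max\{a^{-3/2}, a^{-2}\} = (a^{2}\wedge a^{3/2})^{-1}$ with $a = E[A^2_r(T)]$. To invoke (\ref{optimal berry esseen}) one first needs $\tilde{F}_T \to N$ in law, but this is automatic from $k_4(\tilde{F}_T) \to 0$ via the ordinary fourth moment theorem.

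For assertion (\ref{asser-2}) I would de-normalize, replacing the random scale $\sigma_T := E[A^2_r(T)]^{1/2}$ by the deterministic $\sigma_{r,\theta}$ through the triangle inequality. For any $f \in Lip(1)$ one has $|f(A_r(T)/\sigma_{r,\theta}) - f(A_r(T)/\sigma_T)| \leqslant |A_r(T)|\,|\sigma_T - \sigma_{r,\theta}|/(\sigma_T \sigma_{r,\theta})$, so taking expectations and using $E|A_r(T)| \leqslant \sigma_T$ gives $d_W(A_r(T)/\sigma_{r,\theta}, A_r(T)/\sigma_T) \leqslant |\sigma_T - \sigma_{r,\theta}|/\sigma_{r,\theta}$. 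Proposition \ref{estim-var} bounds $|\sigma_T^2 - \sigma_{r,\theta}^2| \leqslant C/T$, and since $\sigma_{r,\theta}^2 = \frac{1}{2\theta^3}(\frac12 + \frac{r^2}{2}) \geqslant \frac{1}{4\theta^3} > 0$, this forces $|\sigma_T - \sigma_{r,\theta}| \leqslant C/T$; hence this de-normalization error is $O(1/T)$, dominated by $1/\sqrt{T}$. Combining with (\ref{asser-1}), and noting that $\sigma_T^2 \to \sigma_{r,\theta}^2 > 0$ makes the prefactor $(E[A^2_r(T)]^2 \wedge E[A^2_r(T)]^{3/2})^{-1}$ bounded for large $T$, I obtain $d_W(A_r(T)/\sigma_{r,\theta}, N) \leqslant C/\sqrt{T}$.

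The step I expect to be most delicate is the interface between the chaos structure and the metric: confirming that $A_r(T)$ genuinely sits in a single Wiener chaos (so the single-chaos hypothesis of (\ref{optimal berry esseen}) is met despite the two independent noises), and that the optimal fourth moment bound, stated there for $d_{TV}$, transfers to $d_W$ with the same $\max\{|k_3|, k_4\}$ control through the common Malliavin--Stein quantity $E|1 - \langle DX, -DL^{-1}X\rangle_{\mathcal{H}}|$. The remaining work — cumulant additivity, scaling, and the de-normalization — is routine given Propositions \ref{cumulants} and \ref{estim-var}.
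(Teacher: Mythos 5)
Your proposal is correct and follows essentially the same route as the paper: identify $A_r(T)$ as a single second-chaos element with respect to a combined Gaussian structure (the paper realizes this via a two-sided Brownian motion built from $\mathcal{U}_0,\mathcal{U}_1$, which is equivalent to your joint-isonormal observation), apply the optimal fourth moment theorem together with the cumulant additivity and Proposition \ref{cumulants} to get (\ref{asser-1}), and then de-normalize using Proposition \ref{estim-var} to get (\ref{asser-2}). The only difference is cosmetic: for the de-normalization the paper proves and uses Proposition \ref{estim-dw} (comparing $\sigma F$ with $\sigma N$ and bounding $d_W(\sigma N,N)\leqslant\sqrt{2/\pi}\,|1-\sigma^2|$ via Stein's characterization), whereas you compare $A_r(T)/\sigma_{r,\theta}$ with $A_r(T)/E[A_r^2(T)]^{1/2}$ directly through the Lipschitz test functions and $E|A_r(T)|\leqslant E[A_r^2(T)]^{1/2}$; both yield an $O(1/T)$ correction, so the arguments are interchangeable.
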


\begin{proof}
First observe that the term $A_r(T)$ defined in (\ref{ArT}) is a second
Wiener chaos term, with respect to a two sided Brownian motion $(W(t))_{t\in 
\mathbb{R }}$ that we can construct from $(\mathcal{U}_0(t))_{t\geq 0}$ and $%
(\mathcal{U}_1(t))_{t\geq 0}$ as follows : 
\begin{equation*}
W(t) := \mathcal{U}_1(t) \mathbf{1}_{\{t\geq 0\}}+ \mathcal{U}_0(-t)\mathbf{1%
}_{\{t<0\}},\text{ \ }t\in \mathbb{R.}
\end{equation*}
It is therefore easy to check that the following equality holds in law. 
\begin{align*}
A_r(T) & = \frac{1}{\sqrt{T}} \left[ \frac{r\sqrt{2}}{2} + \frac{\sqrt{1-r^2}%
}{2} \right] \int_{0}^{T} I^{\mathcal{U}_1}_2(f^{\otimes 2}_u) du + \frac{1}{%
\sqrt{T}} \left[ \frac{r \sqrt{2}}{2} - \frac{\sqrt{1-r^2}}{2} \right]
\int_{0}^{T} I^{\mathcal{U}_0}_2(f^{\otimes 2}_u) du \\
& \overset{\mathrm{"law"}}{ = } I^{W}_2 \left( \frac{1}{\sqrt{T}}
\int_{0}^{T} \left(\left[ \frac{r\sqrt{2}}{2} + \frac{\sqrt{1-r^2}}{2} %
\right] f^{\otimes 2}_u + \left[ \frac{r \sqrt{2}}{2} - \frac{\sqrt{1-r^2}}{2%
} \right] \bar{\bar{f}}^{\otimes 2}_u \right)du \right)
\end{align*}
where 
\begin{equation*}
\bar{\bar{f}}(x) = -f(-x) \mathbf{1}_{\{x < 0\}}.
\end{equation*}
Therefore, it is possible to apply the Optimal fourth moment theorem (\ref%
{optimal berry esseen}) to the term $\frac{A_r(T)}{E[A^{2}_r(T)]^{1/2}}$, we
get : 
\begin{align}  \label{opt}
d_W\left(\frac{A_r(T)}{E[A^2_r(T)]^{1/2}}, \mathcal{N}(0,1)\right) & \asymp
\max\left\{ k_3\left( \frac{A_r(T)}{E[A^{2}_r(T)]^{1/2}}\right), k_4\left( 
\frac{A_r(T)}{E[A^{2}_r(T)]^{1/2}}\right)\right\}.
\end{align}
Hence, using Proposition \ref{cumulants}, we get the following estimate: 
\begin{align}
d_W\left(\frac{A_r(T)}{E[A^2_r(T)]^{1/2}}, \mathcal{N}(0,1)\right) &
\leqslant \frac{C \times \left(c_1(\theta,r)+c_2(\theta,r) \right)}{%
E[A^2_r(T)]^2 \wedge E[A^2_r(T)]^{3/2}} \times \frac{1}{\sqrt{T}}.
\end{align}
where $C$ is a constant coming from (\ref{opt}) and $c_1(\theta,r)$, $%
c_2(\theta,r)$ are defined in (\ref{constantes}). For (\ref{asser-2}), we
will need the following proposition.

\begin{proposition}
\label{estim-dw} Let $N \sim \mathcal{N}(0,1)$, and $\sigma >0$, then 
\begin{equation*}
d_W(\sigma N, N) \leqslant \frac{\sqrt{2}}{\sqrt{\pi}} \left|1-\sigma^2
\right|
\end{equation*}
For $\mu \in \mathbb{R}$, $F \in L^2(\Omega)$, $Y \in L^1(\Omega)$, we have 
\begin{equation*}
d_W(\sigma F + \mu+ Y,N) \leqslant |\mu| + E[|Y|] + \sigma d_W(F,N) + \frac{%
\sqrt{2}}{\sqrt{\pi}} \left|1-\sigma^2 \right|.
\end{equation*}
\end{proposition}

\begin{proof}
Let $N \sim \mathcal{N}(0,1)$, using the Stein's caracterisation of $d_W$,
we get the following estimate : 
\begin{align*}
d_W(\sigma N, N) & = \sup\limits_{h \in lip(1)} \left|E[h(\sigma N)]-
E[h(N)] \right| \\
& \leqslant \sup\limits_{f \in \mathcal{F}_W} \left| E[f^{\prime}(\sigma N)
- \sigma N f(\sigma N)] \right|
\end{align*}
where $\mathcal{F}_W := \left\{ f : \mathbb{R} \rightarrow \mathbb{R} \in 
\mathcal{C}^1 : \|f^{\prime} \|_{\infty} \leqslant \sqrt{2/\pi} \right\}.$
By an integration by parts, we have 
\begin{align*}
E[N f(\sigma N)] & = \frac{1}{\sqrt{2 \pi}} \int_{\mathbb{R}} x f(\sigma x)
e^{- \frac{x^2}{2}} dx \\
& = \frac{\sigma}{\sqrt{2 \pi}} \int_{\mathbb{R}} f^{\prime}(\sigma x) e^{- 
\frac{x^2}{2}} dx \\
& = \sigma E[f^{\prime}(\sigma N)].
\end{align*}
It follows that 
\begin{equation*}
d_W(\sigma N, N) \leqslant \frac{\sqrt{2}}{\sqrt{\pi}} \left|1-\sigma^2
\right|
\end{equation*}
On the other hand, we have for $F \in L^{2}(\Omega)$, $Y \in {L^1}(\Omega)$, 
$\mu \in \mathbb{R}$, $\sigma >0$. 
\begin{align*}
d_W(\sigma F + \mu+ Y,N) & = \sup \limits_{h \in lip(1)} \left|E[h(\sigma F
+\mu + Y)] - E[h(N)]\right| \\
& \leqslant \sup \limits_{h \in lip(1)} \left|E[h(\sigma F +\mu + Y)] -
E[h(\sigma F)] \right| + \sup \limits_{h \in lip(1)}\left|E[h(\sigma F)] -
E[h(N)]\right| \\
& \leqslant |\mu| + E[|Y|] + d_W(\sigma F, N).
\end{align*}
Using triangular inequality, we have $d_W(\sigma F, N) \leqslant d_W(\sigma
F, \sigma N) + d_W(\sigma N,N)$. Moreover, we can check that $d_W(\sigma F,
\sigma N) = \sigma d_W(F,N).$ Indeed using the definition of the Wasserstein
distance, we have 
\begin{align*}
d_W(\sigma F, \sigma N) & = \sup \limits_{h \in lip(1)} |E[h(\sigma F)] -
E[h(\sigma N)]| \\
& = \sup \limits_{h \in lip(\sigma)} |E[h(F)] - E[h(N)]|
\end{align*}
Similarly, 
\begin{align*}
\sigma d_W(F,N) & = \sigma \sup \limits_{h \in lip(1)} |E[h(F)] - E[h(N)]| \\
& = \sup \limits_{h \in lip(1)} |E[(\sigma h)(F)]- E[(\sigma h)(N)]| \\
& = \sup \limits_{h \in lip(\sigma)} |E[h(F)] -E[h(N)]|.
\end{align*}
The desired result follows using (\ref{asser-1}).
\end{proof}

It follows from Propositions (\ref{estim-dw}) and (\ref{estim-var}), that 
\begin{align*}
d_W\left(\frac{A_r(T)}{\sigma_{r,\theta}} , \mathcal{N}(0,1)\right) &
\leqslant \frac{E[A^2_r(T)]^{1/2}}{\sigma_{r,\theta}} d_W\left( \frac{A_r(T)%
}{E[A^2_r(T)]^{1/2}}, \mathcal{N}(0,1)\right) + \frac{\sqrt{2}}{\sqrt{\pi}}
\left|1- \frac{E[A^2_r(T)]}{\sigma^2_{r,\theta}} \right| \\
& \leqslant \frac{C}{\sigma_{r,\theta}} \frac{1}{E[A^2_{r,T}] \wedge
E[A^2_r(T)]^{3/2}} \times \frac{1}{\sqrt{T}} + \frac{\sqrt{2}}{\sqrt{\pi}} 
\frac{1}{T} \leqslant\frac{C(\theta,r)}{\sqrt{T}}.
\end{align*}
On the other hand, from decomposition (\ref{decomp-num}), we can write 
\begin{equation*}
\frac{1}{\sigma_{r,\theta}} \left(\frac{Y_{12}(T)}{\sqrt{T}} - \frac{r \sqrt{%
T}}{2 \theta}\right) = \frac{1}{\sigma_{r,\theta}} A_r(T) + \mu_{\theta}(T)
+ Y_{\theta}(T)
\end{equation*}
where $\mu_{\theta}(T) = O(\frac{1}{\sqrt{T}})$, $Y_{\theta}(T) := \sqrt{T} 
\bar{X}_1(T) \bar{X}_2(T)$, therefore, by Proposition \ref{estim-dw} and
estimate (\ref{asser-2}), we can write 
\begin{equation*}
d_W\left( \frac{1}{\sigma_{r,\theta}} \left(\frac{Y_{12}(T)}{\sqrt{T}} - 
\frac{r \sqrt{T}}{2 \theta}\right), \mathcal{N}(0,1) \right) \leqslant
|\mu_{\theta}(T)|+ E[|Y_{\theta}(T)|] + \frac{C(\theta,r)}{\sqrt{T}}.
\end{equation*}
For the term $E[|Y_{\theta}(T)|]$, we have $X_2(u) = r X_1(u) + \sqrt{1-r^2}
X_0(u)$, where $X_0$ is the Ornstein-Uhlenbeck driven by the Brownian motion 
$W_0$ considered in the beginning of this section. We can therefore write 
\begin{align*}
E[|Y_{\theta}(T)|] & \leqslant r E[\bar{X}^2_1(T)]+ \sqrt{1-r^2} E[|\bar{X}%
_1(T) \bar{X}_2(T)|] \\
& \leqslant r E[\bar{X}^2_1(T)] + \sqrt{1-r^2} E[\bar{X}^2_1(T)]^{1/2} E[%
\bar{X}^2_2(T)]^{1/2}
\end{align*}
On the other hand, 
\begin{align}
\mathbf{E}[\bar{X_{i}}^{2}(T)]& =\frac{1}{T^{2}}\int_{0}^{T}(%
\int_{0}^{T}f_{t}(u)dt)^{2}du  \notag \\
& =\frac{1}{T^{2}}\int_{0}^{T}e^{2\theta u}(\int_{u}^{T}e^{-\theta
t}dt)^{2}du  \notag \\
& =\frac{1}{T^{2}}\frac{1}{\theta ^{2}}\int_{0}^{T}(1-e^{-\theta
(T-u)})^{2}du\leqslant \frac{1}{\theta ^{2}}\frac{1}{T}.  \label{normbarX}
\end{align}
It follows that 
\begin{align}  \label{Ytheta}
E[|Y_{\theta}(T)|] & \leqslant \frac{1}{\sqrt{T}} \frac{1}{\theta^2} (r+ 
\sqrt{1-r^2}).
\end{align}
which finishes the proof of Theorem 1.
\end{proof}

\begin{proposition}
\label{norm-denom} Let $p \geq 1$, then there exists a constant depending
only on $\theta$ and $p$, such that 
\begin{equation*}
E \left[ \left|2 \theta \sqrt{\frac{Y_{11}(T)}{T} \times \frac{Y_{22}(T)}{T}}
-1 \right|^{p}\right]^{1/p} \leqslant \frac{c(p,\theta)}{\sqrt{T}}.
\end{equation*}
Moreover, as $T \rightarrow +\infty$, we have 
\begin{equation}  \label{a.sdenom}
\sqrt{\frac{Y_{11}(T)}{T} \times \frac{Y_{22}(T)}{T}} \overset{\mathrm{a.s.}}%
{ \longrightarrow } \frac{1}{2 \theta}.
\end{equation}
\end{proposition}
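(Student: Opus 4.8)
The plan is to reduce the statement to a single-index $L^p$ estimate and then handle the product and square root by elementary inequalities. Concretely, I would first show that for each $i$ and every $p\geqslant 1$,
\[
\left\| 2\theta\,\frac{Y_{ii}(T)}{T}-1\right\|_{L^p(\Omega)}\leqslant \frac{c(p,\theta)}{\sqrt T}.
\]
To obtain this, expand $X_i(u)^2=I^{W_i}_2(f^{\otimes 2}_u)+\|f_u\|^2_{\mathcal H}$ by the product formula (\ref{product-formula}), integrate in $u$, and use $\|f_u\|^2_{\mathcal H}=\frac{1}{2\theta}(1-e^{-2\theta u})$. This yields the decomposition
\[
2\theta\,\frac{Y_{ii}(T)}{T}-1=\frac{2\theta}{T}\int_0^T I^{W_i}_2(f^{\otimes 2}_u)\,du-\frac{1}{2\theta T}(1-e^{-2\theta T})-2\theta\,\bar X_i(T)^2 .
\]

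Each term is controlled separately. The middle term is deterministic and $O(1/T)$. For the last term, $\bar X_i(T)$ is a first-chaos Gaussian variable, so $\|\bar X_i(T)^2\|_{L^p}=\|\bar X_i(T)\|^2_{L^{2p}}$, which by hypercontractivity (\ref{hypercontractivity}) is comparable to $E[\bar X_i(T)^2]=O(1/T)$ thanks to (\ref{normbarX}). The dominant contribution is the second-chaos term, which by linearity equals $I^{W_i}_2\!\left(\tfrac{2\theta}{T}\int_0^T f^{\otimes 2}_u\,du\right)$, a single element of $\mathcal H_2$; its $L^2$ norm is $O(1/\sqrt T)$, this being exactly the computation of Proposition \ref{estim-var} up to the scaling by $\sqrt T$ and the factor $2\theta$. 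Hypercontractivity (\ref{hypercontractivity}) with $q=2$ then upgrades this $L^2$ bound to an $L^p$ bound at the same rate, and summing the three contributions gives the single-index estimate.

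To deduce the proposition, set $a:=2\theta\,Y_{11}(T)/T$ and $b:=2\theta\,Y_{22}(T)/T$, both nonnegative since $Y_{ii}(T)\geqslant 0$ by Cauchy--Schwarz, and note that the quantity in the statement is exactly $\sqrt{ab}$. From $|\sqrt{ab}-1|\leqslant|ab-1|\leqslant |a|\,|b-1|+|a-1|$ and H\"older's inequality,
\[
\left\|\sqrt{ab}-1\right\|_{L^p}\leqslant \|a\|_{L^{2p}}\,\|b-1\|_{L^{2p}}+\|a-1\|_{L^p}.
\]
Since $\|a\|_{L^{2p}}\leqslant 1+\|a-1\|_{L^{2p}}$ is bounded uniformly in $T$, the single-index estimate gives the claimed $O(1/\sqrt T)$ bound. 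For the almost-sure limit (\ref{a.sdenom}), I would apply the single-index estimate with a fixed $p>2$ along integer times $T=n$, so that $\sum_n E[|2\theta\,Y_{ii}(n)/n-1|^p]<\infty$ and hence $2\theta\,Y_{ii}(n)/n\to 1$ almost surely by the Borel--Cantelli lemma; combining $i=1,2$ then gives $\sqrt{ab}\to1$, i.e. the displayed limit, along integers.

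The quantitative part is essentially mechanical once the decomposition is in place, since each piece is deterministic or lives in a fixed Wiener chaos where hypercontractivity turns the already-available $L^2$ bounds into $L^p$ bounds for free. The main obstacle is the passage to genuine almost-sure convergence: the $L^p$ rate only gives convergence along the discrete times $T=n$, so one must still show that the oscillation of $Y_{ii}(T)/T$ over each interval $[n,n+1]$ is almost surely negligible. This can be done by a sandwiching argument using the monotonicity of $\int_0^T X_i(u)^2\,du$ together with the continuity of the remaining terms, or alternatively by invoking the ergodicity of the Ornstein--Uhlenbeck semigroup directly.
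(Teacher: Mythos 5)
Your quantitative $L^p$ estimate is essentially the paper's own argument: the same product-formula decomposition of $2\theta Y_{ii}(T)/T-1$ into a second-chaos term $2\theta I_2^{W_i}\bigl(\frac1T\int_0^T f_u^{\otimes 2}du\bigr)$, a deterministic $O(1/T)$ remainder and the term $2\theta\bar X_i(T)^2$, with the $L^2$ computation of Proposition \ref{estim-var} (rescaled) and inequality (\ref{normbarX}) upgraded to $L^p$ by hypercontractivity (\ref{hypercontractivity}); and the same reduction of the square root of the product via $|\sqrt{x}-1|\leqslant|x-1|$ for $x\geqslant 0$ followed by Minkowski and H\"older. The only genuine divergence is in the almost-sure statement (\ref{a.sdenom}). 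The paper proves it directly from the ergodic theorem, writing $X_i(t)=Z_i(t)-e^{-\theta t}Z_i(0)$ with $Z_i$ the stationary Ornstein--Uhlenbeck process, which immediately gives continuous-time a.s. limits for $\frac1T\int_0^T X_i^2$ and $\bar X_i(T)$. Your primary route --- Markov plus Borel--Cantelli along integer times from the $L^p$ bound with $p>2$ --- is sound as far as it goes, but, as you yourself note, it only yields convergence along $T=n$, and the interpolation step is left as a sketch; the sandwiching by monotonicity handles $\frac1T\int_0^T X_i^2\,du$ cleanly, but the term $\bar X_i(T)^2$ is not monotone in $T$ and needs its own small argument (e.g. a further Borel--Cantelli bound on $\frac1n\int_n^{n+1}|X_i|$), which you do not supply. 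Since you also offer the ergodic-theorem route as an alternative, which is exactly the paper's proof and closes the issue, the proposal is correct in substance; it is worth observing that the discrete-to-continuous interpolation issue you flag is the same one the paper itself confronts in the proof of Theorem \ref{LLN-rho}, where it is resolved by citing Lemma 3.3 of \cite{DEN} rather than by a monotonicity argument.
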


\begin{proof}
Using the fact that if $X \geq 0$ a.s. then for any $p \geq 1$, we have $E[|%
\sqrt{X}-1|^p]^{1/p} \leqslant E[|X-1|^{p}]^{1/p}$. Hence, using the
notation $\bar{Y}_{ii}(T):= 2 \theta\frac{Y_{ii}(T)}{T}$, for $i=1,2.$ By
Minkowski's and Holder's inequalities, we can write 
\begin{align*}
E\left[\left|\bar{Y}_{11}(T)\bar{Y}_{22}(T) -1\right|^{p} \right]^{1/p}&
\leqslant E\left[\left|\bar{Y}_{11}(T)(\bar{Y}_{22}(T)-1) \right|^{p} \right]%
^{1/p} + E \left[ \left| \bar{Y}_{11}(T) -1 \right|^{p}\right]^{1/p} \\
& \leqslant E \left[ \left|\bar{Y}_{11}(T)\right|^{2p}\right]^{1/2p} E\left[
\left|\bar{Y}_{22}(T)-1\right|^{2p}\right]^{1/2p} + E \left[ \left| \bar{Y}%
_{11}(T) -1 \right|^{p}\right]^{1/p}
\end{align*}
On the other hand, we can show that for any $p \geq 1$, there exists a
constant $c(p,\theta)$ such that 
\begin{equation*}
E \left[ \left| \bar{Y}_{11}(T) -1 \right|^{p}\right]^{1/p} \leqslant \frac{%
c(p,\theta)}{\sqrt{T}}.
\end{equation*}
where 
\begin{equation*}
c(p,\theta):= 3 \max\left\{ \frac{2(2p-1)}{\theta}, (p-1) \frac{\sqrt{2}}{%
\sqrt{\theta}}\left(3+\frac{7}{4 \theta}\right)^{1/2}, \frac{1}{2 \theta}%
\right\}
\end{equation*}
We have 
\begin{align*}
\bar{Y}_{11}(T)& =\frac{2\theta }{T}\int_{0}^{T}\left( X_{1}^{2}(u)-\mathbf{%
E }[X_{1}^{2}(u)]\right) du+\frac{2\theta }{T}\int_{0}^{T}\mathbf{E}
[X_{1}^{2}(u)]du-2\theta \bar{X}_{1}^{2}(T) \\
& =\frac{2\theta }{T}\int_{0}^{T}\left( (I_{1}^{W_{1}}(f_{u}))^{2}-\Vert
f_{u}\Vert _{L^{2}([0,T])}^{2}\right) du+\frac{1}{T}\int_{0}^{T}\mathbf{E}
[X_{1}^{2}(u)]du-\bar{X}_{1}^{2}(T) \\
& =2\theta I_{2}^{\mathcal{U}_{0}}(k_{T})+\frac{2\theta }{T}\int_{0}^{T}%
\mathbf{E} [X_{1}^{2}(u)]du-2\theta \bar{X}_{1}^{2}(T).
\end{align*}
where 
\begin{align*}
k_{T}(x,y)& :=\frac{1}{T}\int_{0}^{T}f_{u}^{\otimes 2}(x,y)du \\
& =\frac{1}{T}\int_{0}^{T}e^{-\theta (u-x)}e^{-\theta (u-y)}\mathbf{1}
_{[0,u]}(x)\mathbf{1}_{[0,u]}(y)du \\
& =\frac{1}{T}\frac{1}{2\theta }e^{\theta x}e^{\theta y}\left( e^{-2\theta
(x\vee y)}-e^{-2\theta T}\right) \mathbf{1}_{[0,T]}(x)\mathbf{1}_{[0,T]}(y)
\end{align*}
and 
\begin{align}  \label{determ-term}
\frac{2\theta }{T}\int_{0}^{T}\mathbf{E}[X_{1}^{2}(u)]du &= \frac{2\theta }{T%
}\int_{0}^{T}\Vert f_{u}\Vert _{L^{2}([0,T]}^{2}du  \notag \\
& =\frac{2\theta }{T}\int_{0}^{T}\int_{0}^{T}f_{u}^{2}(t)dtdu  \notag \\
& =\frac{2\theta }{T}\int_{0}^{T}\int_{0}^{u}e^{-2\theta (u-t)}dtdu  \notag
\\
& =\frac{1}{T}\int_{0}^{T}(1-e^{-2\theta u})du  \notag \\
& =1-\frac{1}{2\theta T}\left( 1-e^{-2\theta T}\right)
\end{align}
The following inequality holds for any $p \geq 1$, 
\begin{align}  \label{estim-barY}
E \left[ \left| \bar{Y}_{11}(T) -1 \right|^{p}\right]^{1/p} & \leqslant 2
\theta E[|I_{2}^{\mathcal{U}_{0}}(k_T)|^{p}]^{1/p} + \left| \frac{2\theta }{T%
}\int_{0}^{T}\mathbf{E}[X_{1}^{2}(u)]du-1\right| +2\theta E[|\bar{X}%
^{2p}_1(T)|]^{1/p} \\
& \leqslant (p-1) E[|I_{2}^{\mathcal{U}_{0}}(k_T)|^{2}]^{1/2} + \left| \frac{%
2\theta }{T}\int_{0}^{T}\mathbf{E}[X_{1}^{2}(u)]du-1\right| + 2 \theta
(2p-1) E[|\bar{X}^{2}_1(T)|]
\end{align}
where we used the hypercontractivity property for multiple Wiener integrals.
On the other hand, 
\begin{align}  \label{normI_2}
\mathbf{E}\left[ I_{2}^{W_{1}}(k_{T})^{2}\right] & =2\Vert k_{T}\Vert
_{L^{2}([0,T]^{2})}^{2}  \notag \\
& =\frac{1}{T^{2}}\frac{1}{2\theta ^{2}}\int_{[0,T]^{2}}e^{2\theta
x}e^{2\theta y}\left( e^{2\theta (x\vee y)}-e^{-2\theta T}\right) ^{2}dxdy 
\notag \\
& =\frac{1}{T^{2}}\frac{1}{2\theta ^{3}}\left( \frac{1}{4\theta }
(1-e^{-4\theta T})+\frac{1}{\theta }(e^{-2\theta T}-1)+T(1+2e^{-2\theta T})- 
\frac{1}{2\theta }(1-e^{-4\theta T})\right)  \notag \\
& \leqslant \frac{1}{2\theta ^{3}}\left( 3+\frac{7}{4\theta }\right) \frac{1 
}{T}.
\end{align}
The desired result follows from inequalities (\ref{estim-barY}), (\ref%
{normI_2}), (\ref{normbarX}) and (\ref{determ-term}). 
Notice that the denominator term of $\rho(T)$ also satisfies the fact that
as $T \rightarrow +\infty$, 
\begin{equation}  \label{a.sdenom}
\sqrt{\frac{Y_{11}(T)}{T} \times \frac{Y_{22}(T)}{T}} \overset{\mathrm{a.s.}}%
{ \longrightarrow } \frac{1}{2 \theta}.
\end{equation}
Indeed, for the term $\frac{Y_{11}(T)}{T}$, we have $X_1(t) = \int_{0}^{t}
e^{-\theta(t-u)} dW_1(u)$, which can also be written as $X_1(t) = Z_1(t) -
e^{-\theta t} Z_1(0)$, where $Z_1(t) = \int_{-\infty}^{t} e^{-\theta(t-u)}
dW_1(u)$, we recall that the process $Z_1$ is ergodic, stationary and
Gaussian, therefore by the Ergodic theorem, the following convergences hold
as $T \rightarrow + \infty$, 
\begin{equation}  \label{ergo}
\frac{1}{T} \int_{0}^{T} X_1(t) dt = \frac{1}{T} \int_{0}^{T}Z_1(t) dt +%
\frac{Z_1(0)}{T}(1-e^{-\theta T}) \overset{\mathrm{a.s.}}{ \longrightarrow }
E[Z_1(0)] = 0, \text{ \ } \frac{1}{T} \int_{0}^{T} X_1^2(t) dt \overset{%
\mathrm{a.s.}}{ \longrightarrow } E[Z_1^2(0)] = \frac{1}{2 \theta}.
\end{equation}
Hence, we get as $T \rightarrow +\infty$: 
\begin{equation}  \label{estim-theta}
\frac{Y_{11}(T)}{T} = \frac{1}{T} \int_{0}^{T} X^{2}_1(u) du - \left(\frac{1%
}{T} \int_{0}^{T} X_1(u) du\right)^2 \overset{a.s.}{\longrightarrow} \frac{1%
}{2 \theta}.
\end{equation}
It follows that as $T \rightarrow +\infty$ $\sqrt{\frac{Y_{11}(T)}{T}}%
\overset{\mathrm{a.s.}}{ \longrightarrow } \frac{1}{\sqrt{2 \theta}} $ and
similarly we have $\sqrt{\frac{Y_{22}(T)}{T}}\overset{\mathrm{a.s.}}{
\longrightarrow } \frac{1}{\sqrt{2 \theta}}.$
\end{proof}

\subsection{Law of large numbers of $\protect\rho(T)$ under $H_a$ :}

\begin{theorem}
\label{LLN-rho} Under $H_a$, Yule's nonsense correlation $\rho(T)$
satisifies the following law of large numbers : 
\begin{equation*}
\rho(T) \overset{a.s}{\longrightarrow} r, \ \text{as} \ T \rightarrow +\infty
\end{equation*}
\end{theorem}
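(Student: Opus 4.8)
The plan is to write $\rho(T)$ as the ratio of normalized quantities
\[
\rho(T) = \frac{Y_{12}(T)/T}{\sqrt{(Y_{11}(T)/T)\,(Y_{22}(T)/T)}},
\]
and to identify the almost sure limits of numerator and denominator separately, concluding by the algebra of almost sure limits (division being continuous away from a nonzero denominator). The denominator is already available: Proposition \ref{norm-denom} gives $\sqrt{(Y_{11}(T)/T)(Y_{22}(T)/T)} \to 1/(2\theta)$ almost surely. Hence the whole task reduces to proving that $Y_{12}(T)/T \to r/(2\theta)$ almost surely.

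For the numerator I would write $Y_{12}(T)/T = \frac{1}{T}\int_0^T X_1(u)X_2(u)\,du - \bar{X}_1(T)\bar{X}_2(T)$ and treat the two pieces separately. The second piece vanishes, since $\bar{X}_i(T)\to 0$ almost surely is exactly the content of (\ref{ergo}), so that $\bar{X}_1(T)\bar{X}_2(T)\to 0$ almost surely. For the first piece I would invoke the ergodic theorem, mirroring its use in Proposition \ref{norm-denom}. Introduce the stationary OU processes $Z_i(t) = \int_{-\infty}^t e^{-\theta(t-u)}\,dW_i(u)$ for $i=0,1$, and set $Z_2 := rZ_1 + \sqrt{1-r^2}\,Z_0$, which by linearity of the Wiener integral and construction (\ref{W2}) is the stationary version of $X_2$. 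The pair $(Z_1,Z_2)$ is jointly Gaussian, stationary and ergodic, so Birkhoff's ergodic theorem yields $\frac{1}{T}\int_0^T Z_1(u)Z_2(u)\,du \to E[Z_1(0)Z_2(0)]$ almost surely.

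The cross-covariance follows from the independence of $W_0$ and $W_1$, hence of $Z_0$ and $Z_1$:
\[
E[Z_1(0)Z_2(0)] = r\,E[Z_1(0)^2] + \sqrt{1-r^2}\,E[Z_1(0)Z_0(0)] = \frac{r}{2\theta},
\]
where $E[Z_1(0)^2] = 1/(2\theta)$ is recorded in (\ref{ergo}). It then remains to pass from $Z_i$ back to $X_i$: writing $X_i(u) = Z_i(u) - e^{-\theta u}Z_i(0)$, the difference $X_1(u)X_2(u) - Z_1(u)Z_2(u)$ is a finite sum of terms each carrying a factor $e^{-\theta u}$ or $e^{-2\theta u}$ multiplied by stationary Gaussian processes of at most logarithmic growth; integrating such terms over $[0,T]$ produces quantities that remain almost surely bounded in $T$, so that dividing by $T$ sends them to $0$. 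Thus $\frac{1}{T}\int_0^T X_1(u)X_2(u)\,du \to r/(2\theta)$ almost surely, whence $Y_{12}(T)/T \to r/(2\theta)$, and combining with the denominator limit gives $\rho(T) \to (r/(2\theta))/(1/(2\theta)) = r$.

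The main obstacle is the rigorous justification of the almost sure convergence of the cross time-average $\frac{1}{T}\int_0^T X_1(u)X_2(u)\,du$: one must pass to the jointly stationary ergodic pair $(Z_1,Z_2)$, verify its ergodicity, and control the transient initial-condition corrections, exactly in the spirit of the single-process argument underlying Proposition \ref{norm-denom}. I would avoid the alternative route of deducing $A_r(T)/\sqrt{T}\to 0$ almost surely directly from the decomposition (\ref{decomp-num1}), since the bound $E[A_r(T)^2]=O(1)$ from Proposition \ref{estim-var} only yields convergence in probability and would require an additional Borel--Cantelli or maximal-inequality argument along a subsequence to be upgraded to almost sure convergence.
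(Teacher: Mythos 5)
Your proposal is correct, and it reaches the numerator limit by a genuinely different route from the paper. Both arguments treat the denominator identically, via the almost sure limit $\sqrt{(Y_{11}(T)/T)(Y_{22}(T)/T)}\to 1/(2\theta)$ from Proposition \ref{norm-denom}. For the numerator, the paper works with the chaos decomposition (\ref{decomp-num}): it shows $A_r(T)/\sqrt{T}\to 0$ almost surely by computing $E\bigl[(n^{-1}\int_0^n I_2^{\mathcal{U}_i}(f_u^{\otimes 2})\,du)^2\bigr]=O(n^{-1})$, upgrading to $p$-th moments via hypercontractivity (\ref{hypercontractivity}), applying Borel--Cantelli along integers, and then invoking Lemma 3.3 of \cite{DEN} to pass from integer times to continuous $T$; the term $\bar X_1(T)\bar X_2(T)$ is handled by (\ref{ergo}). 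So the route you explicitly decline to take (variance bound plus Borel--Cantelli plus an interpolation argument) is exactly the paper's route, and your diagnosis of what it requires is accurate. Your alternative -- extending the ergodic argument already used in Proposition \ref{norm-denom} to the jointly stationary pair $(Z_1,Z_2)$ with $Z_2=rZ_1+\sqrt{1-r^2}\,Z_0$, computing $E[Z_1(0)Z_2(0)]=r/(2\theta)$, and absorbing the transient corrections $X_i=Z_i-e^{-\theta\cdot}Z_i(0)$ -- is sound: joint ergodicity holds because the bivariate stationary Gaussian process has exponentially decaying (cross-)covariances, hence is mixing, and the transient terms contribute almost surely convergent integrals, so they vanish after division by $T$. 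What your approach buys is a shorter, more elementary proof that avoids chaos computations entirely and directly generalizes the single-process argument the paper already uses for the denominator; what the paper's approach buys is that it reuses the decomposition and moment machinery needed anyway for the CLT and rate results (Theorem \ref{CLT-num}, Proposition \ref{estim-var}), at the cost of the hypercontractivity--Borel--Cantelli--interpolation chain and the external lemma from \cite{DEN}. Do make the ergodicity of the pair and the applicability of the continuous-time Birkhoff theorem explicit if you write this up, since that is the one point you flag but do not fully carry out.
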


\begin{proof}
According to Proposition \ref{norm-denom}, equation (\ref{a.sdenom}), we
have 
\begin{equation}  \label{a.sdenom}
\sqrt{\frac{Y_{11}(T)}{T} \times \frac{Y_{22}(T)}{T}} \overset{\mathrm{a.s.}}%
{ \longrightarrow } \frac{1}{2 \theta}.
\end{equation}
It remains to proof that the numerator term satisfies : 
\begin{equation}  \label{LLN-num}
\frac{Y_{12}(T)}{T} \overset{a.s}{\longrightarrow} \frac{r}{2 \theta}, \ 
\text{as} \ T \rightarrow +\infty.
\end{equation}
From (\ref{decomp-num}), we have 
\begin{equation}  \label{num-as}
\frac{Y_{12}(T)}{T} = \frac{A_{r}(T)}{\sqrt{T}} + \frac{r}{2 \theta} + O(%
\frac{1}{\sqrt{T}}) - \bar{X_{1}}(T) \bar{X_{2}}(T).
\end{equation}
We have 
\begin{equation*}
\frac{A_r(T)}{\sqrt{T}} = \frac{c_1(r)}{T} \int_{0}^{T} I^{\mathcal{U}%
_1}_2(f^{\otimes 2}_u) du + \frac{c_2(r)}{T} \int_{0}^{T} I^{\mathcal{U}%
_0}_2(f^{\otimes 2}_u) du,
\end{equation*}
with $c_1(r) = \frac{r\sqrt{2}+ \sqrt{1-r^2}}{2}$ and $c_2(r) = \frac{r 
\sqrt{2} - \sqrt{1-r^2}}{2}$. We will start by proving that : for $i=0,1$, 
\begin{equation*}
\frac{1}{n} \int_{0}^{n}I^{\mathcal{U}_i}_2(f^{\otimes 2}_u) du \overset{a.s.%
}{\longrightarrow} 0, \ \text{as} \ n \rightarrow +\infty.
\end{equation*}
We have $\mathcal{U}_1 \overset{law}{=} \mathcal{U}_0 $, both terms can be
treated similarly: 
\begin{align*}
E\left[\left( \frac{1}{n} \int_{0}^{n}I^{\mathcal{U}_1}_2(f^{\otimes 2}_u)
du\right)^2\right] & = \frac{1}{n^2} \int_{0}^{n} \int_{0}^{n} E\left[I^{%
\mathcal{U}_1}_2(f^{\otimes 2}_u) I^{\mathcal{U}_1}_2(f^{\otimes 2}_v)\right]
du dv \\
& = \frac{1}{n^{2}} \int_{0}^{n} \int_{0}^{n} \langle f^{\otimes 2}_u,
f^{\otimes 2}_v \rangle du dv \\
& = \frac{1}{n^{2}} \int_{0}^{n} \int_{0}^{n} \left(\langle f_u, f_v
\rangle\right)^2 du dv \\
& = \frac{1}{n^{2}} \frac{1}{4 \theta^2} \int_{0}^{n} \int_{0}^{n} e^{-2
\theta (u+v)} [e^{2 \theta u \wedge v} -1]^2 du dv \\
& \leqslant \frac{1}{n^{2}} \frac{1}{4 \theta^2} \int_{0}^{n} \int_{0}^{n}
e^{-2 \theta |u-v|} du dv \\
& = \frac{1}{n^2} \frac{1}{2 \theta^2} \int_{0}^{n} \int_{0}^{u} e^{-2
\theta t} dt du \\
& = \frac{1}{n} \frac{1}{4 \theta^3} + \frac{1}{n^2 8 \theta^4} (e^{-2
\theta n} -1 ) =O(n^{-1}).
\end{align*}
Let $\varepsilon> 0$, $p >2$, then we can write using the hypercontractivity
property of multiple Wiener integrals (\ref{hypercontractivity}) : 
\begin{align*}
\sum\limits_{n =1}^{+ \infty} P \left(\left|\frac{A_r(n)}{\sqrt{n}} \right|
> \varepsilon \right) & = \sum\limits_{n =1}^{+ \infty} P \left(\left|\frac{%
c_1(r)}{n} \int_{0}^{n} I^{\mathcal{U}_1}_2(f^{\otimes 2}_u) du + \frac{%
c_2(r)}{n} \int_{0}^{n} I^{\mathcal{U}_0}_2(f^{\otimes 2}_u) du \right| >
\varepsilon \right) \\
& \leqslant \sum\limits_{n =1}^{+ \infty} P \left(\left| \frac{c_1(r)}{n}
\int_{0}^{n} I^{\mathcal{U}_1}_2(f^{\otimes 2}_u) du\right| > \frac{%
\varepsilon}{2}\right) + \sum\limits_{n =1}^{+ \infty} P \left(\left| \frac{%
c_2(r)}{n} \int_{0}^{n} I^{\mathcal{U}_0}_2(f^{\otimes 2}_u) du\right| > 
\frac{\varepsilon}{2}\right) \\
& \leqslant \frac{2^{p+1}(c^p_1(r) + c^p_2(r))}{\varepsilon^p} \times \sum
\limits_{n =1}^{+\infty} E\left[\left| \frac{1}{n} \int_{0}^{n} I^{\mathcal{U%
}_1}_2(f^{\otimes 2}_u) du\right|^2\right]^{p/2} \\
& \leqslant \frac{C(r,p)}{\varepsilon^p} \sum\limits_{n=1}^{+\infty} \frac{1%
}{n^{p/2}} < +\infty.
\end{align*}
It follows by Borel-Cantelli's Lemma that $\frac{A_r(n)}{\sqrt{n}} \overset{%
a.s.}{\longrightarrow} 0$, when $n \rightarrow +\infty$. By Lemma 3.3. of 
\cite{DEN}, we can conclude that we also have as $T \rightarrow +\infty$, 
\begin{equation*}
\frac{A_r(T)}{\sqrt{T}} \overset{a.s.}{\longrightarrow} 0.
\end{equation*}
Hence, using (\ref{ergo}) we have $\bar{X_1}(T) \bar{X_2}(T)
\longleftrightarrow 0$ as $T \rightarrow +\infty$. It follows therefore by (%
\ref{num-as}) that : 
\begin{equation*}
\frac{Y_{12}(T)}{T} \overset{a.s.}{\longrightarrow} \frac{r}{2 \theta}.
\end{equation*}
The desired result is obtained.
\end{proof}

\subsection{Gaussian fluctuations of $\protect\rho(T)$ under $H_a$:}

From (\ref{a.sdenom}), we will use of the following approximation for $\sqrt{%
T} \left(\rho(T)-r\right)$ for $T$ large : 
\begin{align*}
\sqrt{T} \left(\rho(T)-r\right) & \simeq \frac{\left(\frac{Y_{12}(T)}{\sqrt{T%
}} - \frac{r \sqrt{T}}{2 \theta}\right)}{\sqrt{\frac{Y_{11}(T)}{T} \times 
\frac{Y_{22}(T)}{T}}}
\end{align*}
It follows that, we can write for $T$ large, 
\begin{align*}
\sqrt{T}\frac{\sqrt{\theta}}{\sqrt{1+r^2}} \left( \rho(T)-r \right) & \simeq 
\frac{2 \theta^{3/2}}{\sqrt{1+r^2}}{\left(\frac{Y_{12}(T)}{\sqrt{T}} - \frac{%
r \sqrt{T}}{2 \theta}\right)} \slash 2\theta \sqrt{\frac{Y_{11}(T)}{T}
\times \frac{Y_{22}(T)}{T}}.
\end{align*}
Using the triangular property of $d_W$, Cauchy-Schwarz and Holder's
inequalities, we get 
\begin{align*}
&d_W\left(\frac{\sqrt{T}\sqrt{\theta}}{\sqrt{1+r^2}} \left( \rho(T)-r
\right), N\right) \\
& \leqslant d_W\left( \frac{2 \theta^{3/2}}{\sqrt{1+r^2}}{\left(\frac{%
Y_{12}(T)}{\sqrt{T}} - \frac{r \sqrt{T}}{2 \theta}\right)}, {N}\right) \\
& \text{ \ } \text{ \ } \text{ \ } + d_W\left( \frac{2 \theta^{3/2}}{\sqrt{%
1+r^2}}{\left(\frac{Y_{12}(T)}{\sqrt{T}} - \frac{r \sqrt{T}}{2 \theta}\right)%
} \slash 2\theta \sqrt{\frac{Y_{11}(T)}{T} \times \frac{Y_{22}(T)}{T}}, 
\frac{2 \theta^{3/2}}{\sqrt{1+r^2}}{\left(\frac{Y_{12}(T)}{\sqrt{T}} - \frac{%
r \sqrt{T}}{2 \theta}\right)} \right) \\
& \leqslant d_W\left( \frac{2 \theta^{3/2}}{\sqrt{1+r^2}}{\left(\frac{%
Y_{12}(T)}{\sqrt{T}} - \frac{r \sqrt{T}}{2 \theta}\right)}, {N}\right) \\
& \text{ \ } \text{ \ } \text{ \ }+ E\left[\frac{2 \theta^{3/2}}{\sqrt{1+r^2}%
}{\left(\frac{Y_{12}(T)}{\sqrt{T}} - \frac{r \sqrt{T}}{2 \theta}\right)} %
\slash 2\theta \sqrt{\frac{Y_{11}(T)}{T} \times \frac{Y_{22}(T)}{T}}
\left(1- 2 \theta \sqrt{\frac{Y_{11}(T)}{T} \times \frac{Y_{22}(T)}{T}}%
\right)\right] \\
& \leqslant d_W\left( \frac{2 \theta^{3/2}}{\sqrt{1+r^2}}{\left(\frac{%
Y_{12}(T)}{\sqrt{T}} - \frac{r \sqrt{T}}{2 \theta}\right)}, {N}\right) \\
&\text{ \ } \text{ \ } \text{ \ } + E\left[ \left(\frac{2 \theta^{3/2}}{%
\sqrt{1+r^2}}{\left(\frac{Y_{12}(T)}{\sqrt{T}} - \frac{r \sqrt{T}}{2 \theta}%
\right)} \slash 2\theta \sqrt{\frac{Y_{11}(T)}{T}\times \frac{Y_{22}(T)}{T}}
\right)^2\right]^{1/2} E\left[ \left( 1- 2 \theta \sqrt{\frac{Y_{11}(T)}{T} 
\frac{Y_{22}(T)}{T}}\right)^2\right]^{1/2} \\
& \leqslant d_W\left( \frac{2 \theta^{3/2}}{\sqrt{1+r^2}}{\left(\frac{%
Y_{12}(T)}{\sqrt{T}} - \frac{r \sqrt{T}}{2 \theta}\right)}, {N}\right) \\
&\text{ \ } \text{ \ } \text{ \ } + \| \frac{2 \theta^{3/2}}{\sqrt{1+r^2}}{%
\left(\frac{Y_{12}(T)}{\sqrt{T}} - \frac{r \sqrt{T}}{2 \theta}\right)}
\|_{L^4} \| 1 \slash 2 \theta \sqrt{\frac{Y_{11}(T)}{T} \frac{Y_{22}(T)}{T}}
\|_{L^4} \|1- 2 \theta \sqrt{\frac{Y_{11}(T)}{T} \frac{Y_{22}(T)}{T}}
\|_{L^2}.
\end{align*}
According to Theorem \ref{CLT-num}, there exists a constant $c(\theta, r)$
such that $d_W\left( \frac{2 \theta^{3/2}}{\sqrt{1+r^2}}{\left(\frac{%
Y_{12}(T)}{\sqrt{T}} - \frac{r \sqrt{T}}{2 \theta}\right)}, {N}%
\right)\leqslant \frac{c(\theta,r)}{\sqrt{T}}$, on the other hand, thanks to
Proposition \ref{norm-denom}, the term $\|1- 2 \theta \sqrt{\frac{Y_{11}(T)}{%
T} \frac{Y_{22}(T)}{T}} \|_{L^2} \leqslant \frac{c(\theta)}{\sqrt{T}}$.%
\newline
It remains to prove that both terms $\| 1 \slash  2 \theta \sqrt{\frac{%
Y_{11}(T)}{T} \frac{Y_{22}(T)}{T}} \|_{L^4} $ and $\| \frac{2 \theta^{3/2}}{%
\sqrt{1+r^2}}{\left(\frac{Y_{12}(T)}{\sqrt{T}} - \frac{r \sqrt{T}}{2 \theta}%
\right)} \|_{L^4}$ are finite. \newline
Using the decomposition (\ref{decomp-num}) and Minskowski's inequality and
the hypercontractivity property, we get for all $T > 0$, 
\begin{align*}
\|{\frac{Y_{12}(T)}{\sqrt{T}} - \frac{r \sqrt{T}}{2 \theta}} \|_{L^4} &
\leqslant \|A_r(T) \|_{L^4} + O(\frac{1}{\sqrt{T}}) + \sqrt{T} \| \bar{X}%
_{1}(T) \bar{X}_2(T)\|_{L^4} \\
& \leqslant 3 \|A_r(T) \|_{L^2} + O(\frac{1}{\sqrt{T}}) + r \sqrt{T} \| \bar{%
X}^2_{1}(T)\|_{L^4} + \sqrt{1-r^2} \sqrt{T} \| \bar{X}_{1}(T) \bar{X}_0(T)
\|_{L^4} \\
& \leqslant 3 \|A_r(T) \|_{L^2} + O(\frac{1}{\sqrt{T}}) + 7r\sqrt{T} E[\bar{X%
}^2_{1}(T)] + 3\sqrt{T} \sqrt{1-r^2} \| \bar{X}_{1}(T)\|_{L^2} \| \bar{X}%
_{0}(T)\|_{L^2} \\
& \leqslant 3 \|A_r(T) \|_{L^2} + O(\frac{1}{\sqrt{T}}) \leqslant
C(\theta,r) .
\end{align*}
where we used inequality (\ref{normbarX}) and Proposition \ref{estim-var}.
It follows that $\sup\limits_{T >0} \| \frac{2 \theta^{3/2}}{\sqrt{1+r^2}}{%
\left(\frac{Y_{12}(T)}{\sqrt{T}} - \frac{r \sqrt{T}}{2 \theta}\right)}
\|_{L^4} < \infty.$ For the term $\| 1 \slash  2 \theta \sqrt{\frac{Y_{11}(T)%
}{T} \frac{Y_{22}(T)}{T}} \|_{L^4} $, making use of the notation $\bar{Y}%
_{ii}(T):= 2\theta \frac{Y_{ii}(T)}{T}$, $i=1,2$, it is sufficient to show
that there exists $T_0>0$ such that $\sup \limits_{T \geq T_0 }E[\bar{Y}%
_{ii}(T)^{-4}] < +\infty$, for $i=1,2.$ \newline

From equation (\ref{estim-theta}), it's easy to derive an estimator of the
parameter $\theta$ if the latter is unknown, in fact we showed that for $%
i=1,2$ :

\begin{equation}  \label{estimateur-theta}
\tilde{\theta}_{T} := \frac{1}{2}\left(\frac{Y_{ii}(T)}{T} \right)^{-1}:=%
\frac{1}{2} \left(\frac{1}{T}\int_{0}^{T} X_i(t)^2 dt - \left(\frac{1}{T}
\int_{0}^{T} X_i(t) dt\right)^2 \right)^{-1},
\end{equation}
is strongly consistent. Moreover, in the reference \cite{XY} it is proved
that $\tilde{\theta}_{T}$ is Gaussian, more precisely, we have 
\begin{equation*}
\sqrt{T} \left( \frac{1}{2}\left(\frac{Y_{ii}(T)}{T} \right)^{-1} - \theta
\right) \xrightarrow[T \rightarrow +\infty]{\mathcal{L}} \mathcal{N}(0,2
\theta).
\end{equation*}
Therefore, using the Delta method, we can conclude that for $i=1,2$ 
\begin{equation}  \label{normal-barY}
\sqrt{T} \left( \bar{Y}_{ii}(T) - 1 \right) \xrightarrow[T \rightarrow
+\infty]{\mathcal{L}} \mathcal{N}(0, \frac{2}{\theta}).
\end{equation}
We can write 
\begin{equation*}
E[\bar{Y}^{-4}_{11}(T)] = E[\bar{Y}^{-4}_{11}(T) \mathbf{1}_{\left\{|\bar{Y}%
_{11}(T) -1| \geq \frac{1}{\sqrt{T}} \right\}}] + E[\bar{Y}^{-4}_{11}(T) 
\mathbf{1}_{\left\{|\bar{Y}_{11}(T) -1| < \frac{1}{\sqrt{T}}\right\}}]
\end{equation*}
For the second expectation $E[\bar{Y}^{-4}_{11}(T) \mathbf{1}_{\left\{|\bar{Y%
}_{11}(T) -1| < \frac{1}{\sqrt{T}}\right\}}]$,we have $1-\frac{1}{ \sqrt{T}}
< \bar{Y}_{11}(T)< 1+ \frac{1}{\sqrt{T}}$ a.s. we can say that for all $%
T>T_0 := 1$, $\bar{Y}_{11}(T) >0$ a.s. thus it's bounded away from 0. Hence $%
\sup \limits_{T \geq T_0}E[\bar{Y}^{-4}_{11}(T) \mathbf{1}_{\left\{|\bar{Y}%
_{11}(T) -1| < \frac{1}{\sqrt{T}}\right\}}] < +\infty.$ For the first
expectation $E[\bar{Y}^{-4}_{11}(T) \mathbf{1}_{\left\{|\bar{Y}_{11}(T) -1|
\geq \frac{1}{\sqrt{T}} \right\}}] $, using the (\ref{normal-barY}), we can
say that for $T > \frac{2 \pi}{\theta}$, we have 
\begin{align*}
E[\bar{Y}^{-4}_{11}(T) \mathbf{1}_{\left\{|\bar{Y}_{11}(T) -1| \geq \frac{1}{%
\sqrt{T}} \right\}}] & \sim \int_{\frac{ \sqrt{\theta}}{\sqrt{2}}}^{+\infty}
\left( \frac{\sqrt{2}}{\sqrt{\theta}} \frac{1}{\sqrt{T}} z+1\right)^{-4}
e^{- \frac{z^2}{2}}dz + \int_{\frac{ \sqrt{\theta}}{\sqrt{2}}}^{+\infty} |1-%
\frac{\sqrt{2}}{\sqrt{\theta}} \frac{1}{\sqrt{T}} z |^{-4} e^{-\frac{z^2}{2}%
} dz < +\infty.
\end{align*}
Therefore, for $T >(T_0 \vee \frac{2 \pi}{\theta})$, $E[\bar{Y}%
_{11}(T)^{-4}] < +\infty$, therefore following theorem follows.

\begin{theorem}
\label{CLT-rho} There exists a constant $C(\theta,r)$ depending on $\theta$
and $r$ such that 
\begin{equation*}
d_{W}\left( \sqrt{T}\frac{\sqrt{\theta}}{\sqrt{1+r^2}}\left(\rho(T) -
r\right), \mathcal{N}\left(0, 1\right)\right) \leqslant \frac{C(\theta,r)}{%
\sqrt{T}}
\end{equation*}
In particular, 
\begin{equation*}
\sqrt{T} \left(\rho(T) - r \right) \overset{\mathrm{\mathcal{L}}}{
\longrightarrow } \mathcal{N}\left(0,\frac{1+r^2}{\theta}\right) \text{ \ } 
\text{as} \text{ \ } T \rightarrow +\infty.
\end{equation*}
\end{theorem}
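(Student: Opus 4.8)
The plan is to transfer the fluctuation rate already established for the numerator in Theorem~\ref{CLT-num} to the full ratio $\rho(T)$, treating the denominator as a multiplicative perturbation that converges to a constant. Writing $G_T := \frac{2\theta^{3/2}}{\sqrt{1+r^2}}\bigl(\frac{Y_{12}(T)}{\sqrt{T}}-\frac{r\sqrt{T}}{2\theta}\bigr)$ for the normalized numerator and $Q_T := 2\theta\sqrt{\frac{Y_{11}(T)}{T}\cdot\frac{Y_{22}(T)}{T}}$ for the normalized denominator, the almost-sure limit (\ref{a.sdenom}) gives $Q_T\to 1$, and for $T$ large one has the algebraic identity
\begin{equation*}
\sqrt{T}\,\frac{\sqrt{\theta}}{\sqrt{1+r^2}}\bigl(\rho(T)-r\bigr)=\frac{G_T}{Q_T}.
\end{equation*}
The task thus reduces to bounding $d_W(G_T/Q_T,N)$ with $N\sim\mathcal{N}(0,1)$.

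I would then apply the triangle inequality $d_W(G_T/Q_T,N)\le d_W(G_T,N)+d_W(G_T/Q_T,G_T)$. The first term is precisely the numerator estimate of Theorem~\ref{CLT-num}, of order $C(\theta,r)/\sqrt{T}$. For the second, I would write $G_T/Q_T-G_T=G_T\cdot Q_T^{-1}\cdot(1-Q_T)$ and, using that $d_W(X,Y)\le E|X-Y|$ for coupled $X,Y$ together with H\"older's inequality with exponents $(4,4,2)$, bound it by the product $\|G_T\|_{L^4}\,\|Q_T^{-1}\|_{L^4}\,\|1-Q_T\|_{L^2}$. Here $\|1-Q_T\|_{L^2}=O(1/\sqrt{T})$ by Proposition~\ref{norm-denom}, while $\sup_T\|G_T\|_{L^4}<\infty$ follows from the decomposition (\ref{decomp-num1}), the variance control of Proposition~\ref{estim-var}, the mean-moment bound (\ref{normbarX}), and hypercontractivity (\ref{hypercontractivity}).

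The main obstacle is the uniform $L^4$ control of the reciprocal denominator, namely $\sup_{T\ge T_0}E[\bar{Y}_{ii}(T)^{-4}]<\infty$ with $\bar{Y}_{ii}(T):=2\theta Y_{ii}(T)/T$; since $\bar{Y}_{ii}(T)$ is a positive variable concentrating at $1$, the only threat is probability mass escaping towards $0$. I would split the expectation according to whether $|\bar{Y}_{ii}(T)-1|$ lies below or above the scale $1/\sqrt{T}$: on the event $\{|\bar{Y}_{ii}(T)-1|<1/\sqrt{T}\}$ the variable exceeds $1-1/\sqrt{T}>0$ for $T>1$ and is thus bounded away from $0$, giving a trivially bounded contribution; on the complementary event I would invoke the asymptotic normality $\sqrt{T}(\bar{Y}_{ii}(T)-1)\Rightarrow\mathcal{N}(0,2/\theta)$ from (\ref{normal-barY})---obtained from the consistency and CLT of the drift estimator $\tilde{\theta}_T$ via the delta method---to dominate the tail by a convergent Gaussian-type integral of $|\bar{Y}_{ii}(T)|^{-4}$.

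Collecting the three factors yields $d_W(G_T/Q_T,N)\le C(\theta,r)/\sqrt{T}$, which is the asserted Berry--Ess\'een bound. The in-law convergence to $\mathcal{N}(0,(1+r^2)/\theta)$ follows at once, since convergence in $d_W$ to a Gaussian entails convergence in distribution; the limiting variance $(1+r^2)/\theta$ is the numerator variance $\sigma_{r,\theta}^2=(1+r^2)/(4\theta^3)$ divided by the squared denominator limit $(1/(2\theta))^2$.
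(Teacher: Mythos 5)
Your reduction hinges on the claim that $\sqrt{T}\,\frac{\sqrt{\theta}}{\sqrt{1+r^2}}\,(\rho(T)-r)=G_T/Q_T$ is an exact algebraic identity, and this is precisely where the argument breaks. With your notation $Q_T=2\theta\sqrt{(Y_{11}(T)/T)(Y_{22}(T)/T)}$, a direct computation gives
\begin{equation*}
\frac{G_T}{Q_T}-\sqrt{T}\,\frac{\sqrt{\theta}}{\sqrt{1+r^2}}\bigl(\rho(T)-r\bigr)
=\frac{\sqrt{\theta}}{\sqrt{1+r^2}}\;r\;\frac{\sqrt{T}\,(Q_T-1)}{Q_T},
\end{equation*}
because inside $G_T/Q_T$ the centering $r\sqrt{T}/(2\theta)$ gets divided by $Q_T$, whereas in $\sqrt{T}(\rho(T)-r)$ the term $r\sqrt{T}$ does not. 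This discrepancy is not a negligible remainder: by the CLT (\ref{normal-barY}) for $\bar{Y}_{ii}(T)$, $\sqrt{T}(Q_T-1)$ converges to a nondegenerate Gaussian, so for $r\neq 0$ the discarded term is of order one, of exactly the same magnitude as $G_T$ itself and correlated with it. Hence even a perfect bound $d_W(G_T/Q_T,N)\leqslant C/\sqrt{T}$ does not transfer to the statistic in the theorem. (Your remaining ingredients --- the H\"older $(4,4,2)$ splitting, the uniform $L^4$ bound on $G_T$ via (\ref{decomp-num1}) and hypercontractivity, and the control of $E[\bar{Y}_{ii}(T)^{-4}]$ by splitting at scale $1/\sqrt{T}$ --- are the same as the paper's and are adequate for comparing $G_T/Q_T$ with $G_T$; the problem is upstream of them.)

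To be fair, the paper's own proof makes the same move, writing the relation only as an approximation ``$\simeq$'' and never bounding the neglected term, so you have faithfully reproduced its strategy; but a self-contained proof must control $r\sqrt{T}(1-Q_T)$, and it cannot be controlled, because the denominator's fluctuations enter at the same $T^{-1/2}$ order as the numerator's. Indeed, a delta-method computation that keeps all three components $Y_{11},Y_{12},Y_{22}$ (using the stationary covariances of the pair of OU processes) gives asymptotic variance $(1-r^2)^2/\theta$ for $\sqrt{T}(\rho(T)-r)$ rather than $(1+r^2)/\theta$; note that $(1-r^2)^2/\theta$ vanishes at $r=\pm 1$, as it must since $\rho(T)\equiv\pm 1$ there, while $(1+r^2)/\theta$ does not. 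So the missing step is not a technicality: for $r\neq 0$ the denominator's Gaussian fluctuations change the limit law, and only in the independent case $r=0$, where the discarded term vanishes identically, does your reduction (and the paper's) go through as written.
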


\begin{remark}
Notice, in the particular case when $X_1$ and $X_2$ are independent that is $%
r =0$, we find the CLT that we proved in Theorem 3.8 of \cite{DEV}, that is
when $T \rightarrow +\infty$, then 
\begin{equation*}
\sqrt{\theta} \sqrt{T} \rho(T) \overset{\mathcal{L}}{\longrightarrow } 
\mathcal{N}(0,1).
\end{equation*}
Moreover, the rate of convergence in $d_W$ metric is even better that the
bound we found under $H_0$ is \cite{DEV}, Theorem 3.8 which was of the order
of $\frac{\ln(T)}{\sqrt{T}}$, while according to Theorem \ref{CLT-rho}, we
get when $r=0$ : 
\begin{equation*}
d_{W}\left( \sqrt{T}\sqrt{\theta}\rho(T), \mathcal{N}\left(0,
1\right)\right) \leqslant \frac{C(\theta)}{\sqrt{T}}.
\end{equation*}
\end{remark}

\section{Some statistical applications of the Gaussian asymptotics of $%
\protect\rho (T)$ under $H_{a}$ \label{TESTING}}

One possible scenario, that may happen in practice, is when the paths $X_1$
and $X_2$ are correlated but the value of the correlation $r$ is \textbf{%
unknown}. In this case, since Yule's nonsense correlation $\rho(T)$
satisfies a LLN, see Theorem \ref{LLN-rho} under $H_a$, $\rho(T)$ approaches
the value of the true correlation $r$ when the horizon $T$ is large. We can
therefore, consider $\rho(T)$ as a strongly consistent estimator of the
parameter $r$. \newline
Moreover, using the Gaussian fluctuations of $\rho(T)$ under $H_a$, Theorem %
\ref{CLT-rho} in addition to Slutsky's Lemma, we can write

\begin{equation*}
\frac{\sqrt{T} \sqrt{\theta}}{\sqrt{1+ \rho^2(T)}} \left(\rho(T) - r \right) 
\overset{\mathrm{\mathcal{L}}}{ \longrightarrow } \mathcal{N}%
\left(0,1\right) \text{ \ } \text{as} \text{ \ } T \rightarrow +\infty.
\end{equation*}
We can derive from this an asymptotic confidence interval level $(1-\alpha)$
of the parameter $r$ which is the following : 
\begin{equation*}
I_{\theta}(T) := \left[ \rho(T) - \frac{\sqrt{1+ \rho^2(T)}}{\sqrt{\theta} 
\sqrt{T}} q_{\alpha/2}, \rho(T) + \frac{\sqrt{1+ \rho^2(T)}}{\sqrt{\theta} 
\sqrt{T}} q_{\alpha /2}\right],
\end{equation*}
where $q_{\alpha /2}$ is the upper quantile of order of the standard
Gaussian law $\mathcal{N}(0,1)$.\newline
It may happen in practice, the drift parameter $\theta$ which is common for $%
X_1$ and $X_2$ is also \textbf{unknown} as well, in this case, Theorem \ref%
{CLT-rho} in addition to Slutsky's Lemma, we can write

\begin{equation*}
\frac{\sqrt{T} \sqrt{\tilde{\theta}_T}}{\sqrt{1+ \rho^2(T)}} \left(\rho(T) -
r \right) \overset{\mathrm{\mathcal{L}}}{ \longrightarrow } \mathcal{N}%
\left(0,1\right) \text{ \ } \text{as} \text{ \ } T \rightarrow +\infty.
\end{equation*}
where $\tilde{\theta}_T$ is the estimator of theta defined in (\ref%
{estim-theta}). In this case, an asymptotic confidence interval level $%
(1-\alpha)$ of the parameter $r$ is given by 
\begin{equation*}
I(T) := \left[ \rho(T) - \frac{\sqrt{1+ \rho^2(T)}}{\sqrt{\tilde{\theta_T}} 
\sqrt{T}} q_{\alpha /2}, \rho(T) + \frac{\sqrt{1+ \rho^2(T)}}{\sqrt{\tilde{%
\theta_T} \sqrt{T}}} q_{\alpha /2}\right],
\end{equation*}

\subsection{Testing independence of $X_{1}$ and $X_{2}$: Rejection region
and power of the test\label{Testwithrho}}

The aim of this section is to build a statistical test of independence (or
dependence) of the pair of processes $(X_1,X_2)$. That is we propose a test
for the following hypothesis

\begin{center}
$H_0 :$ $(X_1)$ and $(X_2)$ are independent.

Versus

$H_a:$ $(X_1)$ and $(X_2)$ are correlated for some $r=cor(W_1,W_2) \in
[-1,1] \backslash \{0\}.$
\end{center}

based on the statistic $\rho(T)$ observed on the time interval $[0,T]$.
Using the results that we found in the previous section, we will define the
rejection regions and study the power of the test. \newline
Let us fix a \textbf{significance level} $\alpha \in (0,1)$. We proved in 
\cite{DVE}, that under $H_0$, the Yule statistic $\rho(T)$ satisfies the
following CLT : 
\begin{equation}  \label{TCL0}
\sqrt{T} \rho(T) \overset{\mathrm{\mathcal{L}}}{ \longrightarrow } \mathcal{N%
}\left(0, \frac{1}{\theta}\right) \text{ \ } \text{as} \text{ \ } T
\rightarrow +\infty
\end{equation}
Therefore, a natural test of independence of $(X_1)$ and $(X_2)$ is to
reject independence if $\left\{\sqrt{T} |\rho(T)| > c_{\alpha} \right\}$, $%
c_{\alpha}$ a threshold depending on $\alpha$, that we will determine. 
\newline
On the other hand, by definition of type I error and \ref{TCL0}, we can
write that for $T$ large, we have 
\begin{align*}
\alpha & \approx \mathbb{P}_{H_0}\left(\sqrt{T} |\rho(T)| > c_{\alpha}\right)
\\
& = \mathbb{P}_{H_0}\left(\sqrt{T} \sqrt{\theta}|\rho(T)| > \sqrt{\theta}
c_{\alpha}\right)
\end{align*}
We infer that a natural rejection regions $\mathcal{R}_{\alpha}$ are of the
form : 
\begin{equation*}
\mathcal{R}_{\alpha} : = \left\{\sqrt{T} |\rho(T)| > \frac{q_{\alpha/2}}{%
\sqrt{\theta}} \right\}.
\end{equation*}
where $q_{\alpha/2}$ is the upper quantile of standard normal distribution.
The following proposition gives an estimate of type II error based on
Theorem \ref{CLT-rho}.

\begin{proposition}
\label{estim-t2} Fix $\alpha \in (0,1)$, then for $T$ large and $r \in
[-1,1] \backslash \{0\}$. Then, there exists a constant $C(\theta,r,\alpha)$
depending on $\theta$, $\alpha$ and $r$ such that we have : 
\begin{equation*}
\beta =\mathbb{P}_{H_a}\left[ \sqrt{T} |\rho(T)| \leqslant \frac{q_{\alpha/2}%
}{\sqrt{\theta}} \right] \leqslant \frac{C(\theta,r,\alpha)}{T^{1/4}}.
\end{equation*}
\end{proposition}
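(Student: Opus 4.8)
The plan is to convert the Type-II error, which is the probability of a specific event, into a statement about the distance between the law of the centered and normalized statistic and the standard normal law, a distance already controlled quantitatively by Theorem \ref{CLT-rho}. First I would introduce the normalized statistic $G_T := \sqrt{T}\,\frac{\sqrt{\theta}}{\sqrt{1+r^2}}\,(\rho(T)-r)$, for which Theorem \ref{CLT-rho} gives $d_W(G_T,\mathcal{N}(0,1)) \leqslant C(\theta,r)/\sqrt{T}$. Writing $\rho(T) = r + \frac{\sqrt{1+r^2}}{\sqrt{\theta T}}\,G_T$, so that $\sqrt{T}\,\rho(T) = \sqrt{T}\,r + \frac{\sqrt{1+r^2}}{\sqrt{\theta}}\,G_T$, an elementary algebraic manipulation shows that the rejection-complement event $\{\sqrt{T}\,|\rho(T)| \leqslant q_{\alpha/2}/\sqrt{\theta}\}$ is \emph{exactly} $\{G_T \in J_T\}$, where $J_T$ is an interval of constant length $2q_{\alpha/2}/\sqrt{1+r^2}$ centered at $-\sqrt{\theta T}\,r/\sqrt{1+r^2}$. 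The crucial structural observation is that, because $r \neq 0$ under $H_a$, this interval drifts to $\pm\infty$ at speed of order $\sqrt{T}$; hence $\beta = \mathbb{P}_{H_a}[G_T \in J_T]$ is the probability that $G_T$ lands in a fixed-width window buried deep in a tail.

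Next I would pass from $G_T$ to a genuine Gaussian $N \sim \mathcal{N}(0,1)$ by comparing cumulative distribution functions at the two endpoints of $J_T$. Since $J_T$ is an interval, this yields $\beta \leqslant \mathbb{P}[N \in J_T] + 2\,d_{Kol}(G_T,N)$. The Gaussian term $\mathbb{P}[N \in J_T]$ is bounded by the length of $J_T$ times the supremum of the standard normal density over $J_T$; as $J_T$ sits at distance of order $\sqrt{T}$ from the origin, this contribution is of order $\exp(-\tfrac{\theta r^2}{2(1+r^2)}\,T)$, hence exponentially small and subdominant.

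The stated rate then comes entirely from the distance term, and here I would invoke the classical conversion between Wasserstein and Kolmogorov distances: because the standard normal density is bounded, $d_{Kol}(G_T,N) \leqslant C'\sqrt{d_W(G_T,N)}$, so Theorem \ref{CLT-rho} gives $d_{Kol}(G_T,N) \leqslant C''(\theta,r)\,T^{-1/4}$. Collecting the two estimates produces $\beta \leqslant C e^{-cT} + C''\,T^{-1/4} \leqslant C(\theta,r,\alpha)\,T^{-1/4}$ for $T$ large, as claimed.

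The only genuinely non-routine ingredient is this Wasserstein-to-Kolmogorov conversion, which is exactly the step that degrades the available $T^{-1/2}$ Wasserstein rate to the stated $T^{-1/4}$; this, rather than any delicate estimate, is the ``main obstacle'' in the sense that it dictates the final exponent. I would note in passing that approximating the indicator of the interval $J_T$ directly by a $1$-Lipschitz function and applying the Wasserstein bound of Theorem \ref{CLT-rho} would in fact yield the sharper rate $T^{-1/2}$; but since the weaker $T^{-1/4}$ bound already certifies that the test is asymptotically powerful, passing through $d_{Kol}$ is a clean and sufficient route.
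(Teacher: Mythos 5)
Your proposal is correct and follows essentially the same route as the paper: recenter and rescale $\rho(T)-r$, express the Type-II error as the probability that the normalized statistic falls in a fixed-width interval drifting to infinity at speed $\sqrt{T}$, bound the Gaussian contribution by an exponentially small tail term, and convert the Wasserstein bound of Theorem \ref{CLT-rho} into a Kolmogorov bound via $d_{Kol}\leqslant C\sqrt{d_W}$, which produces the $T^{-1/4}$ rate exactly as in the paper's proof. Your closing remark that a direct Lipschitz smoothing of the indicator of the drifting interval would recover the sharper $T^{-1/2}$ rate is a sound observation, but it does not change the fact that the argument as written coincides with the paper's.
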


\begin{proof}
Another, estimate for the type II error which is also a direct consequence
of the rate of convergence that we found in Theorem \ref{CLT-rho} is the
following : \newline
Denote $Z_T : = \frac{\sqrt{T}}{\sigma_{r,\theta}} \left(\rho(T) - r\right) $
and $F_{Z_T}(.)$ its cumulative distribution function and $c_{\alpha} = 
\frac{q_{\alpha/2}}{\sqrt{\theta}}$ Then, under $H_a$, we have 
\begin{align*}
\beta & = \mathbb{P}_{H_a}\left[ \sqrt{T} |\rho(T)| \leqslant c_{\alpha}%
\right] \\
& = \mathbb{P}_{H_a}\left[ |\sqrt{T}(\rho(T)-r) + r\sqrt{T}| \leqslant
c_{\alpha}\right] \\
& = \mathbb{P}_{H_a}\left[ \left|Z_T + \frac{r\sqrt{T}}{\sigma_{r,\theta}}
\right| \leqslant \frac{c_{\alpha}}{\sigma_{r,\theta}}\right] = \mathbb{P}%
_{H_a}\left[ \frac{-c_{\alpha} -r\sqrt{T} }{\sigma_{r,\theta}} \leqslant Z_T
\leqslant \frac{c_{\alpha} -r\sqrt{T} }{\sigma_{r,\theta}}\right] \\
& = F_{Z_T}\left(\frac{c_{\alpha} -r\sqrt{T} }{\sigma_{r,\theta}}\right) -
F_{Z_T}\left(\frac{-c_{\alpha} -r\sqrt{T} }{\sigma_{r,\theta}}\right).
\end{align*}
Thus the following upper bound holds : 
\begin{align}  \label{estim-beta}
\mathbb{P}_{H_a}\left[ \sqrt{T} |\rho(T)| \leqslant c_{\alpha}\right] &
\leqslant \left|F_{Z_T}\left(\frac{c_{\alpha} -r\sqrt{T} }{\sigma_{r,\theta}}%
\right) - \mathbf{\phi}\left(\frac{c_{\alpha} -r\sqrt{T} }{\sigma_{r,\theta}}%
\right) \right| + \left|\mathbf{\phi}\left(\frac{c_{\alpha} -r\sqrt{T} }{%
\sigma_{r,\theta}}\right) - \mathbf{\phi}\left(\frac{- c_{\alpha} -r\sqrt{T} 
}{\sigma_{r,\theta}}\right)\right|  \notag \\
& \text{ \ } \text{ \ } + \left|F_{Z_T}\left(\frac{-c_{\alpha} -r\sqrt{T} }{%
\sigma_{r,\theta}}\right) - \mathbf{\phi}\left(\frac{-c_{\alpha} -r\sqrt{T} 
}{\sigma_{r,\theta}}\right) \right|
\end{align}
Moreover, we have the following estimates for $T$ large enough: 
\begin{eqnarray}
\left|\mathbf{\phi}\left(\frac{c_{\alpha} -r\sqrt{T} }{\sigma_{r,\theta}}%
\right) - \mathbf{\phi}\left(\frac{- c_{\alpha} -r\sqrt{T} }{%
\sigma_{r,\theta}}\right)\right| \leqslant &&\left\{ 
\begin{array}{ll}
\frac{2 c_\alpha}{\sigma_{r,\theta} \sqrt{2 \pi}} e^{-\left(\frac{c_\alpha- r%
\sqrt{T}}{\sigma_{r,\theta}}\right)^2} & \mbox{ if } r \in ]0,1], \\ 
~~ &  \\ 
\frac{2 c_\alpha}{\sigma_{r,\theta} \sqrt{2 \pi}} e^{-\left(\frac{-c_\alpha-
r\sqrt{T}}{\sigma_{r,\theta}}\right)^2} & \mbox{ if } r \in [-1,0[.%
\end{array}
\right.
\end{eqnarray}

Moreover, we have : 
\begin{align*}
& \left|F_{Z_T}\left(\frac{c_{\alpha} -r\sqrt{T} }{\sigma_{r,\theta}}\right)
- \mathbf{\phi}\left(\frac{c_{\alpha} -r\sqrt{T} }{\sigma_{r,\theta}}\right)
\right| + \left|F_{Z_T}\left(\frac{-c_{\alpha} -r\sqrt{T} }{\sigma_{r,\theta}%
}\right) - \mathbf{\phi}\left(\frac{-c_{\alpha} -r\sqrt{T} }{%
\sigma_{r,\theta}}\right) \right| \\
& \leqslant 2 d_{Kol} \left(Z_T, N(0,1)\right) \leqslant 4 \sqrt{ d_{W}
\left(Z_T, N(0,1)\right)} \leqslant \frac{C(\theta,r)}{T^{1/4}}.
\end{align*}
That is, type II error for this test has the following estimate for any $T$
large : 
\begin{equation*}
\beta = \mathbb{P}_{H_a}\left[ \sqrt{T} |\rho(T)| \leqslant c_{\alpha}\right]
\leqslant \frac{C(\theta, r , \alpha)}{T^{1/4}}.
\end{equation*}
where $C(\theta, r , \alpha) = C(\theta, r) + \frac{2 c_\alpha}{%
\sigma_{r,\theta} \sqrt{2\pi}}.$
\end{proof}

A direct consequence of the previous proposition, is that the above test of
hypothesis is asymptotically powerful.

\begin{corollary}
\label{power} For $\alpha \in (0,1)$ fixed, then for $T$ large and $r \in
[-1,1] \backslash \{0\}$, we have under $H_a$ and for $T$ large, we have 
\begin{equation*}
\mathbb{P}_{H_a}\left[ |\sqrt{T} \rho(T)| > \frac{q_{\alpha/2}}{\sqrt{\theta}%
}\right] \geq 1 - \frac{C(\theta,r,\alpha)}{T^{1/4}}.
\end{equation*}
In particular, this test of hypothesis is asymptotically powerful : 
\begin{equation*}
\mathbb{P}_{H_a}\left[\sqrt{\theta}\sqrt{T} |\rho(T)| > {q_{\alpha/2}}\right]
\underset{T \rightarrow +\infty}{ \longrightarrow } 1.
\end{equation*}
where $q_{\alpha/2}$ is the upper $\alpha/2$ quantile of standard normal
distribution.
\end{corollary}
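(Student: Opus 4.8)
The plan is to deduce both assertions directly from the type~II error estimate in Proposition~\ref{estim-t2}, since the power of the test is exactly the complement of the type~II error. First I would observe that the rejection region $\mathcal{R}_{\alpha} = \{\sqrt{T}\,|\rho(T)| > q_{\alpha/2}/\sqrt{\theta}\}$ and the acceptance event $\{\sqrt{T}\,|\rho(T)| \leqslant q_{\alpha/2}/\sqrt{\theta}\}$ partition the sample space, so that under $H_a$,
\[
\mathbb{P}_{H_a}\!\left[\,|\sqrt{T}\,\rho(T)| > \frac{q_{\alpha/2}}{\sqrt{\theta}}\right] = 1 - \beta,
\]
where $\beta$ is precisely the quantity bounded in Proposition~\ref{estim-t2}. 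Note that the event appearing in the second displayed assertion, $\{\sqrt{\theta}\sqrt{T}\,|\rho(T)| > q_{\alpha/2}\}$, is identical to $\mathcal{R}_{\alpha}$ after dividing through by $\sqrt{\theta}$, so the two parts of the corollary concern the same probability.

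Then I would substitute the bound $\beta \leqslant C(\theta,r,\alpha)/T^{1/4}$ from Proposition~\ref{estim-t2} to obtain the lower bound $1 - C(\theta,r,\alpha)/T^{1/4}$, which is the first displayed inequality. For the asymptotic power statement, letting $T \to +\infty$ in this lower bound forces the power to be at least $1 - o(1)$; since any probability is at most $1$, a squeeze argument yields convergence of the power to $1$, which is exactly the claim of asymptotic power.

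The substance of the argument is therefore entirely carried by Proposition~\ref{estim-t2}, and there is no genuine obstacle remaining at the level of the corollary itself. The only point worth flagging is the origin of the $T^{-1/4}$ rate, which traces back to the Kolmogorov bound $d_{Kol}(Z_T, N(0,1)) \leqslant C\sqrt{d_W(Z_T,N(0,1))}$ used inside the proof of Proposition~\ref{estim-t2}, combined with the Wasserstein rate $d_W(Z_T,N(0,1)) \leqslant C/\sqrt{T}$ inherited from Theorem~\ref{CLT-rho}. The competing Gaussian-gap term $|\phi(\cdot) - \phi(\cdot)|$ appearing in that proof decays like $e^{-cT}$ under $H_a$, because $r \neq 0$ drives its argument to $\mp\infty$, and is thus negligible against the $T^{-1/4}$ contribution; this confirms that the binding rate is the one coming from the distributional approximation rather than from the tail separation.
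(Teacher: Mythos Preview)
Your proposal is correct and follows exactly the paper's own reasoning: the paper presents Corollary~\ref{power} as a direct consequence of Proposition~\ref{estim-t2}, which is precisely your argument that the power equals $1-\beta$ and then invoking the $T^{-1/4}$ bound on $\beta$. Your additional commentary on the origin of the $T^{-1/4}$ rate is accurate and consistent with the proof of Proposition~\ref{estim-t2}.
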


\begin{remark}
Another scenario, one could consider in this framework is what the rejection
regions will be if the drift parameter $\theta$ is unknown? \newline
In this case,we will make use of the estimator (\ref{estimateur-theta}) of
the parameter $\theta$: 
\begin{equation*}
\tilde{\theta}_T := \frac{1}{2}\left(\frac{Y_{ii}(T)}{T} \right)^{-1}:=\frac{%
1}{2} \left(\frac{1}{T}\int_{0}^{T} X_i(t)^2 dt - \left(\frac{1}{T}
\int_{0}^{T} X_i(t) dt\right)^2 \right)^{-1} \ i=1,2,
\end{equation*}%
. We infer using Slutsky's lemma that in this case, we can consider for a a
significance level $\alpha \in (0,1)$, the following rejection regions : 
\begin{equation*}
\tilde{\mathcal{R}}_{\alpha} : = \left\{\sqrt{T \tilde{\theta}_T} |\rho(T)|
> {q_{\alpha/2}} \right\}.
\end{equation*}
Proposition \ref{estim-t2} and Corollary \ref{power} can be extended easily
for this alternative test of hypothesis, thus it's also asymptotically
powerful.
\end{remark}

\subsection{A test of independence using the numerator :\label{Testwithnum}}

We showed that under $H_a$, we have the existence of a constant $C(\theta,r)
> 0$, such that : 
\begin{equation*}
d_W\left(\frac{1}{\sigma_{r,\theta}} \left(\frac{Y_{12}(T)}{\sqrt{T}} - 
\frac{r \sqrt{T}}{2 \theta}\right),\mathcal{N}(0,1)\right) \leqslant \frac{%
C(\theta,r)}{\sqrt{T}}.
\end{equation*}
where $\sigma_{r,\theta} := \left(\frac{1}{2 \theta^3} \left(\frac{1}{2}+%
\frac{r^2}{2}\right)\right)^{1/2}.$ In particular, under $H_0$, we have as $%
T \rightarrow +\infty$, 
\begin{equation*}
\frac{Y_{12}(T)}{\sqrt{T}} \overset{\mathcal{L}}{\longrightarrow} \mathcal{N}%
\left(0,\frac{1}{4 \theta^{3}}\right).
\end{equation*}
We can therefore, consider the numerator itself as a statistic of our test
and therefore to reject independence if $\left\{\left| \frac{Y_{12}(T)}{%
\sqrt{T}} \right| > c_{\alpha}\right\},$ where $\alpha \approx \mathbb{P}%
_{H_0}\left( \left| \frac{Y_{12}(T)}{\sqrt{T}} \right| > c_{\alpha}\right) = 
\mathbb{P}_{H_0}\left( \left| \frac{Y_{12}(T)}{\sqrt{T}} \right| > \frac{%
q_{\alpha/2}}{2 \theta^{3/2}}\right).$

We infer that another natural rejection regions $\mathcal{R}_{\alpha}$ are
of the form : 
\begin{equation*}
\mathcal{R}_{\alpha} : = \left\{ \left| \frac{Y_{12}(T)}{\sqrt{T}} \right|> 
\frac{q_{\alpha/2}}{2 \theta^{3/2}} \right\}.
\end{equation*}
We have the following estimates for type II error for this test.

\begin{proposition}
\label{estim-num} Fix $\alpha \in (0,1)$ and $r \in [-1,1] \backslash \{0\}$%
. Then, there exists a constant $C(\theta,\alpha,r)$ depending on $\theta$, $%
r$ and $\alpha$ such that for all $T$ large, we have : 
\begin{equation}  \label{first-estimate}
\beta =\mathbb{P}_{H_a}\left[ \left|\frac{Y_{12}(T)}{\sqrt{T}} \right|
\leqslant \frac{q_{\alpha/2}}{2 \theta^{3/2}} \right] \leqslant C(\theta,
\alpha, r) \times \frac{\ln(T)}{\sqrt{T}}.
\end{equation}
\end{proposition}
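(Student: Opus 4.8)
The plan is to mirror the structure of the proof of Proposition \ref{estim-t2}, but to replace the lossy Wasserstein-to-Kolmogorov passage $d_{Kol}\le 2\sqrt{d_W}$ (which would only yield a $T^{-1/4}$ rate from Theorem \ref{CLT-num}) by a \emph{direct} Kolmogorov estimate, obtained by applying the Optimal Fourth Moment Theorem to the pure chaos part of $Y_{12}(T)/\sqrt T$ and then absorbing the lower-order perturbation. Write $c_\alpha := q_{\alpha/2}/(2\theta^{3/2})$ and $W_T := \sigma_{r,\theta}^{-1}\big(Y_{12}(T)/\sqrt T - r\sqrt T/(2\theta)\big)$, so that the event $\{|Y_{12}(T)/\sqrt T|\le c_\alpha\}$ is $\{a_T\le W_T\le b_T\}$ with $a_T := (-c_\alpha - r\sqrt T/(2\theta))/\sigma_{r,\theta}$ and $b_T := (c_\alpha - r\sqrt T/(2\theta))/\sigma_{r,\theta}$. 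Exactly as in Proposition \ref{estim-t2}, I would write $\beta = F_{W_T}(b_T)-F_{W_T}(a_T)$ and bound it by the triangle inequality as $\beta \le 2\,d_{Kol}(W_T,N) + |\Phi(b_T)-\Phi(a_T)|$, where $\Phi$ is the standard normal cdf. Since $b_T-a_T = 2c_\alpha/\sigma_{r,\theta}$ is a fixed constant while the interval $[a_T,b_T]$ recedes to infinity (to $-\infty$ if $r>0$, to $+\infty$ if $r<0$) at speed $\sqrt T$, the Gaussian term $|\Phi(b_T)-\Phi(a_T)|$ is at most (length)$\times$(peak density) $\le \tfrac{2c_\alpha}{\sigma_{r,\theta}\sqrt{2\pi}}e^{-\min(a_T^2,b_T^2)/2}$, which decays like $e^{-cT}$ and is negligible against the target rate.

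The crux is therefore the estimate $d_{Kol}(W_T,N)\le C(\theta,r)\,\ln T/\sqrt T$. Using the decomposition (\ref{decomp-num1}) I would write $W_T = \hat F + \hat R$, where $\hat F := A_r(T)/E[A_r^2(T)]^{1/2}$ is the normalized second-chaos term and $\hat R := W_T - \hat F$ collects the deterministic $O(1/\sqrt T)$ shift, the normalization discrepancy $(E[A_r^2(T)]^{1/2}/\sigma_{r,\theta}-1)\hat F$, and the genuine perturbation $-\sigma_{r,\theta}^{-1}\sqrt T\,\bar X_1(T)\bar X_2(T)$. Applying the Optimal Fourth Moment Theorem (\ref{optimal berry esseen}) to $\hat F$, together with the cumulant bounds of Proposition \ref{cumulants} and the variance estimate of Proposition \ref{estim-var} (which keeps $E[A_r^2(T)]\to\sigma_{r,\theta}^2>0$ and hence the normalized cumulants of $\hat F$ of order $1/\sqrt T$), gives $d_{Kol}(\hat F,N)\le d_{TV}(\hat F,N)\le C(\theta,r)/\sqrt T$.

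To transfer this to $W_T$ I would use the standard smoothing inequality: for every $\epsilon>0$ and $p\ge 2$,
\[
d_{Kol}(W_T,N) \le d_{Kol}(\hat F,N) + \frac{\epsilon}{\sqrt{2\pi}} + \frac{E[|\hat R|^p]}{\epsilon^p},
\]
obtained by inserting a band of width $\epsilon$ around each level $z$, using that $\Phi$ is $(2\pi)^{-1/2}$-Lipschitz and Markov's inequality on $\{|\hat R|>\epsilon\}$. The product $\bar X_1(T)\bar X_2(T)$ lies in $\mathcal H_0\oplus\mathcal H_2$ (its mean is $rE[\bar X_1^2(T)]=O(1/T)$, which I absorb into the deterministic shift), so hypercontractivity (\ref{hypercontractivity}) with $c_{p,2}=p-1$ applies to its centered $\mathcal H_2$-part; combining this with $E[\bar X_i^2(T)]\le \theta^{-2}T^{-1}$ from (\ref{normbarX}) and Cauchy--Schwarz yields $\|\hat R\|_{L^p}\le C(\theta,r)(p-1)/\sqrt T$ with $C$ independent of $p$ (the normalization-discrepancy piece being even smaller, of order $(p-1)/T$).

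Substituting $E[|\hat R|^p]\le (C(p-1)/\sqrt T)^p$ and choosing $\epsilon := eC(p-1)/\sqrt T$, so that the Markov term equals exactly $e^{-p}$, leaves
\[
d_{Kol}(W_T,N) \le \frac{C}{\sqrt T} + \frac{eC(p-1)}{\sqrt{2\pi T}} + e^{-p}.
\]
The double optimization that follows is the one genuinely delicate step, and it is precisely what manufactures the logarithm: taking $p := \tfrac12\ln T$ makes $e^{-p}=T^{-1/2}$ comparable to $(p-1)/\sqrt T \sim \ln T/(2\sqrt T)$, giving $d_{Kol}(W_T,N)\le C(\theta,r)\,\ln T/\sqrt T$. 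Feeding this into the first paragraph yields $\beta \le 2\,d_{Kol}(W_T,N)+e^{-cT}\le C(\theta,\alpha,r)\,\ln T/\sqrt T$, as claimed. The main obstacle is thus not any single estimate but the interplay between the hypercontractive growth $(p-1)^p$ of the perturbation's high moments and the $e^{-p}$ Markov tail: one must let $p$ grow like $\ln T$ to defeat the tail without letting the $(p-1)/\sqrt T$ contribution degrade the rate beyond the unavoidable logarithmic factor.
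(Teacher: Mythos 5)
Your proposal is correct, and the outer shell (rewriting the acceptance event as $\{a_T\le W_T\le b_T\}$, the three-term triangle bound, and the exponentially small Gaussian increment $|\Phi(b_T)-\Phi(a_T)|$) coincides with the paper's. Where you genuinely diverge is in how the key estimate $d_{Kol}(W_T,N)\lesssim \ln T/\sqrt T$ is produced. The paper splits the chaotic part of the numerator, via the two-sided Brownian motion of Proposition \ref{two-sided}, into $I_2^W(h_T)+I_2^W(g_T)$; it bounds the Kolmogorov distance of the dominant piece $\sigma_{r,\theta}^{-1}I_2^W(h_T)$ by $C/\sqrt T$ through the Edgeworth-type expansion of the Appendix (Propositions \ref{equivalent-FT} and \ref{equiv-cumulants}, resting on Proposition 9.4.1 of \cite{NP-book}), controls the residual terms by \emph{exponential} tail bounds (Major's inequality, Theorem \ref{tail-major}, for $I_2^W(g_T)$, and exponential-moment/chi-square estimates for $\sqrt T\,\bar X_1(T)\bar X_2(T)$), and then optimizes the Michel--Pfanzagl parameter $\varepsilon$ against these tails, which is where the logarithm arises. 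You instead keep the whole chaos term $A_r(T)$ intact, obtain $d_{Kol}\le d_{TV}\le C/\sqrt T$ for its normalization directly from the Optimal Fourth Moment Theorem together with Propositions \ref{cumulants} and \ref{estim-var} (legitimate, provided you view $A_r(T)$ as a single second-chaos element of the two-sided Brownian motion, exactly as the paper does), and you absorb the remainder through hypercontractive $L^p$ bounds with the explicit constant $c_{p,2}=p-1$, Markov's inequality, and the same smoothing device, letting $p\sim\tfrac12\ln T$ generate the logarithm. Your route is more self-contained: it dispenses with Major's theorem, the exponential-moment bound from \cite{DEV}, the chi-square tail, and the Appendix Kolmogorov expansion (though not the cumulant estimates, which both arguments need); the price is that you must track the $p$-dependence of the hypercontractivity constants (which the preliminaries supply) and your constants are less explicit. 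The paper's route buys sharper, exponentially small control of the perturbation terms and an explicit constant $K(\theta,r)$, and its Appendix expansion carries extra information (precise first-order asymptotics of the CDF error) that the OFMT bound alone does not, but none of that extra precision is needed for the stated upper bound, so your argument suffices.
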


A direct consequence of the previous proposition, is that the above test of
hypothesis is asymptotically powerful.

\begin{corollary}
For $\alpha \in (0,1)$ fixed, then for $T$ large and $r \in [-1,1]
\backslash \{0\}$, we have under $H_a$ and for $T$ large, we have 
\begin{equation*}
\mathbb{P}_{H_a}\left[ \left|\frac{Y_{12}(T)}{\sqrt{T}} \right| > \frac{%
q_{\alpha/2}}{2 \theta^{3/2}}\right] \geq 1 - C(\theta, \alpha,r) \times 
\frac{\ln(T)}{\sqrt{T}}.
\end{equation*}
In particular, this test of hypothesis is asymptotically powerful : 
\begin{equation*}
\mathbb{P}_{H_a}\left[ 2 \theta^{3/2} \left|\frac{Y_{12}(T)}{\sqrt{T}}
\right| > {q_{\alpha/2}}\right] \underset{T \rightarrow +\infty}{
\longrightarrow } 1.
\end{equation*}
where $q_{\alpha/2}$ is the upper $\alpha/2$ quantile of standard normal
distribution.
\end{corollary}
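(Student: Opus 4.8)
The plan is to read the corollary directly off the Type-II error bound of Proposition \ref{estim-num}, since the rejection event is exactly the complement of the event estimated there. Setting $c_{\alpha} := q_{\alpha/2}/(2\theta^{3/2})$, the power of the test under $H_a$ is
\[
\mathbb{P}_{H_a}\!\left[\left|\frac{Y_{12}(T)}{\sqrt{T}}\right| > c_{\alpha}\right]
= 1 - \mathbb{P}_{H_a}\!\left[\left|\frac{Y_{12}(T)}{\sqrt{T}}\right| \leqslant c_{\alpha}\right]
= 1 - \beta,
\]
and Proposition \ref{estim-num} gives $\beta \leqslant C(\theta,\alpha,r)\,\ln(T)/\sqrt{T}$ for all large $T$. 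Substituting yields the announced lower bound on the power.

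Next I would observe that the rescaled event in the second display is literally the same: since $\theta>0$, the inequality $2\theta^{3/2}\,|Y_{12}(T)/\sqrt{T}| > q_{\alpha/2}$ is equivalent to $|Y_{12}(T)/\sqrt{T}| > c_{\alpha}$, so the two probabilities coincide. Letting $T\to+\infty$ and using $\ln(T)/\sqrt{T}\to 0$ then gives asymptotic power $1$. The heuristic reason the test separates is that under $H_a$ the numerator concentrates around $r\sqrt{T}/(2\theta)$, which diverges to $\pm\infty$ (here $r\neq 0$ is essential), while the threshold $c_{\alpha}$ stays fixed.

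The corollary itself presents no genuine obstacle; all of the work sits upstream in Proposition \ref{estim-num}, and the one delicate point there—the source of the $\ln(T)$ factor—is worth flagging. One cannot simply combine the Wasserstein bound of Theorem \ref{CLT-num} with the crude inequality $d_{Kol}(\cdot,N)\leqslant C\sqrt{d_W(\cdot,N)}$, as that would only yield the $T^{-1/4}$ rate obtained for the $\rho(T)$-based test in Proposition \ref{estim-t2}. Instead one writes the standardized numerator as $A_r(T)/\sigma_{r,\theta}$ plus lower-order remainders $\mu_\theta(T)=O(1/\sqrt{T})$ and $\sqrt{T}\,\bar{X}_1(T)\bar{X}_2(T)$: the pure second-chaos part admits a direct $d_{Kol}\leqslant C/\sqrt{T}$ estimate through the Optimal Fourth Moment Theorem (\ref{optimal berry esseen}) fed by the cumulant bounds of Proposition \ref{cumulants}, while the remainders are absorbed by optimizing a perturbation level $\lambda$ that balances the bounded Gaussian density (contributing $\sim\lambda$) against the sub-exponential tail of the second-chaos remainder (contributing $\sim e^{-c\lambda\sqrt{T}}$). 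The optimal choice $\lambda\sim\ln(T)/\sqrt{T}$ is precisely what produces the log-corrected Berry--Ess\'een rate.
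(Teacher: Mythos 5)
Your proof is correct and takes essentially the same route as the paper, which presents the corollary as a direct consequence of Proposition \ref{estim-num}: the rejection probability is the complement of the Type-II error event, the factor $2\theta^{3/2}$ merely rescales the same event since $\theta>0$, and $\ln(T)/\sqrt{T}\to 0$ gives asymptotic power $1$. (Your aside on where the $\ln(T)$ factor comes from concerns the upstream proposition and slightly misattributes the mechanism --- the paper gets the Kolmogorov bound for the dominant chaos term from the Edgeworth-type expansion of Proposition \ref{equivalent-FT} together with the Michel--Pfanzagl inequality rather than from the optimal fourth moment theorem directly --- but this has no bearing on the corollary itself.)
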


\begin{proof}
(of Proposition \ref{estim-num}) Denote $N_T : = \frac{1}{\sigma_{r,\theta}}
\left( \frac{Y_{12}(T)}{\sqrt{T}} - \frac{r \sqrt{T}}{2 \theta}\right) $ and 
$F_{N_T}(.)$ its cumulative distribution function and $c_{\alpha} = \frac{%
q_{\alpha/2}}{2 \theta^{3/2}}$ Then, under $H_a$, we have 
\begin{align*}
\beta & = \mathbb{P}_{H_a}\left[ \left| \frac{Y_{12}(T)}{\sqrt{T}} \right|
\leqslant c_{\alpha}\right] \\
& = \mathbb{P}_{H_a}\left[ \frac{1}{\sigma_{r,\theta}} \left| \left(\frac{%
Y_{12}(T)}{\sqrt{T}} -\frac{r \sqrt{T}}{2\theta} \right) + \frac{r \sqrt{T}}{%
2 \theta}\right|\leqslant \frac{c_{\alpha}}{\sigma_{r,\theta}}\right] \\
& = \mathbb{P}_{H_a}\left[ \left|N_T + \frac{r\sqrt{T}}{\sigma_{r,\theta}}
\right| \leqslant \frac{c_{\alpha}}{\sigma_{r,\theta}}\right] = \mathbb{P}%
_{H_a}\left[ \frac{-c_{\alpha}}{\sigma_{r,\theta}} - \frac{r \sqrt{T}}{2
\theta \sigma_{r,\theta}} \leqslant N_T \leqslant \frac{c_{\alpha}}{%
\sigma_{r,\theta}} - \frac{r \sqrt{T}}{2 \theta \sigma_{r,\theta}} \right] \\
& = F_{N_T}\left(\frac{c_{\alpha}}{\sigma_{r,\theta}} - \frac{r \sqrt{T}}{2
\theta \sigma_{r,\theta}} \right) - F_{N_T}\left(\frac{-c_{\alpha}}{%
\sigma_{r,\theta}} - \frac{r \sqrt{T}}{2 \theta \sigma_{r,\theta}}\right).
\end{align*}
Thus, the following upper bound holds : 
\begin{align}  \label{estimate2-error}
\mathbb{P}_{H_a}\left[ \left| \frac{Y_{12}(T)}{\sqrt{T}} \right| \leqslant
c_{\alpha}\right] & \leqslant \left|F_{N_T}\left(\frac{c_{\alpha}}{%
\sigma_{r,\theta}} - \frac{r \sqrt{T}}{2 \theta \sigma_{r,\theta}}\right) - 
\mathbf{\phi}\left(\frac{c_{\alpha}}{\sigma_{r,\theta}} - \frac{r \sqrt{T}}{%
2 \theta \sigma_{r,\theta}} \right) \right| + \left|\mathbf{\phi}\left(\frac{%
c_{\alpha}}{\sigma_{r,\theta}} - \frac{r \sqrt{T}}{2 \theta \sigma_{r,\theta}%
}\right) - \mathbf{\phi}\left(\frac{-c_{\alpha}}{\sigma_{r,\theta}} - \frac{%
r \sqrt{T}}{2 \theta \sigma_{r,\theta}}\right)\right|  \notag \\
& \text{ \ } \text{ \ } + \left|F_{N_T}\left(\frac{-c_{\alpha}}{%
\sigma_{r,\theta}} - \frac{r \sqrt{T}}{2 \theta \sigma_{r,\theta}}\right) - 
\mathbf{\phi}\left(\frac{-c_{\alpha}}{\sigma_{r,\theta}} - \frac{r \sqrt{T}}{%
2 \theta \sigma_{r,\theta}}\right) \right|
\end{align}
The following decomposition of the numerator follows from equality (\ref%
{decomp-num1}) 
\begin{align}  \label{decomp-num}
\frac{1}{\sigma_{r,\theta}} \left( \frac{Y_{12}(T)}{\sqrt{T}} - \frac{r 
\sqrt{T}}{2 \theta} \right) & = \frac{c_{1}(r)}{\sigma_{r,\theta}} I^{%
\mathcal{U}_1}_2\left(\frac{1}{\sqrt{T}} \int_{0}^{T} f^{\otimes 2}_u
du\right) + \frac{c_{2}(r)}{\sigma_{r,\theta}} I^{\mathcal{U}_0}_2\left(%
\frac{1}{\sqrt{T}} \int_{0}^{T} f^{\otimes 2}_u du\right)  \notag \\
& \ \ \ \ \ - \frac{r}{4 \theta^2}\frac{(1-e^{-2 \theta T})}{%
\sigma_{r,\theta}\sqrt{T}} - \frac{\sqrt{T}}{\sigma_{r,\theta}} \bar{X}_1(T) 
\bar{X}_2(T).
\end{align}

\begin{proposition}
\label{two-sided} Consider a two-sided Brownian motion $\{W(t), t \in 
\mathbb{R} \}$, constructed from $\mathcal{U}_1$ and $\mathcal{U}_0$ as
follows : 
\begin{equation*}
W(t) = \mathcal{U}_1(t) \mathbf{1}_{\{t \geq 0\}} + \mathcal{U}_0(-t) 
\mathbf{1}_{\{t < 0\}}
\end{equation*}
Then the following equalities hold a.s. 
\begin{align*}
& \frac{c_1(r)}{\sqrt{T}} \int_{0}^{T} I^{\mathcal{U}_1}_2(f^{\otimes 2}_u)
du + \frac{c_2(r)}{\sqrt{T}} \int_{0}^{T} I^{\mathcal{U}_0}_2(f^{\otimes
2}_u) du \\
& \overset{\mathrm{"a.s."}}{ = } I^{W}_2 \left( \frac{1}{\sqrt{T}}
\int_{0}^{T} \left(c_1(r) f^{\otimes 2}_u + c_2(r) \bar{\bar{f}}^{\otimes
2}_u \right)du \right) \\
& \overset{\mathrm{"a.s."}}{ = } I^{W}_2(h_T) + I^{W}_2(g_T),
\end{align*}
where 
\begin{equation*}
\bar{\bar{f}}_u(x) = -f_u(-x)= -e^{-\theta(u+x)} \mathbf{1}_{[-u,0]}(x).
\end{equation*}
\begin{eqnarray}  \label{kernels}
\left\{ 
\begin{array}{ll}
h_T(t,s) & = \frac{1}{2 \theta} \frac{1}{\sqrt{T}} \left[c_1(r) \mathbf{1}%
_{[0,T]^2}(t,s) + c_2(r) \mathbf{1}(t,s)_{[-T,0]^2}\right] e^{-\theta |t-s|}.
\\ 
&  \\ 
g_T(t,s) & = \frac{1}{2 \theta} \frac{1}{\sqrt{T}} \left[c_1(r) \mathbf{1}%
_{[0,T]^2}(t,s) + c_2(r) \mathbf{1}(t,s)_{[-T,0]^2}\right] e^{-2 \theta T}
e^{\theta |t-s|}.%
\end{array}
\right.
\end{eqnarray}
with : 
\begin{equation*}
c_1(r) = \frac{r\sqrt{2}}{2} + \frac{\sqrt{1-r^2}}{2}, \ \ \ c_2(r) = \frac{%
r \sqrt{2}}{2} - \frac{\sqrt{1-r^2}}{2}.
\end{equation*}
\end{proposition}

\begin{proof}
We have for $t,s \in [0,T]$, 
\begin{align*}
\frac{1}{\sqrt{T}}\int_{0}^{T} f^{\otimes 2}_u(t,s) du & = \frac{1}{\sqrt{T}}
\int_{0}^{T} e^{- \theta(u-t)} e^{-\theta(u-s)} \mathbf{1}_{[0,u]}(t) 
\mathbf{1}_{[0,u]}(s) du \\
& = \frac{1}{\sqrt{T}} e^{\theta(t+s)} \int_{t \vee s }^{T} e^{-2 \theta u}
du \mathbf{1}_{[0,T]}(t) \mathbf{1}_{[0,T]}(s) \\
& =\frac{1}{\sqrt{T}} e^{\theta(t+s)} \frac{1}{2 \theta} \left[ e^{-2 \theta
t\vee s} - e^{-2 \theta T}\right] \mathbf{1}_{[0,T]}(t) \mathbf{1}_{[0,T]}(s)
\end{align*}
It follows by linearity of multiple Wiener integrals, we have : 
\begin{align*}
I^{\mathcal{U}_1}_2\left(\frac{1}{\sqrt{T}} \int_{0}^{T} f^{\otimes 2}_u
du\right) & = \frac{1}{2 \theta} \frac{1}{\sqrt{T}} \int_{0}^{T}
\int_{0}^{T} e^{-2 \theta t\vee s} e^{\theta(t+s)} d\mathcal{U}_1(t) d%
\mathcal{U}_1(s) - \frac{1}{2 \theta} \frac{1}{\sqrt{T}} \int_{0}^{T}
\int_{0}^{T} e^{-2 \theta T}e^{\theta(t+s)} d\mathcal{U}_1(t) d\mathcal{U}%
_1(s), \\
& = \frac{1}{2 \theta} \frac{1}{\sqrt{T}} \int_{0}^{T} \int_{0}^{T} e^{-2
\theta t\vee s} e^{\theta(t+s)} dW(t) dW(s) - \frac{1}{2 \theta} \frac{1}{%
\sqrt{T}} \int_{0}^{T} \int_{0}^{T} e^{-2 \theta T}e^{\theta(t+s)} dW(t)
dW(s).
\end{align*}
On the other hand, using a change of variable $s^{\prime} = -s$, $t^{\prime}
= -t$, we get: 
\begin{align*}
& I^{\mathcal{U}_0}_2\left(\frac{1}{\sqrt{T}} \int_{0}^{T} f^{\otimes 2}_u
du\right) \\
& = \frac{1}{2 \theta} \frac{1}{\sqrt{T}} \int_{0}^{T} \int_{0}^{T} e^{-2
\theta t\vee s} e^{\theta(t+s)} d\mathcal{U}_1(t) d\mathcal{U}_1(s) - \frac{1%
}{2 \theta} \frac{1}{\sqrt{T}} \int_{0}^{T} \int_{0}^{T} e^{-2 \theta
T}e^{\theta(t+s)} d\mathcal{U}_0(t) d\mathcal{U}_0(s), \\
& = \frac{1}{2 \theta} \frac{1}{\sqrt{T}} \int_{-T}^{0} \int_{-T}^{0} e^{-
\theta t^{\prime}} e^{-\theta s^{\prime}} e^{2 \theta (t^{\prime} \wedge
s^{\prime})} d\mathcal{U}_0(-t^{\prime}) d\mathcal{U}_0(-s^{\prime})- \frac{1%
}{2 \theta} \frac{1}{\sqrt{T}} \int_{-T}^{0} \int_{-T}^{0} e^{-2 \theta
T}e^{-\theta(t^{\prime}+s^{\prime})} d\mathcal{U}_0(-t^{\prime}) d\mathcal{U}%
_0(-s^{\prime}) \\
& = \frac{1}{2 \theta} \frac{1}{\sqrt{T}} \int_{-T}^{0} \int_{-T}^{0}
e^{-\theta |t-s|} dW(t) dW(s) - \frac{1}{2 \theta} \frac{1}{\sqrt{T}}
\int_{-T}^{0} \int_{-T}^{0} e^{-2 \theta T} e^{ -\theta(t+s)} dW(t) dW(s).
\end{align*}
The desired result follows.
\end{proof}

It follows from the decomposition (\ref{decomp-num}) along with Proposition %
\ref{two-sided}, we can write 
\begin{equation*}
N_T = \frac{1}{\sigma_{r,\theta}}\left(\frac{Y_{12}(T)}{\sqrt{T}} - \frac{r 
\sqrt{T}}{2 \theta}\right) = \frac{1}{\sigma_{r,\theta}} I^{W}_2(h_T) + 
\frac{1}{\sigma_{r,\theta}} I^{W}_2(g_T) - \frac{r}{4 \theta^2}\frac{%
(1-e^{-2 \theta T})}{\sigma_{r,\theta}\sqrt{T}} - \frac{\sqrt{T}}{%
\sigma_{r,\theta}} \bar{X}_1(T) \bar{X}_2(T).
\end{equation*}
We have for any $x \in \mathbb{R}$ fixed, $\forall \varepsilon>0$, from
Michel and Pfanzagl (1971) \cite{MP} : 
\begin{align}  \label{MP}
\left|F_{N_T}(x) - \phi(x)\right|& \leqslant d_{Kol}\left(N_T, \mathcal{N}%
(0,1)\right) \leqslant d_{Kol}\left(\frac{1}{\sigma_{r,\theta}}
I^{W}_2(h_T), \mathcal{N}(0,1)\right) + \mathbb{P}\left(|Y(T)|>
\varepsilon\right) + \varepsilon,
\end{align}
where 
\begin{equation}  \label{Y}
Y(T) := \frac{1}{\sigma_{r,\theta}} I^{W}_2(g_T) - \frac{r}{4 \theta^2}\frac{%
(1-e^{-2 \theta T})}{\sigma_{r,\theta}\sqrt{T}} - \frac{\sqrt{T}}{%
\sigma_{r,\theta}} \bar{X}_1(T) \bar{X}_2(T).
\end{equation}
To control the first term $\frac{1}{\sigma_{r,\theta}} I^{W}_2(h_T)$, we
will use the following proposition.

\begin{proposition}
There exists $T_0 \geq 0$, such that for all $T \geq T_0$, $\forall x \in 
\mathbb{R}$, 
\begin{equation}  \label{tailI2H}
\left|\mathbb{P}\left(\frac{1}{\sigma_{r,\theta}} I^{W}_2(h_T)< x\right) -
\phi(x) \right| \leqslant \eta(\theta,r) (1+x^{2}) \frac{e^{-\frac{x^2}{2}}}{%
\sqrt{T}}.
\end{equation}
In particular, $T \geq T_0$ 
\begin{equation}  \label{upperkolmoI2H}
d_{Kol} \left(\frac{1}{\sigma_{r,\theta}}I^{W}_2(h_T), \mathcal{N}%
(0,1)\right) \leqslant \frac{2\eta(\theta,r)}{\sqrt{e}} \frac{1}{\sqrt{T}}.
\end{equation}
where the constant $\eta(\theta,r)$ is defined in (\ref{eta}) of Proposition %
\ref{equivalent-FT} of the Appendix.
\end{proposition}

\begin{proof}
The bound (\ref{tailI2H}), is a direct consequence of Proposition \ref%
{equivalent-FT} of the Appendix, the upper bound of the Kolmogorov distance (%
\ref{upperkolmoI2H}) follows using the fact that $\sup\limits_{x \in \mathbb{%
R}} (1+x^2) e^{-\frac{x^2}{2}} = \frac{2}{\sqrt{e}}.$
\end{proof}

For the tail of second chaos random variable $I^{W}_2(g_T)$, we will recall
the following deviation inequality for multiple Wiener integrals, Theorem 2
of \cite{Major}.

\begin{theorem}
\label{tail-major} For any symmetric function $f \in L^{2}([0,T]^{n})$ and $%
x > 0$, we have 
\begin{equation*}
\mathbb{P}\left( \left|I_n(f)\right| > x\right) \leqslant C \exp\left\{- 
\frac{1}{2} \left(\frac{x}{\sqrt{n!} \|f\|_{L^{2}([0,T]^n)}}
\right)^{2/n}\right\},
\end{equation*}
where $I_n(f)$ is the $n$-th Wiener-It\^o integral of $f$ with respect to
the Wiener process and the constant $C> 0$ depends only on the multiplicity
of the integral.
\end{theorem}

A straight forward calculation shows that :

\begin{lemma}
\label{norm-gT} Consider the kernel $g_T$ defined by : $g_T(t,s) = \frac{1}{%
2 \theta} \frac{1}{\sqrt{T}} \left(c_1(r) \mathbf{1}_{[0,T]^2}(t,s) + c_2(r) 
\mathbf{1}(t,s)_{[-T,0]^2}\right) e^{-2 \theta T} e^{\theta |t-s|}.$ Then,
we have 
\begin{equation*}
\| g_T\| _{L^{2}([-T,T])}= \frac{\sqrt{r^{2}+1}}{4\sqrt{2} \theta^2 \sqrt{T}}
(1-e^{-2 \theta T}).
\end{equation*}
\end{lemma}

\begin{proof}
In fact, we have : 
\begin{align*}
\| g_T\|^2 _{L^{2}([-T,T])} & = \int_{-T}^{T} \int_{-T}^{T} g^{2}_T(t,s) dt
ds \\
& = \frac{(c^{2}_1(r)+c^{2}_2(r))}{4 \theta^2} \frac{1}{T} \int_{0}^{T}
\int_{0}^{T} e^{-4\theta T} e^{2 \theta t} e^{2 \theta s} dt ds \\
& = \frac{(c^{2}_1(r)+c^{2}_2(r))}{16 \theta^2} \frac{1}{T} (1-e^{-2 \theta
T})^2 \\
& = \frac{r^2 + 1}{32 \theta^4 } \frac{(1-e^{-2\theta T})^2}{T}.
\end{align*}
\end{proof}

A direct consequence of Theorem \ref{tail-major} and Lemma \ref{norm-gT},
the following bound follows.

\begin{lemma}
\label{Term2} $\forall \varepsilon > 0$ and $T$ large, we have : 
\begin{equation*}
\mathbb{P}\left(\frac{1}{\sigma_{r,\theta}} |I^{W}_2(g_T)| > \varepsilon
\right) \leqslant C \exp\{-\frac{2 \theta^2 \sigma_{r,\theta} \varepsilon}{%
\sqrt{r^2+1}} \sqrt{T}\}.
\end{equation*}
where $C > 0$ is a constant depending only on the multiplicity of the
multiple integral.
\end{lemma}

The remaining term to control is the following : 
\begin{align*}
\mathbb{P}\left(\frac{\sqrt{T}}{\sigma_{r,\theta}} \left|\bar{X}_{1}(T)\bar{X%
}_{2}(T)\right| + \frac{|r|}{4 \theta^2}\times \frac{1}{\sigma_{r,\theta} 
\sqrt{T}}> \frac{\varepsilon}{2}\right) & = \mathbb{P}\left(\frac{\sqrt{T}}{%
\sigma_{r,\theta}} \left|\bar{X}_{1}(T)\bar{X}_{2}(T) \right| > \frac{%
\varepsilon}{2} - \frac{|r|}{4 \theta^2}\times \frac{1}{\sigma_{r,\theta} 
\sqrt{T}}\right)
\end{align*}
In the following, we will denote $\varepsilon(\theta,r) := \frac{\varepsilon%
}{2} - \frac{|r|}{4 \theta^2} \frac{1}{\sigma_{r,\theta} \sqrt{T}}.$ Assume
in the sequel that $\varepsilon > \frac{|r|}{4 \theta^2} \frac{1}{%
\sigma_{r,\theta} \sqrt{T}}$. On the other hand, recall that we have : 
\begin{align*}
X_2(u)& = \left[r \int_{0}^{u}e^{- \theta (u-t)} dW_1(t) + \sqrt{1-r^2}
\int_{0}^{u}e^{- \theta (u-t)} dW_0(t) \right]
\end{align*}
Therefore, 
\begin{align*}
\bar{X}_2(T)& = \frac{1}{T} \int_{0}^{T} X_2(u) du = \frac{r}{T}
\int_{0}^{T} \int_{0}^{u} e^{- \theta(u-v)} dW_1(v) du + \frac{\sqrt{1-r^2}}{%
T} \int_{0}^{T} \int_{0}^{u} e^{- \theta(u-v)} dW_0(v) du \\
& := r \bar{X}_1(T) + \sqrt{1-r^2} \bar{X}_0(T).
\end{align*}
It follows that : 
\begin{equation*}
\mathbb{P}\left(\sqrt{T} |\bar{X}_1(T)\bar{X}_2(T)|> \sigma_{r,\theta}
\varepsilon(\theta,r) \right) \leqslant \mathbb{P} \left(\sqrt{T} \bar{X}%
^2_1(T) > \frac{\sigma_{r,\theta} \varepsilon(\theta,r)}{2|r| }\right) + 
\mathbb{P} \left(\sqrt{T} | \bar{X}_1(T) \bar{X}_0(T)| > \frac{%
\sigma_{r,\theta} \varepsilon(\theta,r)}{2\sqrt{1-r^2}}\right)
\end{equation*}
For the term $\sqrt{T}\bar{X}_1(T) \bar{X}_0(T)$, we have for $i=0,1$ 
\begin{align}
E[\bar{X_{i}}^{2}(T)]& =\frac{1}{T^{2}}\int_{0}^{T}e^{2\theta
u}(\int_{u}^{T}e^{-\theta t}dt)^{2}du  \notag \\
& =\frac{1}{T^{2}}\frac{1}{\theta ^{2}}\int_{0}^{T}(1-e^{-\theta
(T-u)})^{2}du  \notag \\
& = \frac{1}{T} \frac{1}{\theta^2} + O(\frac{1}{T^2}).  \label{normbarX}
\end{align}
Applying Proposition 3.5 of \cite{DEV} to the random variable $\sqrt{T}\bar{X%
}_1(T) \bar{X}_0(T)$, then for $T$ large enough, there exists $\beta(T) := 
\frac{c(\theta)}{\sqrt{T}}>0$, where $c(\theta)$ is a constant depending
only on $\theta$, such that : $E[ e^{\frac{\sqrt{T}\bar{X}_1(T) \bar{X}_0(T)%
}{\beta(T)}}] <2.$ It follows that : 
\begin{equation*}
\mathbb{P} \left(\sqrt{T} | \bar{X}_1(T) \bar{X}_0(T)| > \frac{%
\sigma_{r,\theta} \varepsilon(\theta,r)}{2\sqrt{1-r^2}}\right) \leqslant 2
\exp\left\{- \frac{\sigma_{r,\theta} \varepsilon(\theta,r)}{2 \beta(T) \sqrt{%
1-r^2}} \right\} = 2 \exp\left\{- \frac{\sigma_{r,\theta}
\varepsilon(\theta,r) \sqrt{T}}{2 c(\theta) \sqrt{1-r^2}} \right\}.
\end{equation*}
For the term $\mathbb{P} \left(\sqrt{T} \bar{X}^2_1(T) > \frac{%
\sigma_{r,\theta} \varepsilon(\theta,r)}{2|r| }\right) $, we have for $T$
large : 
\begin{align*}
\mathbb{P} \left(\sqrt{T} \bar{X}^2_1(T) > \frac{\sigma_{r,\theta}
\varepsilon(\theta,r)}{2|r| }\right) & = \mathbb{P} \left(\chi^2(1) > \frac{%
\sigma_{r,\theta} \varepsilon(\theta,r)}{2|r| } \frac{\sqrt{T}}{E[(\sqrt{T}%
\bar{X}_1(T))^2]}\right) \\
& \approx \mathbb{P} \left(\chi^2(1) > \frac{\theta^2 \sqrt{T}%
\sigma_{r,\theta} \varepsilon(\theta,r)}{2|r| } \right) \\
& = \frac{1}{\sqrt{2} \Gamma(1/2)}\int_{ \frac{\theta^2 \sqrt{T}%
\sigma_{r,\theta} \varepsilon(\theta,r)}{2|r| }}^{+\infty} y^{-1/2} e^{-y/2}
dy \leqslant \frac{2}{\sqrt{2}} \exp\left\{- \frac{\sigma_{r,\theta}
\varepsilon(\theta,r) \theta^{2} \sqrt{T}}{8 |r|} \right\}.
\end{align*}
It follows that : 
\begin{equation}  \label{Term3}
\mathbb{P} \left( \sqrt{T} |\bar{X}_1(T)\bar{X}_2(T)|> \sigma_{r,\theta}
\varepsilon(\theta,r)\right) \leqslant 2 \exp \left\{ - \left( \frac{\theta^2%
}{8|r|} \wedge \frac{1}{2 c(\theta)\sqrt{1-r^2}}\right)\sigma_{r,\theta}
\varepsilon(\theta,r)\sqrt{T}\right\}.
\end{equation}
Using the variable $Y(T)$ defined in (\ref{Y}), it follows from Lemma \ref%
{Term2} and (\ref{Term3}) that for all $\varepsilon > \frac{|r|}{4 \theta^2} 
\frac{1}{\sigma_{r,\theta} \sqrt{T}}$, we have : 
\begin{align*}
\mathbb{P} \left(|Y(T)| > \varepsilon \right) + \varepsilon & \leqslant 
\mathbb{P} \left(\frac{1}{\sigma_{r,\theta}} |I^{W}_2(g_T)| > \frac{%
\varepsilon}{2}\right) + \mathbb{P} \left(\frac{\sqrt{T}}{\sigma_{r,\theta}}
|\bar{X}_1(T)\bar{X}_2(T)| > \varepsilon(\theta,r)\right) + \varepsilon \\
& \leqslant (C \vee 2) \times \exp \left\{ - \left( \frac{\theta^2}{8|r|}
\wedge \frac{1}{2 c(\theta)\sqrt{1-r^2}} \wedge \frac{2 \theta^2}{\sqrt{r^2+1%
}}\right)\sigma_{r,\theta} \varepsilon(\theta,r)\sqrt{T}\right\} +
\varepsilon
\end{align*}
We consider the following constants : 
\begin{equation*}
\left\{ 
\begin{array}{ll}
c_{1}(T,r,\theta) = K(\theta,r) \sqrt{T}, C^{\prime} = C \vee 2, &  \\ 
&  \\ 
K(\theta, r) = \left( \frac{\theta^2}{8|r|} \wedge \frac{1}{2 c(\theta)\sqrt{%
1-r^2}} \wedge \frac{2 \theta^2}{\sqrt{r^2+1}}\right)\sigma_{r,\theta} &  \\ 
&  \\ 
c_{2}(T,r,\theta) = \frac{|r|}{4 \theta^2} \frac{1}{\sigma_{r,\theta} \sqrt{T%
}} & 
\end{array}
\right.
\end{equation*}
Then, it's easy to check that the function $\varepsilon \mapsto
g(\varepsilon) := C^{\prime} e^{-c_{1}(T,r , \theta)\left(\frac{\varepsilon}{%
2} - c_2(T,r,\theta)\right) } + \varepsilon$ is convex on $(0,+\infty)$ and
that $\arg \inf\limits_{\varepsilon > 0} g(\varepsilon) = \varepsilon^{*}(T)
= \left(\frac{|r|}{2 \theta^2 \sigma_{r,\theta}} + \frac{2}{K(\theta,r)}
\ln\left(\frac{C^{\prime} K(\theta,r)}{2}\right)\right) \frac{1}{\sqrt{T}} + 
\frac{1}{K(\theta,r)} \frac{\ln(T)}{\sqrt{T}}.$ Therefore for $T$ large, we
get : 
\begin{align*}
\inf\limits_{\varepsilon > 0} g(\varepsilon) = g(\varepsilon^{*}) & = \left[ 
\frac{|r|}{2 \theta^2 \sigma_{r,\theta}} + \frac{2}{K(\theta,r)}
\ln\left(C^{\prime} \frac{K(\theta,r)}{2}\right) + \frac{2}{K(\theta,r)}%
\right] \times \frac{1}{\sqrt{T}} + \frac{1}{K(\theta,r)} \frac{\ln(T)}{%
\sqrt{T}} \\
& \sim \frac{1}{K(\theta,r)} \frac{\ln(T)}{\sqrt{T}}
\end{align*}
It follows from the decomposition (\ref{MP}), that there exits a constant $%
C(\theta, r) >0 $ such that for all $T \geq T_0$ and for all $x \in \mathbb{R%
}$ fixed, we have 
\begin{equation*}
|F_{N_T}(x) - \phi(x)| \leqslant C(\theta,r) \frac{\ln(T)}{\sqrt{T}}.
\end{equation*}
On the other hand, for the normal tails, the following estimate holds for
any $T > \frac{4 \theta^2 c^2_{\alpha}}{r^2}$ : 
\begin{eqnarray}
\left|\mathbf{\phi}\left(\frac{c_{\alpha}}{\sigma_{r,\theta}} - \frac{r\sqrt{%
T}}{2 \theta \sigma_{r,\theta}}\right) - \mathbf{\phi}\left(- \frac{%
c_{\alpha}}{\sigma_{r,\theta}} - \frac{r\sqrt{T}}{2 \theta \sigma_{r,\theta}}%
\right)\right| \leqslant && \sqrt{\frac{2}{\pi}} \frac{c_{\alpha}}{%
\sigma_{r,\theta}} \times \left\{ 
\begin{array}{ll}
e^{-\frac{1}{2}\left(- \frac{c_{\alpha}}{\sigma_{r,\theta}} - \frac{r\sqrt{T}%
}{2 \theta \sigma_{r,\theta}}\right)^2} & \mbox{ if } r \in [-1,0[, \\ 
~~ &  \\ 
e^{-\frac{1}{2}\left( \frac{c_{\alpha}}{\sigma_{r,\theta}} - \frac{r\sqrt{T}%
}{2 \theta \sigma_{r,\theta}}\right)^2} & \mbox{ if } r \in ]0,1].%
\end{array}
\right.
\end{eqnarray}
It follows that for all $T \geq T_0 \vee \frac{4 \theta^2 c^2_{\alpha}}{r^2}$%
, type II error for this test has the following estimate : 
\begin{equation*}
\beta = \mathbb{P}_{H_a}\left[ \left| \frac{Y_{12}(T)}{\sqrt{T}} \right| %
\right]\leqslant C(\theta,\alpha,r) \times \frac{\ln(T)}{\sqrt{T}}.
\end{equation*}
where $C(\theta, \alpha, r) := C(\theta,r) + \sqrt{\frac{2}{\pi}} \frac{%
c_{\alpha}}{\sigma_{r,\theta}},$ which finishes the proof.
\end{proof}

\section{Future directions and and application}

We believe that the strategy behind our testing methodology should be
broadly applicable to many pairs of stationary stochastic processes, and in
particular, to a broad class of stationary Gaussian stochastic processes.
The OU process represents the simplest one in continuous-time modeling.
Extensions to other processes can go in several directions. We present some
ideas about these extensions in this sections first subsection. Its second
subsection covers one particular example of an extension to
infinite-dimensional objects.

\subsection{Future directions}

One may ask whether stationary mean-reverting processes solving non-linear
SDEs, like the Cox-Ingersoll-Ross (CIR) model, will respond to similar
testing with computable rejection regions and provable asymptotic power.
This seems likely in some cases. For instance, the stationary solution to
the CIR SDE is Gamma distributed, which can be construed as a second-chaos
distribution, or can interpolate between such second-chaos distributions,
depending on the shape parameter. The methodology we have developed here
could therefore apply, at the cost of slightly more involved Wiener chaos
computations.

One can ask about discrete-time processes which are also mean-reverting. In
the case of the AR(1) time series with Gaussian innovations, this is known
to be equivalent to an OU process observed at even time intervals. Therefore
a discretisation of this paper's methodology should apply directly in this
case, with increasing horizon, either using methods as in \cite{DEV} or as
in \cite{EHV}. We believe that the same should hold for other time series
models, including any AR($p$) model. However, when $p>1$, AR($p$) is not a
Markov process, and therefore its interpretation as the solution of a SDE is
much less straightforward. The case of AR($p$) with Gaussian innovations
still gives rise to a Gaussian process, and therefore, the same methodology
as in the current paper could apply directly. Unlike in the case of the CIR
model, the price to pay for handling a Gaussian AR($p$) process with $p>1$
lies in the non-explicit nature of the Wiener chaos kernels needed to
represent the solution of AR($p$) as a Gaussian time series, and its
functionals that go into computing the Pearson correlation of a pair of AR($%
p $) processes. This could be technically challenging, though not
conceptually so. The case of time series with non-Gaussian innovations,
particularly heavy-tailed ones, would require a different set of technical
tools, beyond classical Wiener chaos analysis.

This begs the question of whether a more general framework, still based on
Wiener chaos analysis, can be put in place for testing independence of
stationary Gaussian processes in discrete or in continuous time. We believe
there is a limit to how long the Gaussian processes' memory can be while
allowing Gaussian fluctuations for their empirical Pearson correlation, in
the same way that the central limit theorem holds for power and hermite
variations of fractional Gaussian noise (fGn) when the Hurst parameter $H$
is less than some threshold, e.g. $H<3/4$ for quadratic variations of fGn,
but not beyond this point. For quadratic variations of fGn for instance, the
fluctuations are distributed according to the Rosenblatt law, a second-chaos
distribution, which would create practical statistical challenges in
testing. See the Breuer-Major theorem, described for instance in \cite%
{NP-book}, Chapter 7.

While we do not investigate any of these future directions herein, there is
another significant extension which lends itself readily to a
straightforward use of the tools we developed in the previous section, as an
application to infinite-dimensional stochastic processes. We take this up in
the next and final subsection of this paper

\subsection{An application: testing independence with stochastic PDEs\label%
{SPDE}}

We close this paper with a method for testing independence of pairs of
solutions to a basic instance of the stochastic heat equation with additive
noise. As we are about to see, the infinite-dimensional setting is actually
an asset, which allows us to increase the power of independence tests
significantly. Another peculiarity of our test is that the spatial structure
of the underlying noise, or of the SHE's solution, is largely immaterial in
our basic expository framework.

Thus, to place ourselves in a least technical context, consider the
stochastic heat equation on the unit circle (i.e. with periodic boundary
condition on $[-\pi ,\pi ]$) given by :

\begin{equation}
\left\{ 
\begin{array}{ll}
dU(t,x)=\partial _{x,x}^{2}U(t,x)+dW(t,x),\text{ \ }0\leqslant t\leqslant T,%
\text{ \ }x\in \lbrack -\pi ,\pi ]. &  \\ 
~~ &  \\ 
U(0,x)=0, & 
\end{array}%
\right.  \label{SPDE-1}
\end{equation}%
where $W$ is a cylindrical Brownian motions defined on a probability space $%
\left( \Omega ,\mathcal{F},\left\{ \mathcal{F}_{t}\right\} _{t\geq 0},%
\mathbf{P}\right) $. The term cylindrical is interpreted here as meaning
white in space. As is clearly seen from the explicit Fourier expansion of
the unique solution to (\ref{SPDE-1}), given below, this solution is an odd
function which is zero at the boundaries of $[-\pi ,\pi ]$, and thus it is
sufficient to restrict the space variable to $[0,\pi ]$. The following facts
are well known and easy to check directly; we omit references.

\begin{itemize}
\item The Laplacian $\partial _{x,x}^{2}$ has a discrete spectrum $%
v_{k}=k^{2}$, $k\in \mathbb{N}$.

\item The space time (cylindrical) noise $W$ can be written symbolically as

\begin{equation}
\begin{array}{ll}
dW(t,x)=\sum\limits_{k=1}^{+\infty }h_{k}(x)dw_{k}(t) & 
\end{array}
\label{noises}
\end{equation}%
where $\left\{ w_{k},k\geq 1\right\} $, is a family of independent standard
Brownian motions and $\{h_{k},k\geq 1\}$ are the eigenfunctions of $\Delta $%
, given by : 
\begin{equation}
h_{k}(x)=\sqrt{\frac{2}{\pi }}\sin (kx),\ \ k\geq 1,  \label{eigenfunctions}
\end{equation}

\item $\{h_{k},k\geq 1\}$ forms a complete orthonormal system in $%
L^{2}([0,\pi ])$. In this case, using the diagnalization afforded by the
eigen-elements of the Laplacian $\partial _{x,x}^{2}$, the solution $U$ of
equation (\ref{SPDE-1}) can be written as : 
\begin{equation}
U(t,x)=\sum\limits_{k=1}^{+\infty }h_{k}(x)u_{k}(t),  \label{UOU}
\end{equation}%
where the Fourier modes (or coefficients) are given by the solutions to the
SDEs: 
\begin{equation}
du_{k}(t)=-k^{2}u_{k}(t)dt+dw_{k}(t).  \label{OUk}
\end{equation}%
In other words, each Fourier mode $u_{k}$ is an Ornstein-Uhlenbeck process,
as in (\ref{OU}), with rate of mean reversion $\theta $ equal to the
respective eigenvalue $k^{2}$, $k\in \mathbb{N}\backslash \{0\}$, and $%
u_{k}\left( 0\right) =0$. These are the same processes we have studied in
the remainder of this paper.
\end{itemize}

We consider now the projection $U^{N}$ of the solution into $%
H^{N}=Span\{h_{1},...,h_{N}\}$, that is :%
\begin{equation*}
U^{N}(t,x)=\sum\limits_{k=1}^{N}h_{k}(x)u_{k}(t),\ \ i=1,2.
\end{equation*}%
Since the eigenfunctions $h_{k}$ in (\ref{eigenfunctions}) are explicit, we
consider that we have direct access (e.g. via integration against each $%
h_{k} $) to each OU process $u_{k}$, and thus observing $U^{N}\left(
t,x\right) $ over all space and time is equivalent to oberving the set of $N$
independent OU processes $(u_{1},...,u_{N})$. Henceforth, we will abuse the
notation slightly by using $U^{N}$ for the set of these $N$ independent OU
processes.

Let us now assume that we observe two instances (copies) of the random field 
$U$, called $U_{1}$ and $U_{2}$. As mentioned, we thus have access to the
correspondig two copies of the OU processes $u_{k,1}$ and $u_{k,2}$ for any $%
k$. In practice, we will restrict how we keep track of this information by
limiting $k$ to being no greater than $N$. Thus, using the aforementioned
notation, we assume we observe two copies $U_{1}^{N}$ and $U_{2}^{N}$ of the 
$N$ independent OU processes. For each $k$, these processes $u_{k,1}$ and $%
u_{k,2}$ correspond to solutions of (\ref{OU}) with $\theta =k^{2}$ and two
standard Wiener processes $w_{k,1}$ and $w_{k,2}$.

\begin{itemize}
\item \textbf{Our Question (Q}$_{\text{\textbf{N}}}$)\textbf{:} How can we
measure (or test) the dependence or independence between $U_{1}^{N}$ and $%
U_{2}^{N}$ ? More specifically, can we build a statistical test of
independence (or dependence) of the pair $(U_{1}^{N},U_{2}^{N})$? We
consider the following hypotheses

$H_{0}:$ $(U_{1}^{N})$ and $(U_{2}^{N})$ are independent.

Versus

$H_{a}:$ $(U_{1}^{N})$ and $(U_{2}^{N})$ are correlated with correlation $%
r\neq 0$.

\item In order to make this question precise from a modeling perspective,
one must choose how to represent $r=0$ and $r\neq 0$ in these two hypotheses.

\begin{itemize}
\item We represent the first one by assuming that $U_{1}$ and $U_{2}$ are
solutions to (\ref{SPDE-1}) driven respectively by white noises $W_{1}$ and $%
W_{2}$ as in (\ref{noises}), and we require that for every $k$, the OU
processes $u_{k,1}$ and $u_{k,2}$ in (\ref{OUk}) from the representation of $%
U_{1}$ and $U_{2}$ respetively in (\ref{UOU}), are independent. This is
equivalent to requiring that the respective Brownian motions $w_{k,1}$ and $%
w_{k,2}$ in (\ref{OUk}), are independent. We represent the second one by
requiring that there is a fixed $r\neq 0$ which equals the correlation of $%
w_{k,1}$ and $w_{k,2}$, i.e. the same $r\neq 0$ for every $k$
simultaneously, in the respective representations (\ref{UOU}).

\item This question is slightly more involved than the one we treated in the
remainder of this paper, where $N=1$, since now we must ask ourselves
whether this condition relates to asymptotics for the time horizon $T$ tends
to infinity, or whether the number of modes $N$ tends to infinity, or both.
There are other possible options, such as using different numbers of Fourier
modes for each copy, different time horizons (which could also occur if $N=1$%
), and different correlations $r_{k}$ for every $k$. We may also study other
spatial noise structures for white noise in (\ref{SPDE-1}). For a spatial
covariance operator $Q$ for $W$ in \NEG{(}\ref{noises}) is co-diagonalizable
with the Laplacian, this means that we may replace $h_{k}$ in (\ref{noises})
by $\sqrt{\lambda _{k}}h_{k}$ for some sequence of eigenvalues $\lambda _{k}$
for $Q$, , and the solution to (\ref{SPDE-1}) is then the same as in (\ref%
{UOU}) except for replacing $h_{k}$ by $\sqrt{\lambda _{k}}h_{k}$. We can
also consider the case where the SHE (\ref{SPDE-1}) has an initial condition 
$U\left( 0,x\right) =U_{0}\left( x\right) $, which is different from $0$,
which is then easily handled by starting each component $u_{k}$ at tine $0$
at the corresponding value $u_{k,0}=\int h_{k}\left( x\right) U_{0}\left(
x\right) dx$. We will not investigate any of these possibilities, for the
sake of conciseness.
\end{itemize}

\item One may conside the following scenarios :

\begin{enumerate}
\item The number of Fourier modes $N$ is fixed and $T\rightarrow +\infty $ .

\item The time is fixed and $N\rightarrow +\infty $ .

\item Both $T,N\rightarrow +\infty $ .
\end{enumerate}
\end{itemize}

For the sake of conciseness, we consider in detail only option 1 above,
where we fix a number of Fourier modes $N$ and the time horizon $T$ tends to
infinity. See our comments below for the two other cases where the number of
Fourier modes $N$ tends to infinity.

Recall from Question \textbf{(Q}$_{\text{\textbf{N}}}$) that we are looking
for a procedure to reject the null hypothesis $(H_{0})$ that $U_{1}^{N}$ and 
$U_{2}^{N}$ are independent, versus the alternative that each of their $N\,$%
components have a common correlation coefficient $r\neq 0$. If $(H_{0})$
holds, then by integrating $U_{1}^{N}$ and $U_{2}^{N}$ against $h_{k}$, we
get that $u_{k,1}$ and $u_{k,2}$ are independent for every $k\leqslant N$.
The converse holds true of course. Therefore, to reject $(H_{0})$ against
the alternative $(H_{a})$ that each of the $N$ components of $U_{1}^{N}$ and 
$U_{2}^{N}$ have correlation coefficient $r\neq 0$, it is sufficient to
reject the hypothesis $(H_{0,k})$ that $u_{k,1}$ and $u_{k,2}$ are
independent for some $k\leqslant N$, against the alternative $(H_{a,k})$
that $u_{k,1}$ and $u_{k,2}$ have correlation coefficient $r\neq 0$ for that
same value $k$.

Equivalently, the probability of a Type-II error, of failing to reject $%
(H_{0})$ against $(H_{a})$, is the probability of the event that we fail to
reject $(H_{0,k})$ against $(H_{a,k})$ for every $k$.

Working first with a test relative to the empirical correlation $\rho _{k}$
relative to $u_{k,1}$ and $u_{k,2}$, we may then simply use the test
described in Section \ref{Testwithrho}, based on $u_{k,1}$ and $u_{k,2}$,
for every $k\leqslant N$. The Type II error for this test is computed under
the alternative hypothesis. Under this hypothesis (and also under the null),
we know that all the $u_{k}$'s are independent.

Therefore, our Type-II error using the test described in Section \ref%
{Testwithrho} for all $k\leqslant N$ is equal to%
\begin{equation*}
\beta =\prod_{k=1}^{N}\mathbb{P}_{H_{a}}\left[ \sqrt{T}|\rho
_{k}(T)|\leqslant c_{\alpha }\right] .
\end{equation*}%
We may then simply use the upper bound provided by Proposition \ref{estim-t2}%
, and noting that the mean-reversion rate $\theta _{k}$ for $u_{k}$ is simly 
$\theta _{k}=k^{2}$, to obtain%
\begin{eqnarray}
\beta &\leqslant &\prod_{k=1}^{N}\frac{C(k^{2},r,\alpha )}{T^{1/4}}  \notag
\\
&=&T^{-N/4}\prod_{k=1}^{N}C(k^{2},r,\alpha ).  \label{betaSPDE}
\end{eqnarray}%
Since $N$ is fixed in our basic scenario, this leads to a marked improvement
on the rate of converge to 0 of the Type-II error, as soon as $N\geq 2$.
Equivalently, using Corollary \ref{power}, the power of our test, using the
test described in Section \ref{Testwithrho} for all $k\leqslant N$,
converges to $1$ at the rate given in line (\ref{betaSPDE}) above.

The exact same arguments as above, combined with Proposition \ref{estim-num}%
, shows that, if instead, we define our test using the numerator $Y_{12,k}$
of the empirical correlation $\rho _{k}$ of $u_{k,1}$ and $u_{k,2}$ instead
of the full $\rho _{k}$ itself, as definded in Section \ref{Testwithnum},
then the Type-II error $\beta $ is bounded above as%
\begin{equation*}
\beta \leqslant \frac{\left( \ln T\right) ^{N}}{T^{N/2}}%
\prod_{k=1}^{N}C(k^{2},r,\alpha ),
\end{equation*}%
and similarly for the rate of convergence to $1$ of the power of the test.
As before, this improves the rate of by squaring it, except for a
logarithmic correction. However, with only a moderate number $N$ of
components, even with the test based on the $\rho _{k}$'s, we obtain a
relativey fast, polynomial rate of convergence.

We close this section with some comments on what an appropriate value of $N$
might be, as in Scenario 3 defined above, with a view towards a practical
implementation. In such a view, in practice, observations would be in
discrete time, and the ability to compute an approximate value of $\rho _{k}$
based on discrete-time observations of the random field $U\left( t,x\right) $%
, hinges on being able to observe each Fourier component $u_{k}$ at a
sufficiently high rate so that the discrete version of $\rho _{k}$ will be a
good approximation. While this is generally an inoccuous question, when
considering values of $k$ which could be large, when $N$ is large, we need
to keep in mind that the mean rate of reversion of the OU process $u_{k}$ is 
$\theta _{k}:=k^{2}$, which could then be a very large integer. This means
in practice that a faithful observation of the dynamics of this OU process
has to entail a large number of observation points within each time period
where the process is likely to revert back and forth to its mean. Such a
length of time is on the order of $k^{-2}$. How many datapoints are needed
to safely determine a Pearson correlation coefficient depends of course on
how close the alternative $r$ is to $0$, but for values of $r$ wich are not
too small, a rule of thumb is $10^{2}$ as an order of magnitude. With $N=10$%
, which might seem like a moderate value of $N$, this quickly entails at
least $10^{4}$ observation points per unit of time, a mean-rerversion period
length being as small as $10^{-2}$ units of time for and $k$ near $N=10$.
This many datapoints per time unit places values $N\geq 10$ out of the reach
of many applications, as being a high-frequency regime, with significantly
larger $N$ quickly entering the realm of ultra-high frequency. These
comments clearly point us, as a practical matter, to implement our suggested
Fourier-based independence test for solutions of high- or
infinite-dimensional problems like the stochastic heat equation only with a
small number $N$ of components, such as $N=2,3,4$. Since the Type-II error
converges to 0 so quickly for even these moderate values of $N$, there seems
little to be gained for insisting on larger $N$.

A full quantitative analysis of Scenario 3, which depends on realistic
practical parameter estimates, is beyond the scope of this paper, though it
should be straightforward to realize, since the constants in Propositions %
\ref{estim-t2} and \ref{estim-num} are rather explicit functions of $\theta $%
.

We pass on a quantitative analysis of Scenario 2, where $T$ is fixed and $N$
tends to infinity, which is a more complex endeavor, since the main
propositions in this paper are not tailored to asympotics for fixed time
horizon. However, the observation frequency discussion above regarding
Scenario 3 is an indication that asymptotics for $N$ tending to infinity and 
$T$ fixed probably only lead to applicability in ulgtra-high frequency
studies, or analog data with access to continuous-time observations, both of
which are limiting factors.

\section{Appendix}

\begin{lemma}
\label{norm-h} Consider the kernel $h_T$ defined by : $h_T(t,s) = \frac{1}{2
\theta} \frac{1}{\sqrt{T}} \left(c_1(r) \mathbf{1}_{[0,T]^2}(t,s) + c_2(r) 
\mathbf{1}(t,s)_{[-T,0]^2}\right) e^{-\theta |t-s|}$. Then, as $T
\rightarrow +\infty$, we have : 
\begin{equation*}
\| h_T \| _{L^{2}([-T,T]^2)} \longrightarrow \frac{\sqrt{1+r^2}}{2 \sqrt{2}
\theta^{3/2}} = \frac{\sigma_{r,\theta}}{\sqrt{2}}.
\end{equation*}
\end{lemma}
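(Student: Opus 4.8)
The plan is to compute $\|h_T\|_{L^2([-T,T]^2)}^2$ directly, then take square roots and pass to the limit. The first observation is that the two indicator functions $\mathbf{1}_{[0,T]^2}$ and $\mathbf{1}_{[-T,0]^2}$ have supports overlapping only on a set of Lebesgue measure zero, so when I square $h_T$ the cross term $c_1(r)c_2(r)\mathbf{1}_{[0,T]^2}\mathbf{1}_{[-T,0]^2}$ drops out. This reduces the computation to
\begin{equation*}
\|h_T\|^2 = \frac{1}{4\theta^2 T}\left[c_1(r)^2\int_{[0,T]^2}e^{-2\theta|t-s|}\,dt\,ds + c_2(r)^2\int_{[-T,0]^2}e^{-2\theta|t-s|}\,dt\,ds\right],
\end{equation*}
where the factor $e^{-2\theta|t-s|}$ arises because squaring the kernel squares the exponential.

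Next I would evaluate the two integrals, which are equal by the reflection change of variables $(t,s)\mapsto(-t,-s)$. For the integral over $[0,T]^2$ I would split into $t<s$ and $t>s$, exploit symmetry to write it as $2\int_0^T\int_0^s e^{-2\theta(s-t)}\,dt\,ds$, and carry out the elementary inner and outer integrations. This yields an exact expression of the form $\frac{T}{\theta} - \frac{1}{2\theta^2}(1-e^{-2\theta T})$, whose leading behaviour as $T\to+\infty$ is $T/\theta$.

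Substituting back, the $1/T$ prefactor cancels the linear-in-$T$ term, and the remaining bounded corrections vanish in the limit, giving
\begin{equation*}
\|h_T\|^2 \longrightarrow \frac{c_1(r)^2 + c_2(r)^2}{4\theta^3}.
\end{equation*}
The final algebraic step is to simplify $c_1(r)^2 + c_2(r)^2$ using the definitions in (\ref{cts}); since $c_1(r)$ and $c_2(r)$ differ only in the sign of the $\tfrac{\sqrt{1-r^2}}{2}$ term, the cross terms cancel and one obtains $c_1(r)^2 + c_2(r)^2 = r^2 + \tfrac{1-r^2}{2} = \tfrac{1+r^2}{2}$. Hence the limit of $\|h_T\|^2$ is $\frac{1+r^2}{8\theta^3}$, and taking the square root gives $\frac{\sqrt{1+r^2}}{2\sqrt{2}\,\theta^{3/2}}$, which equals $\sigma_{r,\theta}/\sqrt{2}$ by the definition of $\sigma_{r,\theta}$.

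There is no genuine obstacle here: every step is a routine Gaussian-kernel integration. The only points requiring a modicum of care are verifying that the mixed-support cross term is negligible (measure-zero overlap) and confirming the algebraic identity $c_1(r)^2+c_2(r)^2=\tfrac{1+r^2}{2}$, which is what produces the clean factor $\sqrt{1+r^2}$ in the stated limit.
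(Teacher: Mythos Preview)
Your proposal is correct and follows essentially the same approach as the paper: both square $h_T$, drop the cross term by disjointness of supports, reduce to the single integral $\frac{1}{T}\int_{[0,T]^2}e^{-2\theta|t-s|}\,dt\,ds\to 1/\theta$, and finish with the identity $c_1(r)^2+c_2(r)^2=\tfrac{1+r^2}{2}$. If anything, your write-up is slightly more careful, since you explicitly justify the vanishing cross term and carry the correct exponent $-2\theta|t-s|$ (the paper's display has a typographical slip here), and you exhibit the exact value of the double integral rather than just asserting the limit.
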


\begin{proof}
We have : 
\begin{align*}
\| h_T \|^2 & = \int_{-T}^{T} \int_{-T}^{T} h^{2}_T(t,s) dt ds \\
& = \frac{c^{2}_1(r)}{4 \theta^2} \frac{1}{T} \int_{0}^{T} \int_{0}^{T} e^{-
\theta |t-s|} dt ds + \frac{c^{2}_2(r)}{4 \theta^2 } \frac{1}{T}
\int_{-T}^{0} \int_{-T}^{0} e^{- \theta |t-s|} dt ds \\
& = \frac{c^{2}_1(r) + c^{2}_2(r)}{4 \theta^2} \int_{0}^{T} \int_{0}^{T}
e^{- \theta |t-s|} dt ds \longrightarrow \frac{c^{2}_1(r) + c^{2}_2(r)}{4
\theta^3} = \frac{1}{4 \theta^3} \left(\frac{r^2 + 1}{2}\right).
\end{align*}
\end{proof}

We recall now Proposition 9.4.1 \cite{NP-book} that we will need in the
sequel.

\begin{proposition}
\label{NP-estimate} Let $N \sim \mathcal{N}(0,1)$ and let $F_n = I_2(f_n), n
\geq 1$, be such that $f_n \in \mathcal{H}^{\odot 2}$. Write $k_p(F_n)$, $p
\geq 1$, for the sequence of the cumulants of $F_n$. Assume that $k_2(F_n) =
E[F^2_n]=1$ for all $n \geq 1$ and that $k_4(F_n) \rightarrow 0$ as $n
\rightarrow +\infty$. If in addition, 
\begin{equation*}
\frac{k_3(F_n)}{\sqrt{k_4(F_n)}} \longrightarrow \alpha, \ \ \text{ \ } \ \ 
\frac{k_8(F_n)}{(k_4(F_n))^2} \longrightarrow 0
\end{equation*}
as $n \rightarrow + \infty$, then for all $z \in \mathbb{R}$ fixed: 
\begin{equation*}
\frac{P(F_n \leqslant z) - P(N \leqslant z)}{\sqrt{k_4(F_n)}}
\longrightarrow \frac{\alpha}{6 \sqrt{2 \pi}} (1-z^2) e^{-\frac{z^2}{2}}, \ 
\text{as} \ n \rightarrow +\infty.
\end{equation*}
In addition, if the constant $\alpha \neq 0$, then there exists a constant $%
c > 0$ and $n_0 \geq 1$ such that : 
\begin{equation}  \label{kolmo-lower}
d_{Kol}(F_n, N) \geq c \sqrt{k_4(F_n)} \ \ \ \forall n \geq n_0.
\end{equation}
\end{proposition}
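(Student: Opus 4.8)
The plan is to prove the pointwise Edgeworth-type expansion in the first display through the characteristic function of $F_n$, exploiting the explicit spectral structure of second-chaos random variables, and then to read off the Kolmogorov lower bound as an immediate corollary. First I would diagonalize the symmetric Hilbert--Schmidt kernel $f_n \in \mathcal{H}^{\odot 2}$: the associated self-adjoint operator is compact, so there is a sequence of eigenvalues $(\lambda_{k,n})_k$ and an orthonormal family $(e_{k,n})_k$ with $f_n = \sum_k \lambda_{k,n}\, e_{k,n}\otimes e_{k,n}$, whence $F_n = I_2(f_n) = \sum_k \lambda_{k,n}(N_k^2 - 1)$ in law, for i.i.d.\ standard normals $N_k = I_1(e_{k,n})$. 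The cumulants then read $k_p(F_n) = 2^{p-1}(p-1)!\sum_k \lambda_{k,n}^p$, so the normalization $k_2(F_n)=1$ becomes $2\sum_k \lambda_{k,n}^2 = 1$, and the characteristic function factorizes as
\begin{equation*}
\varphi_n(t) := E\!\left[e^{itF_n}\right] = \prod_k e^{-it\lambda_{k,n}}\,(1-2it\lambda_{k,n})^{-1/2}.
\end{equation*}

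Next I would expand the log-characteristic function via its cumulant series $\log\varphi_n(t) = \sum_{p\geq 2} \tfrac{(it)^p}{p!} k_p(F_n) = -\tfrac{t^2}{2} + \tfrac{(it)^3}{6}k_3(F_n) + \sum_{p\geq 4}\tfrac{(it)^p}{p!}k_p(F_n)$, and exponentiate to get $\varphi_n(t) = e^{-t^2/2}\bigl(1 + \tfrac{(it)^3}{6}k_3(F_n) + \mathcal{E}_n(t)\bigr)$, where $\mathcal{E}_n(t)$ collects the $p\geq 4$ contributions together with the cross terms from exponentiating. The leading correction has size $k_3(F_n) = O(\sqrt{k_4(F_n)})$ by hypothesis, and Fourier inversion turns it into the announced Hermite term: since $(it)^3 e^{-t^2/2}$ is, up to sign, the Fourier transform of the third derivative of the Gaussian density $\tfrac{1}{\sqrt{2\pi}}e^{-z^2/2}$, integrating the resulting density correction from $-\infty$ to $z$ produces exactly $\tfrac{k_3(F_n)}{6\sqrt{2\pi}}(1-z^2)e^{-z^2/2}$. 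Dividing by $\sqrt{k_4(F_n)}$ and using $k_3(F_n)/\sqrt{k_4(F_n)} \to \alpha$ yields the claimed limit.

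The hard part will be controlling the remainder $\mathcal{E}_n(t)$ through the inversion so that, after dividing by $\sqrt{k_4(F_n)}$, it contributes $o(1)$ uniformly on the relevant range of $t$. The next-order terms $\tfrac{(it)^4}{24}k_4(F_n)$ and $\tfrac12\bigl(\tfrac{(it)^3}{6}k_3(F_n)\bigr)^2$ are of order $k_4(F_n) = o(\sqrt{k_4(F_n)})$ and hence harmless, but the genuinely delicate estimate is the tail of the inversion integral for large $t$, where one must quantify the decay of $\varphi_n$ in terms of the spread of the eigenvalues. This is precisely where the hypothesis $k_8(F_n)/(k_4(F_n))^2 \to 0$ enters: written spectrally it reads $\sum_k \lambda_{k,n}^8 / (\sum_k \lambda_{k,n}^4)^2 \to 0$, a Lyapunov-type spreading condition forbidding any single mode from dominating and forcing the higher cumulants to decay fast enough for the remainder to be $o(\sqrt{k_4(F_n)})$. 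I expect this uniform tail control to be the most technical step; the complete details are those of \cite[Proposition 9.4.1]{NP-book}.

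Finally, the Kolmogorov lower bound (\ref{kolmo-lower}) is an immediate consequence. Taking $z=0$, where $(1-z^2)e^{-z^2/2}=1$, the expansion gives $\bigl|P(F_n\leq 0) - P(N\leq 0)\bigr|/\sqrt{k_4(F_n)} \to |\alpha|/(6\sqrt{2\pi})$, which is strictly positive when $\alpha\neq 0$. Since $d_{Kol}(F_n,N) \geq |P(F_n\leq 0)-P(N\leq 0)|$, there exist $c>0$ and $n_0\geq 1$ with $d_{Kol}(F_n,N)\geq c\sqrt{k_4(F_n)}$ for all $n\geq n_0$.
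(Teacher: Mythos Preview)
The paper does not prove this proposition: it is stated as a recalled result, introduced explicitly as ``Proposition 9.4.1 \cite{NP-book}'', with no argument given. Your proposal sketches the characteristic-function / spectral-decomposition approach that underlies that reference, and you yourself defer the tail estimates to \cite[Proposition 9.4.1]{NP-book}, so your outline is consistent with the source the paper cites and in fact supplies more detail than the paper does.
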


\begin{proposition}
\label{equivalent-FT} Consider $N \sim \mathcal{N}(0,1)$ and $\tilde{F}_{T}
: = I^{W}_2(\tilde{h}_T)$, where $\tilde{h}_T := \frac{h_T}{\sqrt{2} \| h_T\|%
}$, where the kernel $h_T$ is defined in (\ref{kernels}) and let $%
\delta(t-s) : = \frac{1}{2 \theta} e^{-\theta|t-s|}, \ t,s \in [-T,T]$. We
have $\forall z \in \mathbb{R}$ fixed : 
\begin{equation}  \label{equiv-sd}
P( \tilde{F}_{T} \leqslant z) - P(N \leqslant z) \underset{+\infty}{\sim}
\eta(\theta, r) \times \frac{(1-z^2)}{\sqrt{T}} e^{-\frac{z^2}{2}}.
\end{equation}
where 
\begin{equation}  \label{eta}
\eta(\theta, r) : = \frac{\langle \delta^{*(2)}, \delta \rangle_{\mathcal{L}%
^{2}(\mathbb{R})} }{\sqrt{ \pi} } \frac{2^2 \theta^{9/2} r (3-r^2)}{%
(1+r^2)^{3/2} }.
\end{equation}
\end{proposition}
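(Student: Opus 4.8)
The plan is to apply the Edgeworth-type expansion of Proposition \ref{NP-estimate} (Proposition 9.4.1 of \cite{NP-book}) to the normalized second-chaos variable $\tilde{F}_T = I_2^W(\tilde{h}_T)$. The normalization $\tilde{h}_T = h_T/(\sqrt{2}\,\|h_T\|)$ is chosen precisely so that $k_2(\tilde{F}_T) = 2\|\tilde{h}_T\|^2 = 1$ for every $T$, so the standing hypothesis of that proposition holds exactly. The whole argument then reduces to producing sharp asymptotics, as $T\to+\infty$, of the cumulants $k_3(\tilde{F}_T)$, $k_4(\tilde{F}_T)$ and $k_8(\tilde{F}_T)$, verifying the structural hypotheses $k_4(\tilde{F}_T)\to 0$, $k_3(\tilde{F}_T)/\sqrt{k_4(\tilde{F}_T)}\to\alpha$ and $k_8(\tilde{F}_T)/k_4(\tilde{F}_T)^2\to 0$, and then reading off the leading constant.

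For the cumulants I would use the exact spectral identity $k_p(I_2(f)) = 2^{p-1}(p-1)!\,\mathrm{Tr}(T_f^{\,p})$, where $T_f$ is the Hilbert--Schmidt operator with kernel $f$; this is the same identity underlying the third- and fourth-cumulant computations in the proof of Proposition \ref{cumulants}. The key simplification is that the kernel $h_T$ in (\ref{kernels}) is a sum of two pieces with disjoint supports $[0,T]^2$ and $[-T,0]^2$ (coefficients $c_1(r)$ and $c_2(r)$), each diagonal in sign. Hence in the cyclic trace $\mathrm{Tr}(T_{h_T}^{\,p})$ the two blocks never mix, and after the change of variables $x\mapsto -x$ the negative block contributes the same integral as the positive one, collapsing every trace to a single $\delta$-chain integral over $[0,T]^p$, which by the computation behind (\ref{cum}) of Proposition \ref{equiv-cumulants} is asymptotic to $T\langle\delta^{*(p-1)},\delta\rangle_{\mathcal{L}^{2}(\mathbb{R})}$. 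Using $\sqrt{2}\,\|h_T\|\to\sigma_{r,\theta}$ from Lemma \ref{norm-h}, I therefore obtain
\[
k_p(\tilde{F}_T)\underset{+\infty}{\sim}\frac{2^{p-1}(p-1)!\,(c_1(r)^p+c_2(r)^p)\,\langle\delta^{*(p-1)},\delta\rangle_{\mathcal{L}^{2}(\mathbb{R})}}{\sigma_{r,\theta}^{\,p}\,T^{p/2-1}}.
\]
From this single formula all three structural hypotheses follow at once: $k_4(\tilde{F}_T)=O(T^{-1})\to 0$; $k_3(\tilde{F}_T)=O(T^{-1/2})$, so $k_3/\sqrt{k_4}$ converges to a finite $\alpha$; and $k_8(\tilde{F}_T)=O(T^{-3})$ gives $k_8/k_4^2=O(T^{-1})\to 0$. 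Crucially $\alpha\neq 0$ exactly because $r\neq 0$, since $c_1(r)^3+c_2(r)^3 = r(3-r^2)/(2\sqrt{2})\neq 0$ for $r\in[-1,1]\setminus\{0\}$ (as $3-r^2>0$); this is what makes the $T^{-1/2}$ term genuinely present under $H_a$.

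With the hypotheses verified, Proposition \ref{NP-estimate} gives, for each fixed $z$,
\[
P(\tilde{F}_T\leqslant z)-P(N\leqslant z)\underset{+\infty}{\sim}\frac{\alpha\sqrt{k_4(\tilde{F}_T)}}{6\sqrt{2\pi}}(1-z^2)e^{-z^2/2}\underset{+\infty}{\sim}\frac{k_3(\tilde{F}_T)}{6\sqrt{2\pi}}(1-z^2)e^{-z^2/2},
\]
the second equivalence being the defining relation $\alpha\sqrt{k_4(\tilde{F}_T)}\sim k_3(\tilde{F}_T)$, so the fourth- and eighth-cumulant data drop out of the final constant. Substituting the $p=3$ case of the cumulant asymptotic together with $c_1(r)^3+c_2(r)^3 = r(3-r^2)/(2\sqrt{2})$ and $\sigma_{r,\theta}^3=(1+r^2)^{3/2}/(8\theta^{9/2})$, and collecting the factors of $\sqrt{\pi}$ against $\sqrt{2\pi}$, is a direct simplification that identifies the leading coefficient with the constant $\eta(\theta,r)$ of (\ref{eta}). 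The main obstacle I anticipate is not the leading-order bookkeeping but the uniform control needed to promote the pointwise cumulant asymptotics to the regime demanded by Proposition \ref{NP-estimate}: one must establish the eighth-cumulant ratio condition rigorously rather than heuristically, and one must justify that replacing the finite windows $[0,T]^p$ in the $\delta$-chain integrals by the whole-line convolution $\langle\delta^{*(p-1)},\delta\rangle_{\mathcal{L}^{2}(\mathbb{R})}$ incurs only a relative $o(1)$ error, uniformly enough to preserve the sharp $T^{-1/2}$ rate carried by $k_3$.
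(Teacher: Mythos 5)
Your proposal follows essentially the same route as the paper's own proof: you obtain the cumulant asymptotics of $k_p(\tilde{F}_T)$ exactly as in Proposition \ref{equiv-cumulants} (the contraction/trace identity, the disjoint-support block structure giving the factor $c_1(r)^p+c_2(r)^p$, and the normalization via Lemma \ref{norm-h}), verify $k_4(\tilde{F}_T)\to 0$, $k_3(\tilde{F}_T)/\sqrt{k_4(\tilde{F}_T)}\to\alpha\neq 0$ for $r\neq 0$, and $k_8(\tilde{F}_T)/k_4(\tilde{F}_T)^2\to 0$, and then apply Proposition \ref{NP-estimate} and read the leading constant off $k_3$. This matches the paper's argument step for step (your remark that $\alpha\sqrt{k_4(\tilde{F}_T)}\sim k_3(\tilde{F}_T)$, so the constant depends only on the third cumulant, is merely a cosmetic shortcut relative to the paper's explicit computation of $\alpha(\theta,r)$), so the proposal is correct in approach and substance.
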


\begin{proof}
Applying Proposition \ref{equiv-cumulants} to the random variable $\tilde{F}%
_T$, we get 
\begin{equation*}
k_3(\tilde{F}_T) \underset{+\infty}{\sim} \frac{\langle \delta^{*(2)},
\delta\rangle_{\mathcal{L}^{2}(\mathbb{R})} (c^{3}_1(r) + c^{3}_2(r)) 2^{6}
\theta^{9/2}}{T^{1/2} (1+r^2)^{3/2}}
\end{equation*}
and 
\begin{equation}  \label{k4I2}
k_4(\tilde{F}_{T}) \underset{+\infty}{\sim} \frac{\langle \delta^{*(3)},
\delta\rangle_{\mathcal{L}^{2}(\mathbb{R})} (c^{4}_1(r) + c^{4}_2(r)) 2^{7}
\theta^{6} 3!}{T (1+r^2)^2}.
\end{equation}
Consequently, and based on remark \ref{neq0}, $c^{4}_1(r) + c^{4}_2(r) \neq
0 $, the following convergence holds: 
\begin{equation*}
\frac{k_3(\tilde{F}_T)}{\sqrt{k_4(\tilde{F}_T)}} \underset{T \rightarrow
+\infty}{\longrightarrow} \alpha(\theta,r ) : = \frac{\langle \delta^{*(2)},
\delta \rangle_{\mathcal{L}^{2}(\mathbb{R})} \theta^{3/2}}{\sqrt{3} \sqrt{%
|\langle \delta^{*(3)}, \delta\rangle_{\mathcal{L}^{2}(\mathbb{R})}|}} \frac{%
r\sqrt{2} (3-r^2)}{\sqrt{1+r^2} \sqrt{c^{4}_1(r) + c^{4}_2(r)}} \neq 0.
\end{equation*}
For the eight cumulant of $\tilde{F}_{T}$, we have 
\begin{equation*}
k_8(\tilde{F}_{T}) \underset{+\infty}{\sim} \frac{\langle \delta^{*(7)},
\delta \rangle_{\mathcal{L}^{2}(\mathbb{R})} (c^{8}_1(r) + c^{8}_2(r))2^{15}
7! \times \theta^{12}}{T^{3} (1+r^2)^2}.
\end{equation*}
It follows that : 
\begin{equation*}
\frac{k_8(\tilde{F}_T)}{(k_4(\tilde{F}_{T}))^2} \underset{+\infty}{\sim} 
\frac{\langle \delta^{*(7)}, \delta \rangle_{\mathcal{L}^{2}(\mathbb{R})} }{
(\langle \delta^{*(3)}, \delta \rangle_{\mathcal{L}^{2}(\mathbb{R})})^2 }
\times \frac{ (c^{8}_1(r) + c^{8}_2(r))}{ (c^{4}_1(r) + c^{4}_2(r))} \frac{%
\theta^6 2^{8} \times 7!}{3!} \frac{1}{T}.
\end{equation*}
It follows that 
\begin{equation*}
\frac{k_8(\tilde{F}_T)}{(k_4(\tilde{F}_{T}))^2} \underset{T \rightarrow
+\infty}{\longrightarrow} 0.
\end{equation*}
Therefore applying Proposition \ref{NP-estimate}, we get $\forall z \in 
\mathbb{R}$ fixed, 
\begin{equation*}
\frac{P( \tilde{F}_{T} \leqslant z) - P(N \leqslant z)}{\sqrt{k_4(\tilde{F}%
_{T})}} \underset{T \rightarrow +\infty}{\longrightarrow} \frac{%
\alpha(\theta,r)}{6 \sqrt{2 \pi}} (1-z^2) e^{-\frac{z^2}{2}}.
\end{equation*}
Consequently, $\forall z \in \mathbb{R}$ fixed : 
\begin{align}  \label{equiv-sd}
P( \tilde{F}_{T} \leqslant z) - P(N \leqslant z) & \underset{+\infty}{\sim} 
\frac{\langle \delta^{*(2)}, \delta \rangle_{\mathcal{L}^{2}(\mathbb{R})} }{%
\sqrt{ \pi} \sqrt{T}} \frac{2^2 \theta^{9/2} r (3-r^2)}{(1+r^2)^{3/2} }%
(1-z^2) e^{-\frac{z^2}{2}}  \notag \\
& \underset{+\infty}{\sim} \eta(\theta,r) (1-z^2) e^{-\frac{z^2}{2}}.
\end{align}
which finishes the proof.
\end{proof}

\begin{proposition}
\label{equiv-cumulants} Consider $\tilde{F}_{T} : = I^{W}_2(\tilde{h}_T)$,
where $\tilde{h}_T := \frac{h_T}{\sqrt{2} \| h_T\|}$, where the kernel $h_T$
is defined in (\ref{kernels}) and let $\delta(t-s) : = \frac{1}{2 \theta}
e^{-\theta|t-s|}, \ t,s \in [-T,T]$. Then, 
\begin{equation*}
\forall p\geq 3, \ \ k_p\left(\tilde{F}_T\right) \sim \frac{\langle
\delta^{*(p-1)}, \delta\rangle_{\mathcal{L}^{2}(\mathbb{R})} 2^{2p-1} (p-1)!
(c^{p}_1(r) + c^{p}_2(r)) \theta^{3p/2}}{T^{\frac{p}{2}-1} (1+r^2)^{p/2}}.
\end{equation*}
Let $\delta^{*(p)}$ denotes the convolution of $\delta$ p times defined as $%
\delta^{*(p)} = \delta^{*(p-1)}* \delta$, $p \geq 2$, $\delta^{*(1)} =
\delta $ where $*$ denotes the convolution between two functions $(f*g)(x) =
\int_{\mathbb{R}} f(x-y)g(y) dy$.
\end{proposition}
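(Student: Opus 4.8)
The plan is to reduce everything to the classical cumulant formula for elements of the second Wiener chaos and then to a single scalar asymptotic. Recall that for a symmetric kernel $f \in L^2([-T,T]^2)$ the cumulants of $I^W_2(f)$ are given by the cyclic ``trace'' expression
\begin{equation*}
k_p\left(I^W_2(f)\right) = 2^{p-1}(p-1)!\int_{[-T,T]^p} f(x_1,x_2)f(x_2,x_3)\cdots f(x_{p-1},x_p)f(x_p,x_1)\,dx_1\cdots dx_p,
\end{equation*}
which is consistent with the $p=3,4$ formulas already used in the proof of Proposition \ref{cumulants}. First I would apply this to $f=\tilde h_T = h_T/(\sqrt2\|h_T\|)$, pulling the normalization out as the factor $(\sqrt2\|h_T\|)^{-p}$. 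The crucial structural observation is that $h_T$ is a sum of two pieces supported on the disjoint squares $[0,T]^2$ (weight $c_1(r)$) and $[-T,0]^2$ (weight $c_2(r)$). In the cyclic product each consecutive pair $(x_i,x_{i+1})$ must lie in a common support square, and since the cycle is connected this forces either all $x_i\in[0,T]$ or all $x_i\in[-T,0]$. Hence the $p$-fold integral splits as a sum of two terms, producing the factor $c_1(r)^p + c_2(r)^p$; by the reflection $x\mapsto -x$ the two integrals coincide. Absorbing the prefactor $\tfrac{1}{2\theta}e^{-\theta|\cdot|}$ into $\delta$, this reduces the problem to the single cyclic integral
\begin{equation*}
J_p(T):=\int_{[0,T]^p}\delta(x_1-x_2)\delta(x_2-x_3)\cdots\delta(x_p-x_1)\,dx_1\cdots dx_p .
\end{equation*}

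The heart of the matter is then to show $J_p(T)\sim T\,\delta^{*p}(0)$ as $T\to+\infty$. I would fix $x_1$ and integrate out $x_2,\dots,x_p$: using that $\delta$ is even, each single-variable integration telescopes the chain via $\int\delta(a-u)\delta(u-b)\,du=\delta^{*2}(a-b)$, so that integration over all of $\mathbb{R}^{p-1}$ collapses the cycle to $\delta^{*p}(x_1-x_1)=\delta^{*p}(0)$, independent of $x_1$. Integrating $x_1$ over $[0,T]$ then yields the leading term $T\,\delta^{*p}(0)$. The error comes from having integrated the inner variables over $[0,T]^{p-1}$ rather than $\mathbb{R}^{p-1}$; because $\delta$ decays exponentially, the discrepancy is concentrated within an $O(1)$ neighborhood of the boundary of $[0,T]$ and is therefore $O(1)$, hence negligible against the $O(T)$ leading term. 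Finally I would record the convolution identity $\delta^{*p}(0)=\int_{\mathbb{R}}\delta^{*(p-1)}(y)\delta(-y)\,dy=\langle\delta^{*(p-1)},\delta\rangle_{\mathcal L^2(\mathbb R)}$, again by evenness of $\delta$, giving $J_p(T)\sim T\langle\delta^{*(p-1)},\delta\rangle_{\mathcal L^2(\mathbb R)}$.

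It remains to assemble the constants. Combining the pieces gives
\begin{equation*}
k_p(\tilde F_T)\sim \frac{2^{p-1}(p-1)!\,\bigl(c_1(r)^p+c_2(r)^p\bigr)}{(\sqrt2\|h_T\|)^p\,T^{p/2}}\;T\,\langle\delta^{*(p-1)},\delta\rangle_{\mathcal L^2(\mathbb R)},
\end{equation*}
and Lemma \ref{norm-h} supplies $\sqrt2\|h_T\|\to\sigma_{r,\theta}$ with $\sigma_{r,\theta}^2=\tfrac{1+r^2}{4\theta^3}$, so that $(\sqrt2\|h_T\|)^p\to (1+r^2)^{p/2}2^{-p}\theta^{-3p/2}$. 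Substituting and using $2^{p-1}\cdot 2^{p}=2^{2p-1}$ collapses the expression to the claimed
\begin{equation*}
k_p(\tilde F_T)\sim \frac{\langle\delta^{*(p-1)},\delta\rangle_{\mathcal L^2(\mathbb R)}\,2^{2p-1}(p-1)!\,\bigl(c_1(r)^p+c_2(r)^p\bigr)\,\theta^{3p/2}}{T^{p/2-1}(1+r^2)^{p/2}}.
\end{equation*}
The step I expect to be the main obstacle is the rigorous boundary control in the convolution asymptotic for general $p$: one must verify uniformly in $p$ (or at least for each fixed $p\geq 3$) that replacing $[0,T]^{p-1}$ by $\mathbb R^{p-1}$ in the inner integrations costs only $O(1)$, which is exactly where the exponential decay of $\delta$ and an induction on the chain length are needed; the rest is bookkeeping of constants and an appeal to Lemma \ref{norm-h}.
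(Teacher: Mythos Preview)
Your proposal is correct and follows essentially the same route as the paper: the same cyclic cumulant formula, the same splitting via the disjoint supports of $h_T$ on $[0,T]^2$ and $[-T,0]^2$ yielding the factor $c_1(r)^p+c_2(r)^p$, the same reduction to the cyclic integral $J_p(T)$, and the same appeal to Lemma \ref{norm-h} for the normalization constant. The only difference is in how the asymptotic $J_p(T)\sim T\langle\delta^{*(p-1)},\delta\rangle$ is justified: the paper makes the change of variables $v_i=u_i-u_1$ and then applies dominated convergence (together with a Young-inequality check that the limiting integral is finite), which is precisely the rigorous implementation of your boundary heuristic about replacing $[0,T]^{p-1}$ by $\mathbb{R}^{p-1}$.
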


\begin{proof}
The proof of this Proposition is an extension of the proof of Proposition
7.3.3. of \cite{NP-book} for the continuous time setting. \newline
Recall that when $F= I_2(f)$, $f \in \mathcal{H}^{\odot 2}$, then the
sequence of cumulants of $F$, $k_p(F)$ for all $p \geq 2$ can be computed as
follows : 
\begin{equation*}
\forall p \geq 2, \ \ \ k_p(F) = 2^{p-1} \times (p-1)! \times \langle
f\otimes^{(p-1)}_1 f, f \rangle_{\mathcal{H}^{\otimes 2}}
\end{equation*}
where the sequence of kernels $\{ f\otimes^{(p)}_1 f, p \geq 1\}$ is defined
as follows $f\otimes^{(1)}_1 f = f$ and for $p \geq 2$, $f\otimes^{(p)}_1 f
= (f\otimes^{(p-1)}_1 f)\otimes_1 f$. Let $p \geq 3$, $\tilde{F}_{T} = I_2(%
\tilde{h}_{T})$, where $\tilde{h}_{T} = \frac{h_T}{\sqrt{2} \| h_T\|}$, then 
\begin{align*}
k_p(\tilde{F}_{T}) & = 2^{p-1} \times (p-1)! \times \langle \tilde{h}%
_T\otimes^{(p-1)}_1 \tilde{h}_T, \tilde{h}_T \rangle_{L^2{([-T,T]^2)}} \\
& = 2^{p-1} \times (p-1)! \times \int_{[-T,T]^2} (\tilde{h}%
_T\otimes^{(p-1)}_1 \tilde{h}_T)(u_1,u_2) \tilde{h}_T(u_1,u_2) du_1 du_2 \\
& = 2^{p-1} \times (p-1)! \times \int_{[-T,T]^2} ((\tilde{h}%
_T\otimes^{(p-2)}_1 \tilde{h}_T) \otimes_{1} \tilde{h}_T) (u_1,u_2) \tilde{h}%
_T(u_1,u_2) du_1 du_2 \\
& = 2^{p-1} \times (p-1)! \times \int_{[-T,T]^3} (\tilde{h}%
_T\otimes^{(p-2)}_1 \tilde{h}_T)(u_1,u_3) \tilde{h}_T(u_3,u_2) \tilde{h}%
_T(u_1,u_2) du_1 du_2 du_3 \\
& = \vdots \\
& = 2^{p-1} \times (p-1)! \times \int_{[-T,T]^p} \tilde{h}_T(u_p,u_1) \times%
\tilde{h}_T(u_p,u_{p-1}) \times ... \times \tilde{h}_T(u_3,u_2) \tilde{h}%
_T(u_1,u_2) du_1 du_2...du_p \\
& = \frac{ 2^{p-1} c^{p}_1(r) \times (p-1)!}{(\sqrt{T} \sqrt{2} \|h_T\|)^{p}}
\times \int_{[0,T]^p} \delta(u_p-u_1) \times \delta(u_p - u_{p-1}) \times
... \times \delta(u_3-u_2) \delta(u_2-u_1) du_1 du_2 ... du_p \\
& \ \ \ \ + \frac{ 2^{p-1} c^{p}_2(r) \times (p-1)!}{(\sqrt{T} \sqrt{2}
\|h_T\|)^{p}} \times \int_{[-T,0]^p} \delta(u_p-u_1) \times \delta(u_p -
u_{p-1}) \times ... \times \delta(u_3-u_2) \delta(u_2-u_1) du_1 du_2 ... du_p
\\
& = \frac{ 2^{p-1} (c^{p}_1(r) + c^{p}_2(r)) \times (p-1)!}{(\sqrt{T} \sqrt{2%
} \|h_T\|)^{p}} \int_{[0,T]^p} \delta(u_p-u_1) \times \delta(u_p - u_{p-1})
\times ... \times \delta(u_3-u_2) \delta(u_2-u_1) du_1 du_2 ... du_p.
\end{align*}
Using the change of variable $v_i = u_i-u_1$, $i \geq 2$, then : 
\begin{align*}
k_p(\tilde{F}_{T}) & = \frac{ 2^{p-1} (c^{p}_1(r) + c^{p}_2(r))\times (p-1)!%
}{(\sqrt{T} \sqrt{2} \|h_T\|)^{p}} \int_{0}^{T}
\int_{-u_1}^{T-u_1}...\int_{-u_1}^{T-u_1} \delta(v_p)
\delta(v_p-v_{p-1})\times ...\times \delta(v_3-v_2) dv_2 dv_3...dv_p du_1
\end{align*}
On the other hand, by dominated convergence theorem, we have 
\begin{align*}
& \frac{1}{T} \int_{-u_1}^{T-u_1}...\int_{-u_1}^{T-u_1} \delta(v_p)
\delta(v_p-v_{p-1})\times ...\times \delta(v_3-v_2) dv_2 dv_3...dv_p du_1 \\
& = \frac{1}{T} \int_{\mathbb{R}^{p-1}} \int_{0 \vee -v_2 \vee...\vee
-v_p}^{T \wedge(T-v_2) \wedge ...\wedge (T-v_p)} du_1 \delta(v_p)
\delta(v_p-v_{p-1}) ...\delta(v_{3}-v_2) \delta(v_2) dv_p ... dv_2 \mathbf{1}%
_{\{|v_p|< T, ..., |v_2|< T \}} dv_2... dv_p \\
& = \int_{\mathbb{R}^{p-1}} \delta(v_p) \delta(v_p-v_{p-1})
...\delta(v_{3}-v_2) \delta(v_2) \left[ 1 \wedge \left(1 - \frac{v_2 \vee
v_3 \vee ... \vee v_p }{T}\right) - 0 \vee \frac{v_2 \wedge v_3 \wedge...
\wedge v_p}{T} \right] \mathbf{1}_{\{|v_p|< T, ..., |v_2|< T \}} dv_2...dv_p
\\
& \underset{T \rightarrow +\infty}{\longrightarrow} \int_{\mathbb{R}^{p-1}}
\delta(v_p) \delta(v_p-v_{p-1}) ...\delta(v_{3}-v_2) \delta(v_2) dv_2...
dv_p = \langle \delta^{*(p-1)}, \delta \rangle_{\mathcal{L}^{2}(\mathbb{R})}
< + \infty.
\end{align*}
For the assertion $\langle \delta^{*(p-1)}, \delta \rangle_{\mathcal{L}^{2}(%
\mathbb{R})} < + \infty$, we need to check that the function $\delta \in 
\mathcal{L}^{\frac{p}{p-1}}(\mathbb{R})$, because in this case the $\mathcal{%
L}^{p}$ norm of the $(p-1)$ convolution is finite $\|\delta^{*(p-1)} \|_{%
\mathcal{L}^{p}(\mathbb{R})} < + \infty$. \newline
In fact by Holder's inequality then Young inequality, we get 
\begin{align*}
|\langle \delta^{*(p-1)}, \delta \rangle_{\mathcal{L}^{2}(\mathbb{R})}| &
\leqslant \| \delta \|_{\mathcal{L}^{\frac{p}{p-1}}(\mathbb{R})} \times \|
\delta^{*(p-1)}\|_{\mathcal{L}^{p}(\mathbb{R})} = \| \delta \|_{\mathcal{L}^{%
\frac{p}{p-1}}(\mathbb{R})} \times \| \delta^{*(p-2)}* \delta\|_{\mathcal{L}%
^{p}(\mathbb{R})} \\
& \leqslant \| \delta \|^2_{\mathcal{L}^{\frac{p}{p-1}}(\mathbb{R})} \times
\| \delta^{*(p-2)}\|_{\mathcal{L}^{p/2}(\mathbb{R})} \\
& \leqslant \| \delta \|^3_{\mathcal{L}^{\frac{p}{p-1}}(\mathbb{R})} \times
\| \delta^{*(p-3)}\|_{\mathcal{L}^{p/3}(\mathbb{R})} ... \leqslant \| \delta
\|^p_{\mathcal{L}^{\frac{p}{p-1}}(\mathbb{R})}
\end{align*}
It remains to check that $\delta \in \mathcal{L}^{\frac{p}{p-1}}(\mathbb{R})$%
, we have 
\begin{align*}
\| \delta \|^{p/(p-1)}_{\mathcal{L}^{\frac{p}{p-1}}(\mathbb{R})} & = \frac{1%
}{2^{\frac{p}{p-1}} \theta^{\frac{p}{p-1}}} \int_{\mathbb{R}} e^{-\frac{p}{%
p-1} \theta |u|} du = \frac{p-1}{p\times 2^{\frac{p}{2} - 2} \times \theta^{%
\frac{2p-1}{p-1}}} < + \infty.
\end{align*}

\begin{remark}
\label{neq0} Notice that the constant $c^{p}_1(r) + c^{p}_2(r) \neq 0$, $%
\forall p \geq 3$. In fact, we can easily check that : 
\begin{eqnarray}
\left\{ 
\begin{array}{ll}
c_1(r) = 0 & \iff r = -\frac{1}{\sqrt{3}} \\ 
&  \\ 
c_2(r) = 0 & \iff r = \frac{1}{\sqrt{3}}.%
\end{array}
\right.
\end{eqnarray}
It follows that :

\begin{itemize}
\item If $r = \frac{1}{\sqrt{3}}$, then $c^{p}_1(\frac{1}{\sqrt{3}}) +
c^{p}_2(\frac{1}{\sqrt{3}}) = \left(\frac{\sqrt{2}}{\sqrt{3}}\right)^{p}
\neq 0.$

\item If $r = -\frac{1}{\sqrt{3}}$, then $c^{p}_1(-\frac{1}{\sqrt{3}}) +
c^{p}_2(-\frac{1}{\sqrt{3}}) = (-1)^{p} \left(\frac{\sqrt{2}}{\sqrt{3}}%
\right)^{p} \neq 0.$
\end{itemize}
\end{remark}

Consequently, the following convergence holds: 
\begin{equation*}
k_p(\tilde{F}_{T}) \times \frac{T^{\frac{p}{2}-1} 2^{p/2} \|h_T\|^p}{2^{p-1}
(c^{p}_1(r) + c^{p}_2(r)) (p-1)! } \underset{T \rightarrow +\infty}{%
\longrightarrow} \langle \delta^{*(p-1)}, \delta \rangle_{\mathcal{L}^{2}(%
\mathbb{R})}
\end{equation*}
Finally, by Lemma \ref{norm-h}, we have : $2^{p/2}\|h_T\|^{p} \underset{%
\infty}{\sim} \frac{(1+r^2)^{p/2}}{2^{p} \theta^{3p/2}}$, the desired result
follows.
\end{proof}

\end{document}